\theoremstyle{plain}
\newtheorem{theorem}{Theorem}[section]
\newtheorem{thmx}{Theorem}
\newtheorem{proposition}[theorem]{Proposition}
\newtheorem{lemma}[theorem]{Lemma}
\newtheorem{corollary}[theorem]{Corollary}
\theoremstyle{definition}
\newtheorem{definition}[theorem]{Definition}
\newtheorem{example}[theorem]{Example}
\theoremstyle{remark}
\newtheorem{remark}[theorem]{Remark}
\renewcommand{\bar}{\overline}
\renewcommand{\vec}{\overrightarrow}
\newcommand{\dge}{\rotatebox[origin=c]{45}{$\ge$}}
\newcommand{\uge}{\rotatebox[origin=c]{315}{$\ge$}}
\newcommand{\updots}{\hbox to1.65em{\rotatebox[origin=c]{45}{$\cdots$}}}
\newcommand{\dndots}{\hbox to1.65em{\rotatebox[origin=c]{315}{$\cdots$}}}
\newcommand{\C}{\mathbb{C}}
\newcommand{\R}{\mathbb{R}}
\newcommand{\Z}{\mathbb{Z}}
\newcommand{\p}{\mathbb{P}}
\newcommand{\F}{\mathcal{F}}
\newcommand{\CP}{\mathbb{C}P}
\def\pa{\partial}
\def\mcal{\mathcal}
\def\frak{\mathfrak}
\newcommand{\vs}{\vspace}
\newcommand{\ds}{\displaystyle}
\numberwithin{equation}{section} \numberwithin{table}{section}
\begin{document}                                                                          

\title{Monotone Lagrangians in flag varieties}
\author{Yunhyung Cho}
\address{Department of Mathematics Education, Sungkyunkwan University, Seoul, Republic of Korea. }
\email{yunhyung@skku.edu}

\author{Yoosik Kim}
\address{Department of Mathematics, Brandeis University, Waltham, USA and Center of Mathematical Sciences and Applications, Harvard University, Cambridge, USA}
\email{yoosik@brandeis.edu, yoosik@cmsa.fas.harvard.edu}

%\date{\today}
%\subjclass[2010]{11F33, 11F80, 11G18}
%\keywords{Cuspidal group, Eisenstein ideals}

\begin{abstract}
In this paper, we give a formula for the Maslov index of a gradient holomorphic disc, which is a relative version of the Chern number formula of a gradient holomorphic sphere for a Hamiltonian $S^1$-action. Using the formula, we classify all monotone Lagrangian fibers of Gelfand--Cetlin systems on partial flag manifolds.
\end{abstract}
\maketitle
\setcounter{tocdepth}{1} %if you want to show more sections on the contents, change this number 1 to 2 or 3.
\tableofcontents

\section{Introduction}
\label{secIntroduction}

	Every Lagrangian submanifold $L$ in a $2n$-dimensional symplectic manifold $(M,\omega)$
	comes up with the so-called Maslov homomorphism $\mu \colon \pi_2(M,L) \to \Z$. 
	The output $\mu(\beta)$ of a homotopy class $\beta$ is called the {\em Maslov index} of $\beta$, which can be thought as a relative version of the Chern number
	of a spherical class in $\pi_2(M)$. 
	The Maslov index is particularly important in the theory of moduli spaces of pseudo-holomorphic curves from bordered Riemann surfaces as it involves the Fredholm index of 
	a linearization of the
	Cauchy--Riemann operator via the Riemann--Roch theorem.
	In particular, for the purpose of studying Lagrangian Floer theory on symplectic toric manifolds, Cho \cite{Cho} and Cho--Oh \cite{CO} introduced a formula for the Maslov index 
	of a holomorphic disc bounded by a Lagrangian toric fiber. 
	The Maslov index is twice the intersection number of the disc and the toric anti-canonical divisor. 
	In a more general context, Auroux \cite{Aur} derived a Maslov index formula 
	for a special Lagrangian submanifold
	in the complement of an anti-canonical divisor of a K\"{a}hler manifold. 
	Those formulae are crucially used for classifying the holomorphic discs, which leads to a mirror Landau--Ginzburg 
	model arising from deformations of Floer theory in Fukaya--Oh--Ohta--Ono
	 \cite{FOOO, FOOOToric1}.
	
	The first goal of this paper is to deduce a formula for the Maslov index of a {\em gradient disc}, which is an analogue of a {\em gradient sphere} in Karshon \cite{Ka}, 
	bounded by an $S^1$-invariant Lagrangian submanifold 
	in a symplectic manifold admitting a Hamiltonian $S^1$-action. 
	
	\begin{thmx}[Theorem \ref{theorem_Maslov_index_formula}]\label{thmx_Maslovindex}
		Let $(M,\omega)$ be a $2n$-dimensional symplectic manifold equipped with an effective Hamiltonian $S^1$-action with a moment map 
		$H \colon M \rightarrow \R$. Suppose that $L$ is an $S^1$-invariant Lagrangian 
		submanifold of $(M,\omega)$ lying on some level set of $H$. 
		For any gradient holomorphic disc $u \colon (\mathbb{D}, \partial \mathbb{D}) \rightarrow (M, L)$, we then have 
		\[
			\mu([u]) = -2n_z
		\]	
		where $n_z$ is the sum of weights at the unique fixed point $z$ in $u(\mathbb{D})$.
		In particular, if the action is semifree and $H(z)$ is the maximum, then the Maslov index $\mu([u])$ equals the codimension of the maximal fixed component of the action.
	\end{thmx}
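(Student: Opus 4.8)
\emph{Proof idea.} The plan is to exploit the $S^1$-equivariance built into a gradient disc and reduce the computation of $\mu([u])$ to a count of isotropy weights at $z$. First I would recall the defining features of a gradient holomorphic disc (taken with respect to an $S^1$-invariant $\omega$-compatible almost complex structure): $u(0)=z$ is the unique fixed point in $u(\mathbb{D})$, the boundary curve $u(\partial\mathbb{D})$ is a single $S^1$-orbit contained in $L$, and $u$ is $S^1$-equivariant. With the orientation conventions in force the intertwining has degree $-1$, i.e.\ $u(\zeta\cdot w)=\zeta^{-1}\cdot u(w)$ for $\zeta\in S^1$, $w\in\mathbb{D}$; this is the only place where the global sign enters, so verifying it (equivalently: that near $z$ the weights in the directions tangent to $u(\mathbb{D})$ are negative, which is automatic once $z$ is a maximal‑type fixed point of $H$ along the disc) is the first task.

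Next I would upgrade $u^{*}TM\to\mathbb{D}$ to an $S^1$-equivariant Hermitian vector bundle via a pulled-back invariant metric. Since $\mathbb{D}$ equivariantly deformation retracts onto the fixed point $0$ (through $w\mapsto tw$), this bundle is equivariantly unitarily isomorphic to the trivial bundle $\mathbb{D}\times T_zM$, on which $\zeta\in S^1$ acts by $(w,v)\mapsto(\zeta w,\rho(\zeta)v)$ with $\rho$ the isotropy representation on $T_zM$. Choosing an invariant compatible complex structure on $T_zM$ and decomposing $T_zM=\bigoplus_{j=1}^{n}\C_{w_j}$ into weight lines, we have $\rho(\zeta)=\operatorname{diag}(\zeta^{w_1},\dots,\zeta^{w_n})$ and $n_z=\sum_{j=1}^{n}w_j$.

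By definition, $\mu([u])$ is the Maslov index of the loop of Lagrangian subspaces $\theta\mapsto(u^{*}TL)_{e^{i\theta}}\subset(u^{*}TM)_{e^{i\theta}}$ read off in this unitary trivialization. Because $u(\partial\mathbb{D})$ is one $S^1$-orbit inside the invariant Lagrangian $L$ and $u$ is equivariant of degree $-1$, this loop is $\theta\mapsto\rho(e^{-i\theta})\Lambda$ for the fixed Lagrangian plane $\Lambda:=(u^{*}TL)_{1}\subset T_zM$. Writing $\Lambda=U_0\,\R^{n}$ with $U_0\in U(n)$ and using the squared-determinant description of the Maslov index,
\[
\mu([u])=\deg\!\Big(\theta\mapsto\det\big(\rho(e^{-i\theta})U_0\big)^{2}\Big)=\deg\!\Big(\theta\mapsto e^{-2i(\sum_{j}w_j)\theta}\det(U_0)^{2}\Big)=-2\sum_{j=1}^{n}w_j=-2n_z .
\]

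For the last assertion, assume the action is semifree and $H(z)$ is the maximum. Semifreeness forces $w_j\in\{-1,0,1\}$, and maximality of $H$ at $z$ rules out positive weights; the zero-weight subspace of $T_zM$ is exactly $T_zF$ for the fixed component $F$ through $z$. Hence $-n_z=\#\{j:w_j=-1\}=\tfrac12\dim_{\mathbb{R}}(T_zM/T_zF)=\tfrac12\operatorname{codim}F$, so $\mu([u])=-2n_z=\operatorname{codim}F$. I expect the only real obstacle to be the first step: isolating the clean equivariant normal form of a gradient disc, in particular pinning down the degree $-1$ of the intertwining, and checking that the equivariant trivialization of $u^{*}TM$ may be taken unitary so that it genuinely computes the Maslov index; granting that, the remaining weight computation is immediate. (One could instead obtain the formula as the relative analogue of the Chern-number formula for a gradient holomorphic sphere, cf.\ Karshon \cite{Ka}, by a doubling argument, but the direct equivariant calculation above is the most economical.)
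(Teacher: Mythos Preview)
Your approach is correct and takes a genuinely different, more direct route than the paper. The paper proves the formula by applying Lerman's symplectic cut at a level just below $L$, thereby producing a gradient sphere $\widehat{u}$ in the cut space that decomposes into your disc $u$ and a second gradient disc $u_-$ hitting the new codimension-two minimal fixed component; the authors then compute $c_1([\widehat{u}])$ via the Ahara--Hattori weight formula for equivariant bundles over $S^2$ (Lemma~3.5), establish $\mu([u_-])=2$ by an explicit local model (Lemma~3.3, which in turn relies on a Lagrangian-isotopy invariance argument, Lemmas~3.1--3.2), and recover $\mu([u])$ from $2c_1=\mu([u])+\mu([u_-])$. Your argument bypasses all of this by trivializing $u^{*}TM$ equivariantly and unitarily and reading off the boundary Lagrangian loop as $\theta\mapsto\rho(e^{-i\theta})\Lambda$; this is more elementary and makes the answer transparently a weight sum. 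The paper's detour, on the other hand, makes explicit the analogy with the gradient-sphere Chern-number formula (the stated motivation) and develops along the way the Lagrangian-isotopy machinery (Lemmas~3.1--3.2) that is reused in Section~5. You have correctly isolated the two points needing care: pinning down the sign of the intertwining degree (this is dictated by the conformal identification of $\R_{\ge 0}\times S^1$ with $\mathbb{D}\setminus\{0\}$ together with the orientation conventions of Lemma~2.4) and ensuring the equivariant trivialization is unitary (use parallel transport along radii for an $S^1$-invariant Hermitian connection, or apply equivariant polar decomposition).
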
 
	
	This formula can be understood as a relative version of the Chern number formula of a gradient sphere in Ahara--Hattori \cite{AH}. Indeed, we use Lerman's symplectic cut \cite{L} to reduce our  
	gradient disc to the gradient sphere. 	
	
%	Theorem~\ref{thmx_Maslovindex} can be typically applied to calculate the Maslov index of gradient discs bounded by a torus in 
%	Lagrangian torus fibrations constructed by collecting torus orbits coming from a Hamiltonian torus action, see \cite{Gr, Aur, CLL, La, AAK, CPU} for instance.	
%	For instance, a semi-toric system in Pelayo-V{\~u} \cite{PV} is a completely integrable system such that at least one component generates a Hamiltonian $S^1$-action 
%	(with some additional condition on singularities of the system). Moreover, to do the Strominger-Yau-Zaslow mirror symmetry, one needs a (special) Lagrangian torus fibration (possibly with singular fibers)
%	 on a $2n$-dimensional symplectic manifold. Fibrations usually come with a Hamiltonian $T^{\leq n}$-symmetry, see \cite{Gr, Aur, La, CPU, AAK} for instance.
	It is worth mentioning that Theorem \ref{thmx_Maslovindex} can be applied to a symplectic manifold having a locally defined Hamiltonian $S^1$-action. 
	More precisely, if a Hamiltonian $S^1$-action is defined on an open subset $U$ of $(M,\omega)$, then one can apply Theorem \ref{thmx_Maslovindex} 
	to a gradient disc as long as the image of the disc is fully contained in $U$. It exactly fits into the situation of \emph{Gelfand--Cetlin systems} on partial flag manifolds for instance. 
	
	A {\em Gelfand--Cetlin system}, or shortly a {\em GC system}, 
	is a completely integrable system on a partial flag manifold constructed by Guillemin and Sternberg \cite{GS}. 
	The image is a convex polytope 
	$\Delta$, which is called a {\em Gelfand--Cetlin polytope}, or a {\em GC polytope} for short.
	As the big torus action does not extend to the ambient manifold, non-torus Lagrangian fibers can appear over a lower dimensional face of the polytope $\Delta$. 
 	For the case of partial flag manifolds of type A, the authors with Oh \cite{CKO1} locate the non-torus GC fibers and describe their topology. 
	One consequence of \cite[Theorem A]{CKO1} is that the fiber over a point in the relative interior of a face $f$ is Lagrangian if and only if the fiber over any point in the relative interior of 
	$f$ is also Lagrangian. 
 	In this regard, a face $f$ of $\Delta$ is said to be \emph{Lagrangian} if one fiber over its relative interior point is Lagrangian (and hence all). 
 	
 	Using the formula, we classify all \emph{monotone} Lagrangian fibers of GC systems. A Lagrangian submanifold is said to be \emph{monotone} if the symplectic area of discs are positively
 	 proportional to their Maslov index, that is, for some positive real number $c > 0$, 
 	 \[
 	 	\omega(\beta) = c \cdot \mu(\beta), \quad \beta \in \pi_2(M,L).
 	 \] 
 	 The notion of monotone Lagrangian submanifolds was introduced by Oh \cite{Oh} 
 	 as a nice condition for constructing Lagrangian Floer homology. 
 	 Our second main theorem states that the GC fiber at the {\em center} \footnote{See~\eqref{equ_aibi} in Section  \ref{secClassificationOfMonotoneLagrangianFibers} for precise description of the center of a Lagrangian face.} of a Lagrangian face is monotone and each monotone Lagrangian GC fiber is located at the center of a Lagrangian face.

	\begin{thmx}[Theorem \ref{theorem_main}]\label{thmx_main}
		Consider a partial flag manifold equipped with a monotone\footnote{A symplectic form $\omega$ on $M$ is called {\em monotone} if the cohomology class
		$[\omega] \in H^2(M;\R)$ is positively proportional to $c_1(TM)$ with respect to some (any) $\omega$-compatible almost complex structure on $M$, that is, $c_1(TM) = c \cdot [\omega]$ for 
		some $c > 0$. See Section~\ref{ssecMonotoneLagrangians}.}
		Kirillov--Kostant--Souriau symplectic form
		and let $\Delta$ be the corresponding GC polytope. 
		For a point $\textbf{\textup{u}} \in \Delta$, the fiber of the GC system at $\textbf{\textup{u}}$ is monotone Lagrangian if and only if $\textbf{\textup{u}}$ is the center of a Lagrangian face of $\Delta$.
	\end{thmx}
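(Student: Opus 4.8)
\medskip
\noindent\emph{Sketch of a proof.} The plan is to reduce monotonicity of a Gelfand--Cetlin fiber over a Lagrangian face $f$ to an explicit affine condition on its position $\mathbf{u}\in\mathrm{relint}(f)$, and to verify that condition by computing the Maslov indices of the discs the fiber bounds via Theorem~\ref{thmx_Maslovindex}. Fix a Lagrangian face $f$ of $\Delta$ and, for $\mathbf{u}\in\mathrm{relint}(f)$, let $L_{\mathbf{u}}$ denote the GC fiber over $\mathbf{u}$; by \cite[Theorem A]{CKO1} it is Lagrangian and its fundamental group is free abelian, so $\pi_2(M,L_{\mathbf{u}})$ is free of finite rank. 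By definition $L_{\mathbf{u}}$ is monotone if and only if $\omega(\beta)=c\,\mu(\beta)$ for some $c>0$ and all $\beta$ in a generating set of $\pi_2(M,L_{\mathbf{u}})$. Since $(M,\omega)$ is monotone with constant $c_0>0$ and $M$ is Fano, the nontrivial image of $\pi_2(M)\to\pi_2(M,L_{\mathbf{u}})$ already satisfies $\omega=c_0\mu$, which forces $c=c_0$; it therefore suffices to control the ``basic'' disc classes.

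Next, using the description of $L_{\mathbf{u}}$ and of its bounding holomorphic discs in \cite{CKO1} --- equivalently, action--angle coordinates near $L_{\mathbf{u}}$ together with the toric degeneration of $M$ to the Fano toric variety of $\Delta$ --- I would fix a generating set of $\pi_2(M,L_{\mathbf{u}})$ consisting of the spherical classes above and basic disc classes $\beta_F$ indexed by the facets $F$ of $\Delta$ with $f\not\subseteq F$, whose boundaries $\partial\beta_F$ generate $\pi_1(L_{\mathbf{u}})$ and whose symplectic areas are the affine distances $\omega(\beta_F)=\ell_F(\mathbf{u})>0$, where $\ell_F$ is the primitive integral affine function cutting out $F$ and positive on $\mathrm{int}\,\Delta$. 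Consequently $L_{\mathbf{u}}$ is monotone if and only if $\ell_F(\mathbf{u})=c_0\,\mu(\beta_F)$ for every facet $F$ with $f\not\subseteq F$.

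The crux is to show $\mu(\beta_F)=2$ for each such $F$. When $\beta_F$ stays in the residual toric directions this is the formula of Cho--Oh. Otherwise $\beta_F$ runs into the non-torus part of $L_{\mathbf{u}}$, and here Theorem~\ref{thmx_Maslovindex} enters: the GC construction equips a neighborhood $U$ of the disc's image with a locally defined Hamiltonian $S^1$-action --- one of the circle subactions attached to a box of the ladder diagram that stays nondegenerate along the disc --- for which $L_{\mathbf{u}}$ is invariant and $\beta_F$ admits a gradient holomorphic representative, so by the Remark following Theorem~\ref{thmx_Maslovindex} we get $\mu(\beta_F)=-2n_z$, and a weight count at the limiting fixed point, read off the ladder diagram, yields codimension $2$. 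When no single local action serves, one splits $\beta_F$ modulo spherical classes into discs to which the above applies, using that $\mu$ is a homomorphism. I expect this to be the main obstacle: one must choose, disc by disc, the correct locally defined circle action, check that the entire disc image lies in its domain of definition, and run the weight computation along the ladder.

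Finally, by the previous steps $L_{\mathbf{u}}$ is monotone if and only if $\mathbf{u}\in\mathrm{relint}(f)$ solves the affine system $\ell_F(\mathbf{u})=2c_0$ over all facets $F$ with $f\not\subseteq F$. As $M$ is monotone the polytope $\Delta$ is reflexive, and the combinatorial characterization of Lagrangian faces in \cite{CKO1} is exactly what makes this system consistent; reflexivity then pins its solution down uniquely inside $\mathrm{relint}(f)$, and a direct computation identifies it with the point given by \eqref{equ_aibi}, namely the center of $f$. Conversely every such center arises in this way, while over a face that is not Lagrangian the GC fiber fails to be Lagrangian, so no further point $\mathbf{u}$ occurs. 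This establishes the theorem.
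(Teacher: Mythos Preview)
Your outline has the right overall shape---reduce monotonicity to an affine condition on $\mathbf u$ by computing Maslov indices of basic discs via Theorem~\ref{thmx_Maslovindex}---but the crux step contains a genuine gap that changes the whole computation.

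You index the basic discs by facets $F$ with $f\not\subseteq F$ and assert $\mu(\beta_F)=2$, arguing that ``one of the circle subactions attached to a box of the ladder diagram'' is defined near the disc and has a codimension-two fixed set. This is exactly where the non-torus case bites: when $\mathbf u$ lies in a proper Lagrangian face, some eigenvalues of a principal submatrix coincide, i.e.\ $\Phi_\lambda^{i,j}(\mathbf u)=\Phi_\lambda^{i+1,j-1}(\mathbf u)$, and at such points the individual component $\Phi_\lambda^{i,j}$ is \emph{not smooth} (Corollary~\ref{corollary_smooth} only guarantees smoothness on $\mcal U_\lambda^{i,j}=\mcal U_{\lambda,+}^{i,j}\cap\mcal U_{\lambda,-}^{i,j}$). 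So the circle action you want to invoke simply does not exist on the Lagrangian, and Theorem~\ref{thmx_Maslovindex} cannot be applied to it. The paper's fix is to replace $\Phi_\lambda^{i,j}$ by the \emph{partial trace} $\Psi_\lambda^{a,b}=\Phi_\lambda^{1,a+b-1}+\cdots+\Phi_\lambda^{a,b}$, which \emph{is} smooth on the larger open set $\mcal U_{\lambda,-}^{a,b}$ containing the fiber (Proposition~\ref{proposition_smooth_Ham}, Lemma~\ref{lemma_smooth_on_Lag}). But now the maximal fixed component $Z_\lambda^{a,b}$ has large codimension---explicitly $2\bigl(\sum_{i=1}^a\lambda_i-\Psi_\lambda^{a,b}(\mathbf u_{\Delta_\lambda})\bigr)$ by Lemma~\ref{lemma_codim}---so the Maslov index of the gradient disc is \emph{not} $2$ in general (in Example~\ref{example_codim} it is $28$). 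Consequently your affine system ``$\ell_F(\mathbf u)=2c_0$ for all $F\not\supseteq f$'' is not the right one.

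What actually happens: the relevant discs are indexed not by facets but by the $k=\dim f$ upper-rightmost vertices $(a_i,b_i)$ of the bounded regions of $\gamma_f$; the corresponding free $\Psi_\lambda^{a_i,b_i}$-orbits generate $\pi_1(L_{\mathbf u})\cong\Z^k$ (Lemma~\ref{lemma_fundamental_group}). After checking semifreeness near $Z_\lambda^{a_i,b_i}$ (Proposition~\ref{proposition_semifreee}, nontrivial and done via a toric degeneration and a combinatorial weight computation at a ``comb-shaped'' vertex), Corollary~\ref{corollary_Maslov_index_formula} gives $\mu=\mathrm{codim}\,Z_\lambda^{a_i,b_i}$ while the area is $c_i-\Psi_\lambda^{a_i,b_i}(\mathbf u)$. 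The monotonicity condition becomes $\Psi_\lambda^{a_i,b_i}(\mathbf u)=\Psi_\lambda^{a_i,b_i}(\mathbf u_{\Delta_\lambda})$ for all $i$, and Lemma~\ref{lemma_charpartialtraces} identifies this with $\mathbf u$ being the center of $f$. So the missing ingredients in your sketch are: (i) the passage from $\Phi_\lambda^{i,j}$ to partial traces to restore smoothness; (ii) the codimension and semifreeness analysis of the partial-trace action; and (iii) the characterization of the center via partial traces rather than facet distances.
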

		
	As a special case of Theorem \ref{thmx_main}, it immediately follows that 
	the GC torus fiber at $\textbf{\textup{u}}$ is monotone if and only if $\textbf{\textup{u}}$ is the center of $\Delta$ since $\Delta$ is itself the (unique) improper Lagrangian face of $\Delta$. 
	
	We hope that our classification of monotone Lagrangian GC fibers would serve as the base step toward understanding the (monotone) Fukaya category and mirror symmetry of partial flag varieties.  
	In the light of the work of Nohara--Ueda \cite{NU2} and Evans--Lekili \cite{EL} proving that certain monotone GC fibers (together with deformation data) in certain Grassmannians split-generates the Fukaya category, monotone 
	Lagrangian GC fibers are candidates for non-zero objects of the Fukaya category over a ring (with certain characteristic). 
	Also, a preferred Landau--Ginzburg mirror constructed by Rietsch \cite{Ri} in the setting of closed mirror symmetry is defined on a partial compactification of algebraic torus together with a holomorphic function on it. 
	Non-torus monotone Lagrangians are presumably in charge of the partial compactification of the Landau--Ginzburg mirror of torus fiber.
		
	The paper is organized as follows. 
	In Section~\ref{secGradientJHolomorphicDisks}, we define a gradient holomorphic disc generated by a Hamiltonian $S^1$-action. 
	Section~\ref{secMaslovIndicesAndChernNumbers} discusses the Maslov index for gradient discs and proves Theorem \ref{thmx_Maslovindex}.
	In Section~\ref{secGelfandCetilnSystems}, we review GC systems and recall some results in \cite{CKO1}.
	Section~\ref{secClassificationOfMonotoneLagrangianFibers} is devoted to classifying the monotone Lagrangian GC fibers and to proving Theorem~\ref{thmx_main}.

\subsection*{Acknowledgements} 
The authors would like to thank Cheol-Hyun Cho, Yong-Geun Oh, Kaoru Ono for helpful discussions and the anonymous referees for their detailed comments which helped to improve the manuscript.
The first author is supported by the National Research Foundation of Korea(NRF) grant funded by the Korea government(MSIP; Ministry of Science, ICT \& Future Planning) (NRF-2017R1C1B5018168). This project was initiated when two authors were affiliated to IBS-CGP and were supported by IBS-R003-D1.

%--------------------------------------------------------------------------------------------------------
\section{Gradient $J$-holomorphic discs}
\label{secGradientJHolomorphicDisks}

In this section, we introduce the notion of a {\em gradient disc}, an analogue of a gradient sphere (cf. \cite{AH, Au, Ka}), in a Hamiltonian $S^1$-manifold. 

Let $(M,\omega)$ be a symplectic manifold. Assume that the unit circle group $S^1$ acts effectively on $M$ and denote by $\xi$ the vector field on $M$ generated by the $S^1$-action.
The action is said to be {\em Hamiltonian} if 
\begin{equation}\label{equation_Hamiltonian}
	\iota_\xi \omega = -dH
\end{equation}	
for some smooth function $H$ on $M$. Such a function $H$ is called a {\em moment map} for the $S^1$-action, 
or a {\em periodic Hamiltonian}. It is an immediate consequence that $x \in M$ is a critical point of $H$ if and only if it is a fixed point of the $S^1$-action.  
(See also \cite[Remark II.3.1]{Au}.) We also have the following.

\begin{lemma}[Proposition 2.9 in \cite{GGK}]\label{lemma_fixedpoint_criticalpoint_equal}
	A moment map $H$ is constant on any $S^1$-orbit. 
\end{lemma}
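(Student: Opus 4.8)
The plan is to show that the differential of $H$ annihilates the orbit directions, from which constancy of $H$ on each orbit follows at once by integrating.

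First I would contract the defining relation \eqref{equation_Hamiltonian} with the generating vector field $\xi$ itself: pairing $\iota_\xi\omega = -dH$ with $\xi$ gives $\omega(\xi,\xi) = -dH(\xi)$, and since $\omega$ is skew-symmetric the left-hand side vanishes identically. Hence $dH(\xi) \equiv 0$ on all of $M$.

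Next I would translate this pointwise vanishing into the claimed global statement. Any $S^1$-orbit is the image of an integral curve $\gamma(t) = \phi_t(x)$ of $\xi$, where $\{\phi_t\}$ denotes the (periodic) flow generating the action; along such a curve $\frac{d}{dt}\, H(\gamma(t)) = dH_{\gamma(t)}\bigl(\xi_{\gamma(t)}\bigr) = 0$ by the first step, so $H \circ \gamma$ is constant. Therefore $H$ takes a single value on each orbit. There is no genuine obstacle here; the only point worth recording is that a Hamiltonian $S^1$-action supplies a globally defined, complete generating vector field whose integral curves are exactly the orbits, so the infinitesimal computation really does control the behavior of $H$ along whole orbits. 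Equivalently, one may phrase the middle step as $\mathcal{L}_\xi H = dH(\xi) = 0$ and conclude that $H$ is invariant under the connected group $S^1$.
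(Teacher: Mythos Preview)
Your proof is correct and is the standard argument. The paper does not supply its own proof of this lemma at all; it simply cites it as Proposition~2.9 in \cite{GGK}, so your self-contained argument via $dH(\xi)=\omega(\xi,\xi)=0$ and integration along orbits adds content rather than duplicating anything in the text.
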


Let $J$ be an $S^1$-invariant $\omega$-compatible almost complex structure on $(M,\omega)$. 
Note that such a $J$ always exists, see \cite[II.2, IV.1.b]{Au} for instance.
With respect to the Riemannian metric $g_J(\cdot, \cdot) := \omega(J\cdot, \cdot)$,
the gradient vector field $\nabla H$ of $H$ is characterized by 
\begin{equation}\label{equation_nabla}
	g_J(\nabla H, Y) = dH(Y), \quad \text{or equivalently,} \quad \omega(J\nabla H, Y) = -\omega(\xi, Y)
\end{equation}
for every vector field $Y$ on $M$. Therefore we have $\nabla H = J\xi$. 
Assuming the completeness of the vector field $J\xi$ on $M$,
the one-parameter subgroup action
\begin{equation}\label{equation_gradient_flow}
	\begin{array}{cccl}
		\gamma \colon   & \R \times M & \rightarrow & M \\
				     &  (s, q) & \mapsto & \gamma_s(q)
	\end{array}
\end{equation}
is well-defined on $\R$ where $\gamma_s$ is an integral curve of $J\xi$ defined by the following differential equation
\[
\begin{cases}
	\frac{d}{ds} \gamma_s(q)  = J\xi(\gamma_s(q))  \\
	\gamma_0(q) = q
\end{cases}
\]
for every $q \in M$.

Now, pick a point $p \in M$ whose stabilizer is the trivial subgroup of $S^1$ and let $\sigma_p := S^1 \cdot p$ be the free $S^1$-orbit containing $p$. 
Since $J$ and $\xi$ are both $S^1$-invariant, so is $J\xi$ and therefore we can define a map 
\begin{equation}\label{equation_u}
	\begin{array}{cccl}
		u \colon  & \R_{\geq 0} \times S^1 & \rightarrow & M \\
		     &  (s, t) & \mapsto & \gamma_s(t \cdot p),
	\end{array} 
\end{equation} 
whose image consists of $S^1$-orbits.

\begin{lemma}\label{lemma_gradientflow_converge}
	Suppose that $\gamma_s(p)$ converges to some fixed point $z_p \in M^{S^1}$ as $s$ goes to infinity.
	Then every point in $\sigma_p$ converges to the point $z_p$ along $\gamma_s$. In other words, 
	\[
		\lim_{s \rightarrow \infty} \gamma_s(q) = z_p
	\] for every $q \in \sigma_p$.
\end{lemma}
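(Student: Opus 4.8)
The plan is to exploit the $S^1$-equivariance of the gradient flow $\gamma_s$, which follows from the $S^1$-invariance of the vector field $J\xi$. Since $J$ and $\xi$ are both $S^1$-invariant, for every $t \in S^1$ the diffeomorphism $q \mapsto t \cdot q$ commutes with the flow: $\gamma_s(t \cdot q) = t \cdot \gamma_s(q)$ for all $s \in \R$ and $q \in M$. This is the key identity, and I would establish it first by noting that $t \cdot \gamma_s(q)$ satisfies the same ODE $\tfrac{d}{ds}(t\cdot\gamma_s(q)) = (t)_*\bigl(J\xi(\gamma_s(q))\bigr) = J\xi(t \cdot \gamma_s(q))$ with the same initial condition $t \cdot q$ at $s = 0$, so by uniqueness of integral curves the two curves coincide.

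Next I would use this to transport the convergence at $p$ to convergence at an arbitrary $q \in \sigma_p$. Write $q = t \cdot p$ for some $t \in S^1$. Then for each $s$ we have $\gamma_s(q) = \gamma_s(t \cdot p) = t \cdot \gamma_s(p)$. Taking $s \to \infty$ and using continuity of the action map $S^1 \times M \to M$ in the second variable together with the hypothesis $\gamma_s(p) \to z_p$, we get
\[
	\lim_{s \to \infty} \gamma_s(q) = \lim_{s \to \infty} t \cdot \gamma_s(p) = t \cdot z_p.
\]
It then remains to observe that $t \cdot z_p = z_p$: since $z_p \in M^{S^1}$ is a fixed point of the $S^1$-action, it is fixed by every element of $S^1$, in particular by $t$. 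Hence $\lim_{s \to \infty}\gamma_s(q) = z_p$, as claimed.

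There is essentially no hard step here; the only point requiring a little care is the justification of the equivariance identity $\gamma_s(t\cdot q) = t \cdot \gamma_s(q)$ via uniqueness of solutions to the defining ODE, which in turn relies on the completeness assumption already in force (so that $\gamma_s$ is globally defined) and on $(t)_* (J\xi) = J\xi$. I do not anticipate any genuine obstacle, but I would be careful to state explicitly that continuity of $q \mapsto t \cdot q$ is what lets us pass the limit through the action, and that the defining property of a fixed point in $M^{S^1}$ is precisely invariance under all group elements.
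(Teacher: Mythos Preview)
Your proof is correct and is essentially the same as the paper's: both use that $\gamma_s$ commutes with the $S^1$-action (because $J\xi$ is $S^1$-invariant) to transport the convergence at $p$ along the orbit, and both finish using that $z_p$ is fixed by $S^1$. The only cosmetic difference is that the paper picks $t$ with $t\cdot q = p$ and concludes $t\cdot\gamma_s(q)\to z_p$, whereas you write $q = t\cdot p$ and conclude $\gamma_s(q)\to t\cdot z_p$; your version is in fact slightly more explicit about the ODE-uniqueness justification and the use of $z_p\in M^{S^1}$.
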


\begin{proof}
	Fix $q \in \sigma_p$. 
	Since the one-parameter group action $\gamma$ commutes with the $S^1$-action, we have 
	\[
		\lim_{s \rightarrow \infty} t \cdot \gamma_s(q) = \lim_{s \rightarrow \infty} \gamma_s(t \cdot q) = \lim_{s \rightarrow \infty} \gamma_s(p) = z_p.
	\]
	where $t \in S^1$ such that $t \cdot q = p$. 
\end{proof}

\begin{lemma}\label{lemma_holomorphic_cylinder}
	The map $u$ in \eqref{equation_u} is $(j,J)$-holomrophic where $j$ is the almost complex structure on $\R \times S^1$ given by
	\[
		j \cdot \frac{\partial}{\partial t} = \frac{\partial}{\partial s}, \quad j \cdot \frac{\partial}{\partial s} = -\frac{\partial}{\partial t}.
	\]
\end{lemma}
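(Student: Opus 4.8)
The plan is to verify the Cauchy--Riemann equation $du \circ j = J \circ du$ directly by computing the two partial derivatives $\partial u/\partial s$ and $\partial u/\partial t$ and comparing them via $J$. First I would fix a point $(s,t) \in \R \times S^1$ and compute $\partial u/\partial s$ at that point: since $u(s,t) = \gamma_s(t\cdot p)$ and $\gamma_s$ is by definition the flow of the vector field $J\xi$, we get $\frac{\partial u}{\partial s}(s,t) = J\xi(u(s,t))$. Next I would compute $\partial u/\partial t$. Here the key observation is that the $t$-variable is moved by the $S^1$-action: writing $u(s,t) = \gamma_s(t\cdot p)$ and using that the flow $\gamma$ of $J\xi$ commutes with the $S^1$-action (because $J\xi$ is $S^1$-invariant, as noted just before \eqref{equation_u}), we have $u(s,t) = t \cdot \gamma_s(p)$, whence differentiating in $t$ gives $\frac{\partial u}{\partial t}(s,t) = \xi(u(s,t))$, the value of the generating vector field at $u(s,t)$.

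With these two identities in hand, the verification is immediate: from the definition of $j$ we have $du\big(j \cdot \tfrac{\partial}{\partial t}\big) = du\big(\tfrac{\partial}{\partial s}\big) = J\xi(u) = J\big(\xi(u)\big) = J\,du\big(\tfrac{\partial}{\partial t}\big)$, and similarly $du\big(j \cdot \tfrac{\partial}{\partial s}\big) = du\big(-\tfrac{\partial}{\partial t}\big) = -\xi(u) = J\big(J\xi(u)\big) = J\,du\big(\tfrac{\partial}{\partial s}\big)$, using $J^2 = -\mathrm{id}$ in the last equality. Since $\{\partial/\partial s, \partial/\partial t\}$ spans the tangent space of $\R \times S^1$ at every point, this shows $du \circ j = J \circ du$ everywhere, which is precisely the statement that $u$ is $(j,J)$-holomorphic.

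The only genuinely non-routine point is the identity $\frac{\partial u}{\partial t} = \xi(u)$, which rests on the commutation of the gradient flow $\gamma_s$ with the circle action; I would make sure to state explicitly that this commutation is what lets us rewrite $\gamma_s(t\cdot p)$ as $t\cdot\gamma_s(p)$, so that the $t$-derivative is exactly the infinitesimal generator $\xi$ evaluated along $u$. Everything else is a one-line matching of basis vectors through $J$.
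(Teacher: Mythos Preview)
Your proof is correct and follows essentially the same approach as the paper: both compute $\partial u/\partial s = J\xi(u)$ from the definition of $\gamma_s$ as the flow of $J\xi$, compute $\partial u/\partial t = \xi(u)$ using the $S^1$-equivariance of the gradient flow, and then match basis vectors through $j$ and $J$. In fact you make the commutation step $\gamma_s(t\cdot p) = t\cdot\gamma_s(p)$ more explicit than the paper does, which is helpful.
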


\begin{proof}
	Comparing 
	\[
		J\xi\left(u(s_0,t_0) \right) = J\xi(\gamma_{s_0}(t_0 \cdot p))
		= \left.\frac{\partial}{\partial s}\right|_{s=s_0} \gamma_s(t_0 \cdot p) 
		= \frac{\partial u}{\partial s}(s_0, t_0) = du_{(s_0,t_0)} \left(\frac{\partial}{\partial s} \right) = du \left( j \cdot \frac{\partial}{\partial t} \right)
	\] with 
	\[
		\xi \left(u(s_0, t_0) \right) = \xi(\gamma_{s_0}(t_0 \cdot p)) = \left.\frac{\partial}{\partial t}\right|_{t=t_0} \gamma_{s_0} (t \cdot p) 
		= \frac{\partial u}{\partial t}(s_0, t_0) = du_{(s_0,t_0)} \left(\frac{\partial}{\partial t} \right) = du \left(\frac{\partial}{\partial t} \right),
	\]
	we obtain 
	\[
		du \left( j \cdot \frac{\partial}{\partial t} \right) = J \circ du \left(\frac{\partial}{\partial t} \right). 
	\] 
	Similarly, we see that  
	$ du \left( j \cdot \frac{\partial}{\partial s} \right) = J \circ du \left(\frac{\partial}{\partial s} \right)$ and this completes the proof.
\end{proof}

Recall that the half-infinite cylinder $\R_{\geq 0} \times S^1$ is conformally equivalent to the punctured unit disc $\mathbb{D} \setminus \{0\}$ in $\C$. 
If $\gamma_s(p)$ converges to some fixed point $z_p$,
by Lemma~\ref{lemma_gradientflow_converge}, the map $u$ can be extended to a continuous map $\widetilde{u} \colon \mathbb{D} \rightarrow M$. Furthermore, the $L^2$-norm (called the {\em energy of $u$}) of the derivative of a $J$-holomorphic curve satisfies 
\[
	\int_\mathbb{D} ||du||^2 = \int_\mathbb{D} u^*\omega = H(z_p) - H(p),
\]
where the latter equality is known to Archimedes. 
(See \cite[Lemma 4.1.2]{MS2} and \cite[Corollary 30.4]{Ca}.)
In particular, since the energy of $u$ is bounded (uniformly), we apply the Riemann extension theorem to confirm that $u$ can be extended to a $J$-holomorphic map $\widetilde{u} \colon \mathbb{D} \rightarrow M$.

\begin{definition}\label{definition_gradient_holo_disk}
	The extended map $\widetilde{u} \colon \mathbb{D} \rightarrow M$ is called a {\em gradient $J$-holomorphic disc} 
	of an $S^1$-orbit $\sigma_p$ and is denoted by $u_p^J$.
\end{definition}

\vspace{-0.5cm}
	\begin{figure}[H]
		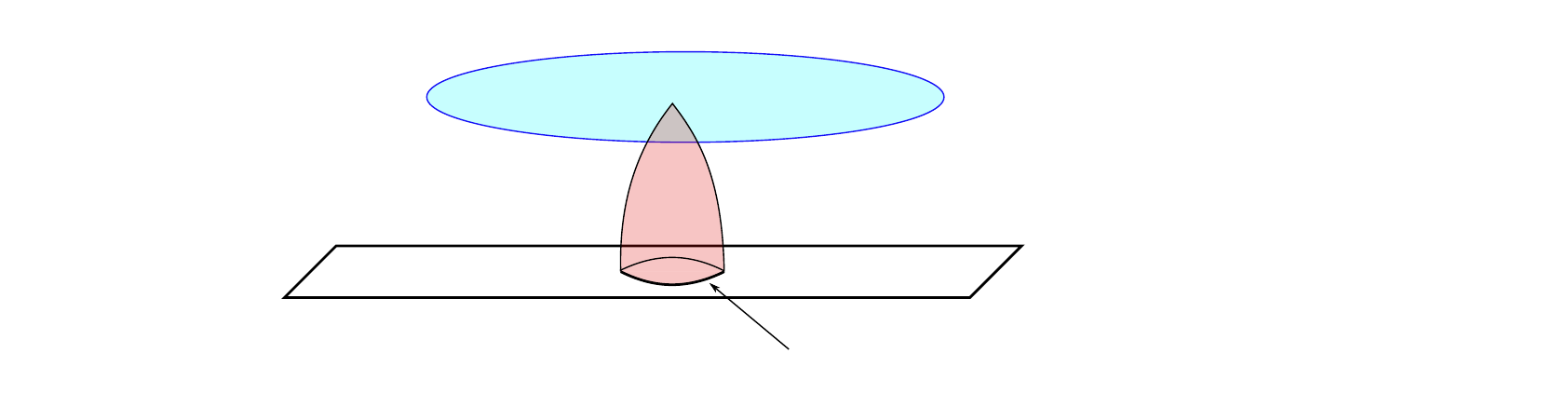
		\caption{\label{figure_grad_hol_disk} Gradient holomorphic disc}
	\end{figure}
\vspace{-0.5cm}

\begin{remark}
The map $u$ in~\eqref{equation_u} cannot be only defined on $\R_{\geq 0} \times S^1$, but defined on the infinite cylinder $\R \times S^1$ by considering the gradient flow with respect to $g_{-J}$. 
Under the assumption that $\gamma_s$ converges to a fixed point as $s$ goes to $\pm \infty$, the map $u$ can be extended to a smooth map defined on $S^2$.
It is called a {\em gradient sphere} in \cite{Au, AH, Ka}. Note that a gradient sphere containing an $S^1$-orbit $\sigma_p$ can be obtained by gluing two gradient holomorphic discs
$u_p^J$ and $u_p^{-J}$ along their common boundary $\sigma_p$.
\end{remark}

%-----------------------------------------------------------------
\vspace{0.1cm}
\section{Maslov index formula }
\label{secMaslovIndicesAndChernNumbers}

For any continuous map $u \colon (\mathbb{D}, \pa \mathbb{D}) \to (M, L)$, any trivialization $u^*TM \cong \mathbb{D} \times \C^n$ 
as a symplectic vector bundle defines a loop $\ell : S^1 \rightarrow \Lambda(n)$ in the Lagrangian Grassmannian $\Lambda(n) \simeq U(n) / O(n)$ of $\C^n$ 
induced by $(u|_{\partial \mathbb{D}})^*TL$.
Then the {\em Maslov index} of $u$ is defined as the degree of the map
\[
	{\det}^2 \circ \ell \colon S^1 \to U(n) / O(n) \to S^1.
\]
It turned out that the Maslov index is independent of the choice of a trivialization and is well-defined up to homotopy, and hence it is defined on $\pi_2(M, L)$.
We denote by $\mu([u])$ the Maslov index of $u$. 

The aim of this section is to derive a formula for the Maslov index of a gradient holomorphic disc. 
More precisely, we will show that the Maslov index of a gradient $J$-holomorphic disc $u \colon \mathbb{D} \rightarrow M$ 
bounded by an $S^1$-invariant Lagrangian submanifold having constant momentum
is determined by the weights of the tangential $S^1$-representation at the unique fixed point contained in $u(\mathbb{D})$. 

We begin with the following.

\begin{lemma}\label{lemma_isotopy}
	Let  $L$ be an $n$-dimensional manifold and $(M,\omega)$ be a $2n$-dimensional symplectic manifold.
	Suppose that there is a Lagrangian isotopy $\varphi \colon  [0,1]  \times L \to M$, that is, a smooth map such that 
	$\varphi_s := \varphi (s, \, \cdot \,) \colon L \to M$ is a Lagrangian embedding
	for each $s \in [0,1]$. 
	We further assume that $\varphi$ is an embedding.
	Then the Maslov index is preserved through the isotopy. 
	More precisely, for any disc $u_0 \colon (\mathbb{D}, \pa \mathbb{D}) \to (M, \varphi_0 (L))$, 
	consider the (extended) disc $u_s \colon (\mathbb{D}, \pa \mathbb{D}) \to (M, \varphi_s (L))$ 
	obtained by gluing the disc $u_0$ and the cylinder 
	\[
		\begin{array}{ccccl}\vspace{0.1cm}
		 	v \colon &  [0,s] \times \pa \mathbb{D} & \to & M \\ \vspace{0.1cm}
		 	& (s', z) & \mapsto & \varphi_{s'}(\varphi_0^{-1}(u_0(z))) 
		\end{array}
	\] 
	along the boundary of the disc $u_0 (\mathbb{D})$.
	Then, we have 
	\[
		\mu ([u_0]) = \mu ([u_s])
	\]
	where $[u_s] \in \pi_2(M, \varphi_s(L))$ is the homotopy class represented by $u_s$ and $\mu([u_s])$ denotes its
	Maslov index.
\end{lemma}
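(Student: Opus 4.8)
The plan is to show the Maslov index is invariant by exhibiting an explicit symplectic trivialization of $u_s^*TM$ that restricts on the boundary to a trivialization of $u_0^*TM$ over the relevant portion, so that the induced loops in the Lagrangian Grassmannian are homotopic and hence have the same degree. First I would recall that $u_s$ is glued from $u_0$ and the cylinder $v$; since $\mathbb{D}$ is the union of a disc and a collar $[0,s] \times \partial\mathbb{D}$ (reparametrizing so the glued object is again a disc), we have $u_s^*TM$ built from $u_0^*TM$ and $v^*TM$, agreeing over the gluing circle $\partial\mathbb{D}$. The Maslov index is additive under such gluing in the sense that $\mu([u_s])$ is computed from a trivialization of $u_0^*TM$ together with the contribution of the boundary loop of $v$; so it suffices to show that the cylinder $v$ contributes zero.

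The key observation is that along $v$ the boundary Lagrangian varies through the isotopy $\varphi_s(L)$. Choose any symplectic trivialization of $v^*TM \cong ([0,s]\times\partial\mathbb{D}) \times \C^n$. Because $[0,s]$ is contractible, this trivialization can be chosen to agree at $s'=0$ with the restriction to $\partial\mathbb{D}$ of a chosen trivialization of $u_0^*TM$. Under this trivialization the subbundle $(v|_{\{s'\}\times\partial\mathbb{D}})^*T\varphi_{s'}(L)$ gives, for each $s' \in [0,s]$, a loop $\ell_{s'} : S^1 \to \Lambda(n)$, and $s' \mapsto \ell_{s'}$ is a homotopy of loops in $\Lambda(n)$ (continuity is immediate from smoothness of $\varphi$ and of the trivialization). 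Since ${\det}^2 : \Lambda(n) \to S^1$ is continuous, $\deg({\det}^2 \circ \ell_{s'})$ is independent of $s'$; in particular $\deg({\det}^2\circ\ell_s) = \deg({\det}^2\circ\ell_0)$. But $\ell_0$ is exactly the boundary loop attached to $u_0$ in its original trivialization, so the glued disc $u_s$, trivialized by combining the $u_0$-trivialization with the $v$-trivialization (which match over the seam), yields the same $\det^2$-degree as $u_0$. Hence $\mu([u_s]) = \mu([u_0])$.

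The step requiring the most care is the compatibility of trivializations across the seam: one must verify that the trivialization of $v^*TM$ can indeed be extended/chosen to be the same as that of $u_0^*TM$ along $\partial\mathbb{D}$, which uses that $[0,s]$ deformation retracts to $\{0\}$ so any symplectic automorphism of the trivial bundle over $\partial\mathbb{D}$ extends over the collar (equivalently, $Sp(2n,\R)$ is connected). The hypothesis that $\varphi$ is an embedding (not merely an immersed isotopy) guarantees that $u_s$ is a genuine continuous map into $(M, \varphi_s(L))$ with $\varphi_s(L)$ an embedded Lagrangian, so $\pi_2(M,\varphi_s(L))$ and its Maslov homomorphism are defined; it also ensures $\varphi_0^{-1}$ in the formula for $v$ makes sense. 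Finally I would note that the homotopy class $[u_s]$ depends only on $[u_0]$ (not on the chosen parametrizations), since two isotopies of the collar are themselves homotopic rel endpoints, so $\mu$ is well-defined on the stated classes and the equality $\mu([u_0]) = \mu([u_s])$ is the claim.
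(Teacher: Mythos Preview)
Your proof is correct and reaches the same endpoint as the paper---showing that the boundary loops $\ell_0$ and $\ell_s$ in $\Lambda(n)$ are homotopic with respect to a common trivialization---but you get there by a more elementary route. The paper invests effort in constructing an ambient symplectic isotopy: it writes $\varphi^*\omega = ds\wedge\alpha$ with each $\alpha_s$ closed, extends $\alpha_s$ to closed $1$-forms $\widetilde{\alpha}_s$ on a tubular neighborhood $\mcal{U}$ of $\varphi([0,1]\times L)$ via a retraction, and then defines time-dependent symplectic vector fields $X_s$ by $\iota_{X_s}\omega=\widetilde{\alpha}_s$ whose flow realizes the Lagrangian isotopy. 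This flow is then used to symplectically identify the pullback bundles $(u_{s}|_{\mcal{V}})^*TM$ over a collar $\mcal{V}\subset\mathbb{D}$, which in particular transports the trivialization and carries one boundary Lagrangian loop to the other. You bypass this construction entirely: you simply trivialize $v^*TM$ over the cylinder $[0,s]\times\partial\mathbb{D}$ abstractly (using that the cylinder retracts onto $\{0\}\times\partial\mathbb{D}$, where the bundle is already trivialized by $u_0$), and read off the homotopy of loops directly from the resulting family $\ell_{s'}$. Your approach is shorter and uses only soft bundle theory; the paper's approach yields the extra geometric information that the Lagrangian isotopy extends to a local symplectic flow, which is not needed for the Maslov index statement itself but is conceptually satisfying.
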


\begin{figure}[H]
	\scalebox{1}{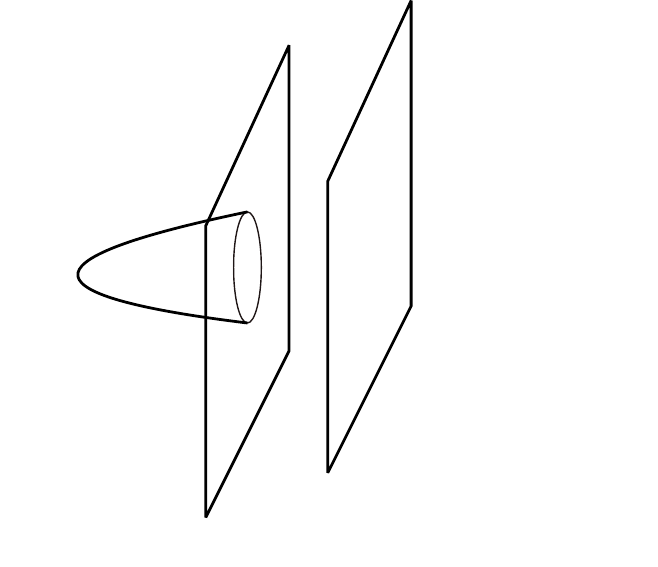}
	\caption{\label{figure_isotopy} Extending disc by Lagrangian isotopy}
\end{figure}

\begin{proof}
	Note that $\varphi^* \omega = ds \wedge \alpha$ where $\alpha$ is a 
	1-forms on $[0,1]\times L$ because $\varphi^* \omega$ vanishes on  $\{ \mathrm{pt} \} \times L$. 
	Moreover, each one-form $\alpha_s := \alpha|_{ \{s\} \times L}$ is closed since $d \varphi^* \omega = ds \wedge d\alpha = 0$, 
	see \cite[Sect. 6.1]{Pol}. Extend $\alpha_s$ to $[0,1] \times L$ such that $\alpha_s |_{\{s'\} \times L} = \alpha_s$ for every $s' \in [0,1]$ and 
	still denote the extended (closed) 1-form (on $[0,1]\times L$) by $\alpha_s$.
	
	Since $\varphi$ is an embedding, we can take a sufficiently small open neighborhood $\mcal{U}$ of $\varphi([0,1] \times L)$ 
	which deformation retracts to $\varphi([0,1] \times L)$
	in $M$ with the retraction $\pi \colon \mcal{U} \rightarrow \varphi([0,1] \times L)$. 
	Then we obtain a family of closed 1-forms
	$\left\{ \widetilde{\alpha}_s := \pi^* \left(( \varphi^{-1})^* \alpha_s \right) \right\}_{s \in [0,1]}$
	on $\mcal{U}$ and it satisfies
	\[
		\varphi^* \left( \widetilde{\alpha}_s \right) = \varphi^* \left( \left.\widetilde{\alpha}_s\right|_{\varphi([0,1]\times L)} \right) 
		= \varphi^* \left( \varphi^{-1} \right)^* \alpha_s
		= \alpha_s, \quad \quad s \in [0,1]
	\]
	where we denote by $\varphi^{-1}$ the inverse of $\varphi$ from $\varphi([0,1] \times L)$ to $[0,1] \times L$.
	
	We claim the (time-dependent) vector field $X_s$ given by $\iota_{X_s} \omega = \widetilde{\alpha}_s$ on $\mcal{U}$ generates the 
	symplectic isotopy that coincides with the Lagrangian isotopy on $\varphi_0(L)$ given by $\varphi$. 
	To show this, note that
	\begin{itemize}
		\item $\mcal{L}_{X_s} \omega = \iota_{X_s} d\omega + d \iota_{X_s} \omega = d \widetilde{\alpha}_s = 0$ (so that $X_s$ is symplectic), and 
		\item $\varphi^*\left(\omega_{\varphi(s, x)} \left( \frac{d}{ds} \varphi(s, x), \cdot \right)\right) = 
		(\varphi^*\omega)_{(s,x)} \left( \frac{\partial}{\partial s}, \cdot \right) = \alpha_{(s,x)} = (\alpha_s)_x$.
	\end{itemize}
	Therefore, we have 
	\[
		\omega_{\varphi(s, x)} \left( \frac{d}{ds} \varphi(s, x), \cdot \right) = (\widetilde{\alpha}_s)_{\varphi(s,x)} = \omega_{\varphi(s,x)}(X_s, \cdot)
		  \quad \text{on} \quad \varphi([0,1] \times L). 
	\]
	That is, $\frac{d}{ds} \varphi(s, x) = X_s(\varphi(s, x))$ since $\widetilde{\alpha}_s$ vanishes on the vertical tangent bundle over $\varphi([0,1] \times L)$ 
	with respect to the retraction $\pi$. This proves the claim. 

	Now we are ready to show $\mu ([u_0]) = \mu ([u_s])$. Let $\mcal{V}$ be a closed collar neighborhood of $\partial \mathbb{D}$ in $\mathbb{D}$ such that 
	$u_s(\mcal{V}) \subset \mcal{U}$ for every $s \in [0,1]$. (This is always possible since $[0,1]$ is compact.) Then $\mcal{V}$ becomes a bordered
	Riemann surface diffeomorphic to an annulus and the pull-back bundle 
	\[
		(u_0|_{\mcal{V}})^* TM \cong \mcal{V} \times \C^n
	\] 
	is trivial and has a symplectic trivialization. 
	We can identify each $(u_s|_{\mcal{V}})^* TM$ with $(u_0|_{\mcal{V}})^* TM$ symplectically via the symplectic isotopy on $\mcal{U}$ generated by $X_s$. 
	Moreover, each $u_s(\partial \mathbb{D})$ represents a loop in the Lagrangian Grassmannian $\Lambda(\C^n)$ (with respect to the trivialization that we have chosen) and the symplectic isotopy
	induces a homotopy from $u_0(\partial \mathbb{D})$ to $u_s(\partial \mathbb{D})$. Consequently, $u_0$ and $u_s$ have the same Maslov index. 	
\end{proof}

\begin{remark}
	The invariance of the Maslov index under Lagrangian isotopy seems to be well-known to experts (the first statement in the proof of Lemma 2.8 in \cite{LM}, for instance). 
	However, we could not find any literature containing the precise statement that we need and its proof. (For exact Lagrangian isotopy case, see \cite[Section 2.1]{Oh2}.)
\end{remark}

Consider a $2n$-dimensional closed symplectic manifold $(M,\omega)$ endowed with an effective Hamiltonian $S^1$-action.
Let $\xi$ be the vector field generated by the action and $H \colon M \rightarrow \R$ a moment map. 
For a point $p \in M$ with $H(p) = r$, consider an $S^1$-invariant Lagrangian submanifold $L$ containing $p$ and contained in the level set $H^{-1}(r)$.
For an $\omega$-compatible $S^1$-invariant almost complex structure $J$, the gradient vector field of $H$ with respect to the metric $g_J$ equals $J\xi$ as computed in ~\eqref{equation_nabla}. 
For any $s \in \mathrm{Im}~H$ that is not an extreme value of $H$, we denote by $M_s$ the quotient space $H^{-1}(s) / S^1$ and by $\omega_s$ the reduced symplectic form on the smooth locus of $M_s$.
The following lemma tells us that a neighborhood of a free $S^1$-orbit $\sigma_a$ in $L$ can be identified with that of another free $S^1$-orbit as long as the orbits are related by the gradient 
flow generated by $J\xi$.

\begin{lemma}\label{lemma_flowing_lagrangian}
	Let $[a,b]$ be the closed interval in $\mathrm{Im} ~H$ not containing any extreme values of $H$ and let $\sigma_a$ a free $S^1$-orbit in $H^{-1}(a)$.
	Assume that the orbit $\sigma_a$ flows into a free orbit $\sigma_b$ through free orbits along the gradient vector field of $H$.
	For any $S^1$-invariant Lagrangian submanifold $L \subset H^{-1}(a)$ containing $\sigma_a$, there exists an $n$-dimensional manifold $\mcal{L}$ 
	and an embedding 
	\[
			\varphi \colon  [a,b] \times \mcal{L}  \rightarrow  M, \quad
					       (s,z)  \mapsto  \varphi_s(z)
	\]
	obeying   
	\begin{itemize}
		\item for each $s \in [a,b]$, $\varphi_s(\mcal{L})$ is an $S^1$-invariant Lagrangian submanifold of $(M,\omega)$,
		\item for each $s \in [a,b]$, $\varphi_s(\mcal{L})$ is in the level set $H^{-1}(s)$,
		\item $\varphi_a (\mcal{L})$ is a tubular neighborhood of $\sigma_a$ in $L$, and
		\item $\varphi_b(\mcal{L})$ is a Lagrangian submanifold of $M$ containing the orbit $\sigma_b$.
	\end{itemize}
In particular, we have a Lagrangian isotopy $\varphi$ from $\varphi_a (\mcal{L})$ to $\varphi_b (\mcal{L})$ and satisfies the condition in Lemma \ref{lemma_isotopy} (since $\varphi$ 
is an embedding.)
\end{lemma}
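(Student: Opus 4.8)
The plan is to build the manifold $\mcal{L}$ and the embedding $\varphi$ by flowing a tubular neighborhood of $\sigma_a$ in $L$ along the gradient vector field $J\xi$ and then correcting the flow so that the image stays inside level sets of $H$ while remaining Lagrangian. First I would fix an $S^1$-invariant $\omega$-compatible almost complex structure $J$, so that $\nabla H = J\xi$ by \eqref{equation_nabla}, and observe that on the interval $[a,b]$, which contains no critical value of $H$, the gradient flow $\gamma_s$ of $J\xi$ is defined for a uniform time and carries $H^{-1}(a)$ diffeomorphically onto $H^{-1}(s)$ for each $s\in[a,b]$. Since $\sigma_a$ flows to $\sigma_b$ through free orbits, a sufficiently small $S^1$-invariant tubular neighborhood $N_a\subset L\cap H^{-1}(a)$ of $\sigma_a$ flows entirely through the free locus. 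Because $\dim L = n$ and $L\subset H^{-1}(a)$ with $H^{-1}(a)$ of dimension $2n-1$, we have $\xi$ tangent to $L$ (as $L$ is $S^1$-invariant) and $J\xi$ transverse to $L$ inside $M$ but possibly not tangent to $H^{-1}(s)$; this is the discrepancy I must fix.

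The key point is that rescaling the gradient direction keeps images inside the level sets. Concretely, I would replace $J\xi$ by the rescaled vector field $Y := (J\xi)/|dH(J\xi)| = (J\xi)/\|\xi\|^2_{g_J}$ on the free locus where $H\in[a,b]$; this is $S^1$-invariant, smooth, and satisfies $dH(Y)\equiv 1$, so its time-$(s-a)$ flow $\psi_{s-a}$ sends $H^{-1}(a)$ to $H^{-1}(s)$. Set $\mcal{L} := N_a$ and $\varphi(s,z) := \psi_{s-a}(z)$. The map $\varphi$ is an embedding onto its image because $\psi$ is a flow of a nonvanishing vector field and distinct levels $H^{-1}(s)$ are disjoint; it is $S^1$-equivariant because $Y$ is $S^1$-invariant, so each $\varphi_s(\mcal{L})$ is $S^1$-invariant and lies in $H^{-1}(s)$; and $\varphi_a(\mcal{L}) = N_a$ is by construction a tubular neighborhood of $\sigma_a$ in $L$, while $\varphi_b(\mcal{L})$ contains $\psi_{b-a}(\sigma_a) = \sigma_b$ since $\sigma_a$ flows to $\sigma_b$ and rescaling a vector field does not change its (unparametrized) trajectories.

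It remains to verify that each $\varphi_s(\mcal{L})$ is Lagrangian. This is the main obstacle, and the cleanest route is to argue at the level of symplectic reduction. For $s\in[a,b]$ let $(M_s,\omega_s)$ be the reduced space on the free locus and $\pi_s\colon H^{-1}(s)\to M_s$ the quotient. The flow $\psi_{s-a}$ descends to a diffeomorphism $\bar\psi_{s-a}\colon M_a\to M_s$, and a standard computation (e.g. via the Duistermaat–Heckman/Moser argument for reductions, or directly using $\mcal{L}_Y\omega = d(\iota_Y\omega)$ and $\iota_Y\omega = -dH/\|\xi\|^2 \circ J = $ a closed form on each level modulo the $\xi$-direction) shows that $\bar\psi_{s-a}^*\omega_s$ and $\omega_a$ differ by the derivative of a closed form, hence have the same cohomology class; more to the point, an $S^1$-invariant submanifold $L'\subset H^{-1}(s)$ of dimension $n$ containing a free orbit is Lagrangian in $M$ if and only if $L'/S^1$ is isotropic (hence Lagrangian by dimension count, since $\dim M_s = 2n-2$) in $M_s$. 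Since $N_a/S^1$ is Lagrangian in $M_a$ (as $L$ is Lagrangian and $\sigma_a$ is an isotropic orbit, $N_a$ is coisotropic-reduced to a Lagrangian), and $\bar\psi_{s-a}$ is a symplectomorphism $M_a\to M_s$ onto the relevant open sets (this is where I need the precise Moser-type statement that $\bar\psi_{s-a}$ is symplectic, not merely cohomology-preserving — one gets this by choosing the primitive correctly, or by instead defining $Y$ so that its reduced flow is exactly the identity in Moser coordinates), the image $\varphi_s(\mcal{L})/S^1 = \bar\psi_{s-a}(N_a/S^1)$ is Lagrangian in $M_s$, so $\varphi_s(\mcal{L})$ is Lagrangian in $M$. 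The final sentence of the statement is then immediate: $\varphi$ is an embedding of $[a,b]\times\mcal{L}$, so it satisfies the hypothesis of Lemma \ref{lemma_isotopy}, and restricting the parameter to $[a,b]$ gives the asserted Lagrangian isotopy from $\varphi_a(\mcal{L})$ to $\varphi_b(\mcal{L})$.
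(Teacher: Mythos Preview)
Your overall strategy---pass to the symplectic reductions $M_s = H^{-1}(s)/S^1$, observe that an $S^1$-invariant $n$-manifold in $H^{-1}(s)$ is Lagrangian iff its quotient is Lagrangian in $(M_s,\omega_s)$, and transport $N_a/S^1$ through the family---is exactly the paper's strategy. The gap is the step you yourself flag: the reduced flow $\bar\psi_{s-a}\colon M_a\to M_s$ induced by the (rescaled) gradient vector field $Y=J\xi/\|\xi\|^2_{g_J}$ is \emph{not} a symplectomorphism in general. Indeed $\iota_Y\omega = g_J(\xi,\cdot)/\|\xi\|^2$ is not closed, so the flow of $Y$ is not symplectic on $M$, and on the reductions the Duistermaat--Heckman variation gives $\frac{d}{ds}\bar\psi_{s-a}^*\omega_s \neq 0$ even locally; the forms are merely cohomologous near a free orbit, not equal. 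So $\bar\psi_{s-a}(N_a/S^1)$ need not be Lagrangian in $(M_s,\omega_s)$, and your argument as written does not establish the third bullet.

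The paper closes this gap not by correcting the gradient flow but by abandoning it at the reduced level: it chooses a smooth family of Darboux embeddings $\iota_s\colon(\mcal{W},\omega_{\R^{2n-2}})\hookrightarrow (M_s,\omega_s)$ with $\iota_s(O)=[\sigma_s]$, pulls $N_a/S^1$ back to a Lagrangian $\underline{\mcal{L}}\subset(\mcal{W},\omega_{\R^{2n-2}})$ via $\iota_a^{-1}$, and then pushes it forward via $\iota_s$ for each $s$. Since each $\iota_s$ is a symplectic embedding, $\iota_s(\underline{\mcal{L}})$ is automatically Lagrangian in $M_s$. Trivializing the $S^1$-bundle over the contractible neighborhood $\underline{\mcal{V}}$ then lifts this to the desired $\varphi$. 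Your parenthetical ``define $Y$ so that its reduced flow is the identity in Moser coordinates'' is essentially this construction in disguise (Moser and Darboux being the same device here), but you would need to actually carry it out; once you do, you recover the paper's proof.
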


\begin{proof}
Recall that for any point $q \in M$, $\gamma_s(q) := \gamma(s,q) \colon \R \rightarrow M$ is a parametrized integral curve of $J\xi$ such that $\gamma(0, q) = q$ where $\gamma$ is defined in 
\eqref{equation_gradient_flow}. 

Let $\sigma_b \subset H^{-1}(b)$ be the $S^1$-orbit into which the given free orbit $\sigma_a$ flows along the gradient vector field $J\xi$.
Fix a point $p \in \sigma_a$. 
By reparametrizing the variable $s$ if necessary, we may assume that $\gamma_s (p) \in H^{-1}(s)$ for every
$s \in [a,b]$. 
Consider an $S^1$-equivariant embedding {of a closed cylinder} 
\[
\Gamma \colon [a,b] \times S^1 \to M \quad (s, t) \mapsto \gamma_{s} (t \cdot p).
\]

Now we take an $S^1$-invariant neighborhood $\mcal{V}$ of the cylinder $\mathrm{Im}~\Gamma$ in $H^{-1}([a,b])$ sufficiently small so that the induced $S^1$-action is still free on $\mcal{V}$. 
The quotient $\underline{\mcal{V}} := \mcal{V} / S^1$ can be thought as a parametrized (by $s$) 
family of open neighborhoods of each point $[\sigma_s]$ in the quotient space $M_s$.
Moreover, we may choose $\mcal{V}$ such that $\underline{\mcal{V}}$ is contractible, 
and therefore we may assume that $\mcal{V} = \underline{\mcal{V}} \times S^1$ where the $S^1$-action on $\mcal{V}$ is just the rotation of $S^1$ on the second factor of 
$\underline{\mcal{V}} \times S^1$.

Since the level $[a,b]$ is compact, we may choose a sufficiently small open ball $\mcal{W} \subset \R^{2n-2}$ centered at the origin $O$ of $\R^{2n-2}$ such that there is a symplectic embedding 
\[
	\iota_s \colon (\mcal{W}, \omega_{\R^{2n-2}}) \hookrightarrow (M_s, \omega_s), 
\]
satisfying $O \mapsto [\sigma_s]$ for each $s \in [a,b]$. Here, $\omega_{\R^{2n-2}}$ is the standard symplectic form on $\mcal{W} \subset \R^{2n-2}$.
We then have a smooth embedding
\[
	\psi \colon [a, b] \times (\mcal{W}, \omega_{\R^{2n-2}}) \to \underline{\mcal{V}} \subset \bigcup_{a \leq s \leq b} (M_s, \omega_s) 
\]
such that 
\begin{itemize}
\item $\psi (s, \cdot) = \iota_s$, and 
\item $\psi_s (O) = [\sigma_s]$ for each $s \in [a,b]$. 
\end{itemize}
 
Consider any $S^1$-invariant Lagrangian submanifold $L$ of $(M,\omega)$ in the level set $H^{-1}(a)$ containing the orbit $\sigma_a$.
Setting $\underline{L}$ to be the quotient of $L$ by the $S^1$-action, take a tubular neighborhood $\underline{{U}}$ of $[\sigma_a]$ in $\underline{L}$, 
which is a Lagrangian submanifold of $(M_a, \omega_a)$. By taking the neighborhood $\underline{{U}}$ small enough, we may assume that its closure is contained in $\psi_a (\mcal{W})$.
Denote by $\underline{\mcal{L}}$ the Lagrangian submanifold $\psi_a^{-1}(\underline{{U}})$ of $(\mcal{W}, \omega_{\R^{2n-2}})$.
We then have a family $\{ \psi_s \left( \underline{\mcal{L}} \right) ~|~ s \in [a,b] \}$ such that each $\psi_s\left( \underline{\mcal{L}} \right)$ is Lagrangian and contains $[\sigma_s]$ 
of $(M_s, \omega_s)$. 

Finally, letting $\mcal{L} := \underline{\mcal{L}} \times S^1$, we define
\[
	\begin{array}{cccl}
		\varphi \colon & [a, b] \times \mcal{L} & \to & \mcal{V} = \underline{\mcal{V}} \times S^1 \\
					& (s, (\underline{\ell}, t) ) & \mapsto & (\psi(s, \underline{\ell}), t)
	\end{array}, \quad (\underline{\ell}, t) \in \underline{\mcal{L}} \times S^1 = \mcal{L}.
\]
Then, the following diagram commutes: 
 \begin{equation}
		 \xymatrixcolsep{5pc}   \xymatrix{
			      [a,b] \times \mcal{L}  \ar[d]^{/S^1} \ar[r]^{\varphi} & \mcal{V} \ar[d]^{/S^1}\\
			      [a,b] \times \underline{\mcal{L}} \ar[r]^{\psi} & \underline{\mcal{V}}.
			    }
 \end{equation}
The map $\varphi$ is our desired embedding.
\end{proof}

For a given $r \in \mathrm{Im}~H$, let $L \subset (M,\omega)$ be an $S^1$-invariant Lagrangian submanifold lying on a level set $H^{-1}(r)$
and let $\sigma_p$ be the free $S^1$-orbit containing a point $p \in L$. Our goal is to compute the Maslov index of the gradient $J$-holomorphic disc $u^J_p \colon (\mathbb{D}, \pa \mathbb{D}) \to (M, L)$ in Definition~\ref{definition_gradient_holo_disk}. 
We begin with the following special case.

\begin{lemma}\label{lemma_codimtwo}
	Suppose that a gradient $J$-holomorphic disc $u_p^J$ maps the center of $\mathbb{D}$ to some $S^1$-fixed point $z_p$. 
	If the connected component $Z_p$ of $M^{S^1}$ containing $z_p$ is of codimension two in $M$, then the Maslov index of the gradient disc $u_p^J$ is two, i.e.,  $\mu([u_p^J]) = 2$.
\end{lemma}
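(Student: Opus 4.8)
The plan is to reduce this codimension-two case to the classical Chern-number computation for a gradient sphere, using Lerman's symplectic cut (as the introduction promises) together with the Lagrangian-isotopy invariance already established in Lemma~\ref{lemma_isotopy} and the flowing construction of Lemma~\ref{lemma_flowing_lagrangian}. First I would observe that, since $Z_p$ has codimension two, the single weight of the $S^1$-representation on the normal bundle $\nu(Z_p)$ at $z_p$ is some $\pm 1$ (effectiveness forces it to be a unit; the sign is determined by whether $z_p$ is a local max or local min of $H$ along the disc, and for the gradient disc $u_p^J$ flowing \emph{into} $z_p$ it is the "incoming" sign). So the content of the claim is exactly that the sum of weights $n_{z_p}$ at $z_p$ equals $-1$, and then $\mu([u_p^J]) = -2 n_{z_p} = 2$; but since Theorem~\ref{thmx_Maslovindex} is what we are building toward, I would instead prove $\mu([u_p^J]) = 2$ directly and treat this lemma as the base case of that theorem.

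The key steps, in order: (1) Use Lemma~\ref{lemma_flowing_lagrangian} to flow the tubular neighborhood $\varphi_a(\mathcal L)$ of $\sigma_p$ in $L$ up (or down) along the gradient flow of $J\xi$ to a level just below $c = H(z_p)$, so that by Lemma~\ref{lemma_isotopy} the Maslov index of $u_p^J$ equals the Maslov index of the short gradient disc from that nearby level into $z_p$. This localizes the whole problem to an $S^1$-invariant neighborhood $N$ of $Z_p$ in $M$. (2) Apply the equivariant symplectic neighborhood theorem: since $Z_p$ is a symplectic submanifold of codimension two fixed by $S^1$, $N$ is $S^1$-equivariantly symplectomorphic to a neighborhood of the zero section in the total space of the normal line bundle $\nu(Z_p) \to Z_p$, with $S^1$ acting by rotation of fibers with weight $\pm 1$; the moment map near $Z_p$ is, up to a constant, $\mp\tfrac12|v|^2$. (3) In this local model the gradient disc is explicit: it is (a reparametrization of) a holomorphic disc in a single fiber $\C$ of the line bundle, running from a circle $\{|v| = \varepsilon\}$ in to the origin, i.e.\ essentially the standard disc $z \mapsto \varepsilon z$ in $\C$. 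The boundary $S^1$-orbit bounds the Lagrangian $S^1 \times (\text{pt})$-type fiber, and the Maslov index of the standard disc $(\mathbb D,\partial\mathbb D) \to (\C,S^1)$ is $2$; the remaining $\C^{n-1}$ directions (the tangent directions to $Z_p$, which the disc maps constantly into) contribute $0$. Summing gives $\mu([u_p^J]) = 2$.

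Equivalently — and this is the packaging I would actually use to stay consistent with the paper's stated method — glue $u_p^J$ to $u_p^{-J}$ along $\sigma_p$ to form a gradient sphere, or better, perform Lerman's symplectic cut of $M$ along the level $H^{-1}(r)$ containing $L$: the cut space $\overline M_{\geq r}$ contains the reduced space $M_r = H^{-1}(r)/S^1$ as a symplectic hypersurface, the gradient disc $u_p^J$ caps off to a gradient \emph{sphere} $S$ in $\overline M_{\geq r}$ meeting $M_r$ once transversally, and the Chern number $c_1(S)$ of such a gradient sphere is computed by the Ahara--Hattori / Karshon formula as (minus twice) the sum of weights at its unique "new" fixed point, here $\mp 1$ absorbed into the codimension-two hypothesis, yielding $c_1(S) = 2$. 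One then reads off $\mu([u_p^J]) = c_1(S) = 2$ because the disc accounts for all of the sphere's Chern number once the cap is the exceptional-divisor disc. I expect the main obstacle to be purely bookkeeping rather than conceptual: getting the sign of the normal weight right (which depends on whether $u_p^J$ is an ascending or descending gradient disc, i.e.\ whether $H(z_p) > H(p)$, and which is dictated by the convention $\nabla H = J\xi$ fixed in~\eqref{equation_nabla}), and checking that the Maslov-index contribution of the $T Z_p$-directions genuinely vanishes — this last point needs the trivialization of $u_p^J{}^* TM$ to respect the splitting $TZ_p \oplus \nu(Z_p)$ along $\partial\mathbb D$, which follows because $L$ near $\sigma_p$ splits correspondingly after the flowing step. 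Neither difficulty is serious, so this lemma is genuinely the easy base case of Theorem~\ref{thmx_Maslovindex}.
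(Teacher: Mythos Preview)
Your direct local approach in steps (1)--(3) is correct and is exactly what the paper does: flow $L$ to a level $c-\epsilon$ just below the maximum via Lemma~\ref{lemma_flowing_lagrangian}, invoke Lemma~\ref{lemma_isotopy} for Maslov-index invariance, then use the equivariant Darboux theorem near $z_p$ to identify the short gradient disc with $z \mapsto (\sqrt{2\epsilon}\,z, 0, \ldots, 0)$ in $\C^n$ bounded by $S^1 \times L'$, whence $\mu = 2$. The paper also supplies the argument (which you flag but do not carry out) that codimension two forces $H(z_p)$ to be a local, hence global, extremum, and the convention $\nabla H = J\xi$ makes it the maximum; this pins down the sign you worry about.

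Your ``preferred packaging'' via symplectic cut at the level $r = H(p)$, however, has a gap. Cutting along $H^{-1}(r)$ collapses every $S^1$-orbit in that level set, so $L \subset H^{-1}(r)$ descends to an $(n{-}1)$-dimensional submanifold of the reduced space $M_r$ and is no longer a Lagrangian in the $2n$-dimensional cut space. Thus, although $u_p^J$ does cap off to a sphere $S$ with computable Chern number, there is no Lagrangian boundary condition surviving in the cut space against which to read off a Maslov index, and your assertion that $\mu([u_p^J])$ equals $c_1(S)$ is unjustified. The same objection applies to gluing $u_p^J$ to $u_p^{-J}$: the relation $2c_1 = \mu([u_p^J]) + \mu([u_p^{-J}])$ is of no use until you know $\mu([u_p^{-J}])$, and the other endpoint $z_p^-$ need not lie on a codimension-two component. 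The paper's use of the symplectic cut comes only later, in Theorem~\ref{theorem_Maslov_index_formula}, and cuts at $r - \epsilon$ \emph{below} $L$, so that $L$ survives intact; the new downward disc then lands on a freshly created codimension-two minimum, and it is precisely \emph{this} lemma that is invoked there to give that disc Maslov index $2$. In short, Lemma~\ref{lemma_codimtwo} is the base case feeding the cut argument, not a consequence of it, so stick with your steps (1)--(3).
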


\begin{proof}
	We first recall the following general facts about a Hamiltonian $S^1$-action :  
	\begin{itemize}
		\item (Equivariant Darboux Theorem \cite[Chap. IV-4.d]{Au}) 
		For any fixed component $Z$ of the given $S^1$-action and a point $z \in Z$, 
		there exists a system of local coordinates $z_1, \cdots, z_n$ near $z$ such that 
			\begin{itemize}
				\item $\omega = \frac{i}{2} \sum dz_i \wedge d\overline{z}_i$, and 
				\item a moment map $H : M \rightarrow \R$ is locally expressed by 
				\[
					H(z_1, \cdots, z_n) = H(z) + \sum_{i=1}^{n} c_i |z_i|^2
				\]
				for some integers $c_1, \cdots, c_n$. The integers are called {\em the weights of the $S^1$-action at $z$}.
			\end{itemize}			
		\item The set of critical points of $H$ coincides with the fixed point set of the action and is a symplectic submanifold of $(M,\omega)$. 
		In particular, the coordinates $z_{i_1}. \cdots, z_{i_k}$ corresponding to zero weights (i.e., $c_{i_j} = 0$) gives a local coordinate system near $z$ in $Z$. 
		\item (\cite[Chap. IV-2.3]{Au}) $H$ is a perfect Morse--Bott function and the Morse index equals twice the number of negative weights of the action at $z$.
		\item (\cite[Chap. IV-3.2]{Au}) Every level set of $H$ is connected. In particular, a local extremum of $H$ must be a global extremum.
	\end{itemize}
	The above statements imply that $H(Z_p) = H(z_p)$ is the maximum of $H$ in the following reason. 
	If $Z_p$ is of codimension two, it follows that all weights at $z$ except for one vanish so that $H(z_p)$ is a (local) extremum, and therefore a global extremum of $H$.  
	Also, by the construction of the gradient holomorphic disc $u_p^J$, there is a gradient flow converging to $z_p$ and this implies $z_p$ never can be a minimum.
	
	Let $c := H(Z_p) = H(z_p)$, the maximum of $H$. 
	By the equivariant Darboux theorem, there exists an open neighborhood $\mcal{U}_p$ of $z_p$ that is $S^1$-equivariantly symplectomorphic to an open neighborhood $\mcal{V}_p$ 
	of the origin in $\C^n$ 
	equipped with the standard symplectic form and the linear $S^1$-action on $\C^n$ given by 
	\[
		t \cdot (z_1, \cdots, z_n) = (t^{-1}z_1, z_2, \cdots, z_n).
	\]
Note that $(0, z_2, \cdots, z_n)$ serves as a local coordinate for $Z_p$ near $z_p$. A moment map of the action on $\C^n$ is taken as
	\[
		H(z) = c - \displaystyle \frac{1}{2} |z_1|^2, \quad z = (z_1, \cdots, z_n) \in \C^n.
	\]

	Applying Lemma \ref{lemma_flowing_lagrangian} to $\sigma_r:= \sigma_p$, $L$, and $[r, c-\epsilon]$ for any positive number $\epsilon (< (c-r))$, we obtain an $n$-dimensional manifold $\mcal{L}$ and an embedding 
	\[
		\varphi \colon [r, c-\epsilon] \times \mcal{L} \rightarrow M
	\]
	such that $L_s := \varphi(s, \mcal{L})$ is an $S^1$-invariant Lagrangian submanifold of $(M,\omega)$ lying on $H^{-1}(s)$. 
	Let $p_0 \in \mcal{L}$ such that $\varphi(r, p_0) = p$.
	
	By taking $\epsilon$ sufficiently small, we can make $\sigma_{c-\epsilon}$ contained in the Darboux neighborhood $\mcal{U}_p$. 
	Since the Lagrangian $L_r = \varphi(r, \mcal{L})$ still bounds the disc $u_p^J$ and $L_r \subset L$, the Maslov index of $u_p^J$ bounded by $L_r$ is equal to that  by $L$. 
	Moreover, it sufficies to calculate the Maslov index of $u_{\varphi(c-\epsilon, p_0)}^J$ (bounded by $L_{c-\epsilon}$) since the Maslov index is preserved through the Lagrangian isotopy by Lemma \ref{lemma_isotopy}. 	
	Therefore, we may assume without any loss of generality that $r = H(p) = c - \epsilon$.

	In the neighborhood $\mcal{V}_p$ of the origin (corresponding to the Darboux neighborhood $\mcal{U}_p$), every $S^1$-orbit containing $z = (z_1, \cdots, z_n)$ can be written as $\{ (w, z_2, \cdots, z_n) ~|~ |w| = |z_1| \}$.
	In particular, $L_{c-\epsilon} \cap\, \mcal{U}_p$ corresponds to $S^1 \times L' \subset \C \times \C^{n-1}$ in $\mcal{V}_p$ for some Lagrangian $L^\prime$ in $\C^{n-1}$ where $S^1 = \{ w \in \C ~|~ |w|^2 = 2\epsilon\}$. 
	Passing $\mcal{U}_p$ to $\mcal{V}_p$, the map $u_{\varphi(c-\epsilon, p_0)}^J$ is precisely $z \mapsto (\sqrt{2 \epsilon} \cdot z, 0, \cdots, 0)$ from $\mathbb{D}$ to $\mcal{V}_p \subset \C^n$.
	Thus, the Maslov index of $u_{c-\epsilon}^J$ is two. This finishes the proof.
\end{proof}	
		
	We list two lemmas that will be used in the proof of Theorem \ref{theorem_Maslov_index_formula}.
		
\begin{lemma}[Page 75 in \cite{MS}]\label{lemma_Chern_number}
	Consider a continuous map $u \colon S^2 \to (M, \omega)$ and suppose that $S^2$ splits into two discs $\mathbb{D}_1$ and $\mathbb{D}_2$ such that $u$ splits into two maps
	$u_+ \colon (\mathbb{D}_1, \pa \mathbb{D}_1) \to (M, L)$ and $u_- \colon (\mathbb{D}_2, \pa \mathbb{D}_2) \to (M, L)$ for some Lagrangian submanifold $L \subset (M,\omega)$. Then,
	\[
		2c_1([u]) = \mu ([u_+]) + \mu([u_-]). 
	\]
\end{lemma}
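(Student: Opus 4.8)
The statement is a standard clutching computation, and the plan is to reduce \emph{both} sides to degrees of circle-valued maps built from the same pair of trivializations. Since $c_1([u])$ and the Maslov index depend only on the isomorphism class up to homotopy of the pulled-back bundles, I may assume $u$ is smooth (or simply work with topological bundles). Write $C := \pa \mathbb{D}_1$ for the common circle along which $S^2$ is obtained by gluing $\mathbb{D}_1$ and $\mathbb{D}_2$, and note that the standard boundary orientations of $\mathbb{D}_1$ and $\mathbb{D}_2$ induce \emph{opposite} orientations on $C$ — this is the mechanism that makes the argument close. Set $E := u^*TM \to S^2$, a symplectic vector bundle, so that $c_1([u]) = \langle c_1(E), [S^2]\rangle$ by definition, and set $F := (u|_C)^*TL$, a Lagrangian subbundle of $E|_C$ (well-defined since both $u_+(\pa \mathbb{D}_1)$ and $u_-(\pa \mathbb{D}_2)$ land in $L$).

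First I would trivialize. As $\mathbb{D}_1$ and $\mathbb{D}_2$ are contractible, choose symplectic trivializations $\tau_i \colon E|_{\mathbb{D}_i} \xrightarrow{\sim} \mathbb{D}_i \times \C^n$; over $C$ they differ by a clutching loop $\Phi \colon C \to \mathrm{Sp}(2n,\R)$ characterized by $\tau_1(v) = \Phi(x)\cdot \tau_2(v)$ for $v \in E_x$, $x \in C$. Retracting $\mathrm{Sp}(2n,\R)$ onto $U(n)$ we may take $\Phi$ valued in $U(n)$, and the description of the first Chern number of a bundle over $S^2$ assembled from two trivial pieces via a clutching function — which is precisely the computation on \cite[p.~75]{MS} — gives $c_1([u]) = \deg\big({\det}\circ\Phi \colon C \to S^1\big)$.

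Next I would compute the two Maslov indices through the \emph{same} trivializations. Reading $F$ through $\tau_1$ yields a loop $\ell_1 \colon C \to \Lambda(n)$, traversed in the $\mathbb{D}_1$-boundary direction, and by the definition of the Maslov index recalled at the start of this section, $\mu([u_+]) = \deg({\det}^2\circ \ell_1)$. Reading $F$ through $\tau_2$ yields a loop $m$ on $C$ with $\ell_1 = \Phi\cdot m$ pointwise; but the boundary of $\mathbb{D}_2$ traverses $C$ in the opposite direction, so the loop relevant to $u_-$ is $m$ run backwards, whence $\mu([u_-]) = -\deg({\det}^2\circ m)$. Since $\deg$ is additive under pointwise products of circle-valued maps and ${\det}^2(\Phi\cdot m) = ({\det}\Phi)^2\cdot {\det}^2 m$, we get $\mu([u_+]) = 2\deg({\det}\Phi) + \deg({\det}^2 m)$; adding $\mu([u_-])$, the $\det^2 m$ contributions cancel and $\mu([u_+]) + \mu([u_-]) = 2\deg({\det}\Phi) = 2c_1([u])$, as claimed.

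The step requiring the most care — and the only place an error would be fatal — is the orientation bookkeeping: one must check that $C$ is traversed with opposite orientations by the boundary loops of $\mathbb{D}_1$ and $\mathbb{D}_2$, and that the sign convention for the clutching function that makes $c_1([u]) = \deg({\det}\Phi)$ is the one compatible with this choice; a sign slip here would collapse the right-hand side to $0$. An alternative that sidesteps the bookkeeping entirely is simply to invoke \cite[p.~75]{MS}, from which the lemma is quoted.
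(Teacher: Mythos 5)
Your proof is correct: the paper itself gives no argument for this lemma, quoting it directly from \cite[p.~75]{MS}, and your clutching-function computation is precisely the standard proof behind that citation — trivialize $u^*TM$ over each disc, identify $c_1([u])$ with $\deg(\det\circ\Phi)$ of the transition loop, and observe that the two boundary Lagrangian loops, read in the two trivializations and traversed with the opposite boundary orientations, contribute $2\deg(\det\Phi)$ plus cancelling terms. The orientation bookkeeping you flag is handled correctly (the $\det^2 m$ terms cancel exactly because $\pa\mathbb{D}_1$ and $\pa\mathbb{D}_2$ induce opposite orientations on the gluing circle), so no gap remains.
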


\begin{lemma}[Lemma 4.3 in \cite{AH}, Lemma 3.1 in \cite{Go}]\label{lemma_AH_Go}
	Suppose that $\pi \colon E \rightarrow S^2$ be an $S^1$-equivariant complex line bundle where the $S^1$-action on $S^2$ is given by $k$-times rotation with two fixed points $N$ and $S$. Then 
	the first Chern number $e$ of $E$ satisfies 
	\[
		-ek = m_N - m_S
	\]
	where the action near $N$ and $S$ are locally expressed as 
	\[
		t \cdot (z_b, z_f) = (t^{-k} z_b, t^{m_N} z_f) \quad \text{and} \quad t \cdot (z_b, z_f) = (t^k z_b, t^{m_S} z_f), \quad t \in S^1,
	\]
	respectively. (Here, $z_b$ and $z_f$ denote the local coordinates of the base and fibers, respectively.)
	More generally, if $\pi : E \rightarrow S^2$ is a rank $r$ $S^1$-equivariant complex vector bundle, then $E$ splits into the direct sum 
	$E_1 \oplus \cdots \oplus E_r$ of $S^1$-equivariant vector bundles and the first Chern number of $E$ is given by 
	\[
		-ek = m_N - m_S, \quad \quad m_N = m_N^1 + \cdots + m_N^r, \quad m_S = m_S^1 + \cdots m_S^r
	\]
	where $m_N^i$ and $m_S^i$ denote the weights of the $S^1$-action at $N$ and $S$ along the fiber of $E_i$, respectively.
\end{lemma}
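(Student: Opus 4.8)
The plan is to read $c_1(E)$ off a clutching (transition) function for $E$ over $S^2=\CP^1$ and to use the $S^1$-equivariance of that clutching function to pin down its winding number.

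\emph{The line bundle case.} Write $S^2=D_N\cup D_S$ as the union of two closed discs glued along the equator $C:=D_N\cap D_S\cong S^1$, with holomorphic coordinates $z$ on $D_N$ (vanishing at $N$) and $w$ on $D_S$ (vanishing at $S$), normalized so that $zw=1$ on $C$; the $S^1$-action then reads $z\mapsto t^{-k}z$ on $D_N$ and $w\mapsto t^{k}w$ on $D_S$. Each disc is $S^1$-equivariantly contractible onto the fixed point it contains (via $z\mapsto sz$, $s\in[0,1]$, which commutes with the linear action), so $E|_{D_N}$ and $E|_{D_S}$ are equivariantly isomorphic to trivial bundles $D_N\times E_N$ and $D_S\times E_S$, on whose fibers $S^1$ acts by the characters $t\mapsto t^{m_N}$ and $t\mapsto t^{m_S}$ respectively. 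Let $g\colon C\to\C^\times$ be the clutching function identifying these two pieces, $(z,v)\sim(w,g(z)v)$ for $z\in C$; by construction $c_1(E)[S^2]=\pm\,\deg g$, the sign being that of the orientation convention under which the statement is to hold.

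\emph{Equivariance pins down $\deg g$.} Equivariance of the two chosen trivializations gives, on $C$,
\[
g(t^{-k}z)=t^{\,m_S-m_N}\,g(z),\qquad t,z\in S^1 .
\]
Specialize $z=1$: viewed as maps $S^1\to\C^\times$ in the variable $t$, the left-hand side is the composite of $t\mapsto t^{-k}$ with $g$ and hence has winding number $-k\deg g$, while the right-hand side has winding number $m_S-m_N$. Therefore $-k\deg g=m_S-m_N$, i.e. $\deg g=(m_N-m_S)/k$ (in particular $k\mid m_N-m_S$). Matching with the sign convention in the statement gives $e=c_1(E)[S^2]=-\deg g$, hence $-ek=k\deg g=m_N-m_S$.

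\emph{General rank, and the main point.} When $E$ has rank $r$ it is given to us as an $S^1$-equivariant direct sum $E_1\oplus\cdots\oplus E_r$ of line bundles (equivalently: diagonalize the fiber $S^1$-representations in the equivariant trivializations over $D_N$ and $D_S$ and pass to $\det E$, which reduces the computation to the line bundle case); since $c_1$ and the fiber weight at a fixed point are both additive over direct sums, applying the previous step to each $E_i$ yields $-ek=m_N-m_S$ with $m_N=\sum_i m_N^i$ and $m_S=\sum_i m_S^i$. The only genuinely non-formal issue here is bookkeeping: fixing once and for all the orientation/clutching sign convention so that the identity emerges as $-ek=m_N-m_S$ rather than its negative. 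Everything else is elementary, and one may instead simply quote \cite{AH, Go}.
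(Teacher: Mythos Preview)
The paper does not supply its own proof of this lemma: it is quoted from \cite{AH} and \cite{Go} and used as a black box in the proof of Theorem~\ref{theorem_Maslov_index_formula}. So there is no in-paper argument to compare against; your clutching-function proof is exactly the standard argument behind the cited references and is correct.

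Two small remarks. First, your handling of the sign is honest but a little breezy: you derive $k\deg g=m_N-m_S$ cleanly from the equivariance relation $g(t^{-k}z)=t^{m_S-m_N}g(z)$, and then invoke ``matching the convention'' to set $e=-\deg g$. That is acceptable here since the sign genuinely depends on the chosen orientation of $S^2$ and the direction of the clutching identification, and the paper's own use of the lemma (applying it to $TM|_{\widehat{u}_p^{\bar J}(S^2)}$ with $k=1$) only needs the formula up to that fixed convention. Second, for the rank-$r$ case you effectively sidestep the equivariant splitting claim by passing to $\det E$, which is the right move for the Chern number identity (since $c_1(E)=c_1(\det E)$ and the fiber weights of $\det E$ at $N,S$ are $\sum_i m_N^i,\sum_i m_S^i$). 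Strictly speaking the lemma also asserts the equivariant splitting $E\cong E_1\oplus\cdots\oplus E_r$, which your parenthetical does not quite prove; but the paper only uses the Chern number formula, so this omission is harmless for the application.
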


Now, we are ready to state and prove our main theorem in this section.

\begin{theorem}\label{theorem_Maslov_index_formula}
	Let $(M,\omega)$ be a $2n$-dimensional symplectic manifold equipped with an effective Hamiltonian $S^1$-action.
	Let $L$ be an $S^1$-invariant Lagrangian submanifold of $(M,\omega)$ lying on some level set of a moment map $H$. 
	Suppose that a class $\beta \in \pi_2(M, L)$ is represented by a gradient holomorphic disc of a free $S^1$-orbit $\sigma_p$ containing a point $p \in L$. 	
	Then 
	\[
		\mu(\beta) = \mu([u_p^J]) = -2n_p
	\]	
	where $n_p$ is the sum of weights at $z_p$, the image of the center of $\mathbb{D}$ under $u_p^J$.
\end{theorem}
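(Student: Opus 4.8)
The plan is to reduce the general case to Lemma~\ref{lemma_codimtwo} via Lerman's symplectic cut, exactly as advertised in the introduction. Given the gradient disc $u_p^J$ converging to the fixed point $z_p$ lying in a fixed component $Z_p$, let $c := H(z_p)$. Since the image of $u_p^J$ consists of $S^1$-orbits together with the single fixed point $z_p$, and since the gradient flow of $H$ increases $H$, the disc lies in the sublevel set $H^{-1}((-\infty, c])$ and $z_p$ is the point of maximal momentum on the disc. First I would perform the symplectic cut of $(M,\omega)$ at the level $c$ (more precisely at a level $c' = c - \epsilon$ for small $\epsilon$ and then push to the boundary, or directly at $c$ if $c$ is a regular value near the disc): this produces a symplectic manifold $M_{\leq c}$ containing the reduced space $M_c$ as a symplectic hypersurface, and the $S^1$-action descends. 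The gradient disc $u_p^J$, whose boundary $\sigma_p$ and whose interior orbits all have $H < c$, is unchanged by the cut and becomes a gradient disc in $M_{\leq c}$; crucially, inside $M_{\leq c}$ the fixed point $z_p$ now sits inside a fixed component whose codimension is exactly $2$ if and only if... no — rather, the point is that we now glue.

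The key step is gluing. Form the gradient sphere associated to $\sigma_p$ inside a suitable $S^1$-space: take the gradient disc $u_p^J$ (ending at $z_p$) and glue it along $\sigma_p$ to a second disc. For the second disc I would use the following: after the symplectic cut at level $c$, the orbit $\sigma_p$ also bounds a disc lying in a neighborhood of $M_c$, or more cleanly, I would instead cut at a level just \emph{below} $r = H(p)$ as well, producing a closed $S^1$-manifold $\widetilde M$ in which $\sigma_p$ is an orbit flowing up to $z_p$ and down to a fixed point $z_p'$ of a codimension-two fixed component (the exceptional divisor created by the lower cut). Then $u := u_p^J \# u_p^{-J}$ — or rather $u_p^J$ glued to the short gradient disc running from $\sigma_p$ down to $z_p'$ — is a gradient sphere $u \colon S^2 \to \widetilde M$ carrying an $S^1$-action by rotation (say $k$ times, where $k$ is the order determined by the orbit). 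By Lemma~\ref{lemma_Chern_number}, $2c_1([u]) = \mu([u_p^J]) + \mu([\text{lower disc}])$, and by Lemma~\ref{lemma_codimtwo} applied in $\widetilde M$, the lower disc (ending at a codimension-two fixed component) has Maslov index $2$. It remains to compute $c_1([u])$. For this I would apply Lemma~\ref{lemma_AH_Go}: split $u^*T\widetilde M$ into $S^1$-equivariant line bundles over $S^2$, with fiber weights $m_N^i$ at $z_p$ and $m_S^i$ at $z_p'$; one of these summands is the tangent-to-the-sphere bundle (Chern number $2$, weights $\pm k$), and the remaining weights at $z_p$ are precisely the weights of the $S^1$-action at $z_p$ in $M$, hence sum to $n_p$, while the weights at $z_p'$ sum to a controlled value coming from the codimension-two model. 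Summing the line-bundle identities $-e_i k = m_N^i - m_S^i$ gives $-2 c_1([u]) k = \sum m_N^i - \sum m_S^i$, and solving yields $\mu([u_p^J]) = 2 c_1([u]) - 2 = -2 n_p$.

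Let me restructure this more carefully, since the bookkeeping of which cut to use is the delicate part. The cleanest route: apply the symplectic cut at level $c - \epsilon$ to obtain $M^{\mathrm{cut}}$, in which the interior of $u_p^J$ survives and the boundary orbit, after flowing a tiny bit up, caps off at the exceptional divisor $M_{c-\epsilon}$; but we want the cap to terminate at a \emph{fixed point}, which forces us to also work near $z_p$. Instead I prefer: use Lemma~\ref{lemma_flowing_lagrangian} to slide $L$ (and the disc) up until its boundary orbit $\sigma_{c-\epsilon}$ sits inside the equivariant Darboux chart $\mcal V_p$ around $z_p$, exactly as in the proof of Lemma~\ref{lemma_codimtwo}; by Lemma~\ref{lemma_isotopy} this does not change the Maslov index. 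Now cut at level $c - \epsilon/2$. The upper piece of the cut manifold is an open neighborhood of $z_p$ in the model $\C^n$ with weights $(c_1,\dots,c_n)$ at $z_p$, compactified; inside it, $u_p^J$ continues to a gradient disc and $\sigma_{c-\epsilon}$ caps off at the exceptional $\CP^{n-1}$-direction, giving a gradient sphere in (a neighborhood inside) the weighted model whose second fixed point lies on a codimension-two component. Computing in this explicit toric model: the gradient sphere is the $S^1$-orbit closure through $\sigma_p$, its Chern number and the weights at both poles are read off directly from $(c_1,\dots,c_n)$ by Lemma~\ref{lemma_AH_Go}, the lower disc has Maslov index $2$ by Lemma~\ref{lemma_codimtwo}, and Lemma~\ref{lemma_Chern_number} then forces $\mu([u_p^J]) = -2\sum_i c_i = -2 n_p$.

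The main obstacle I anticipate is not any single estimate but making the symplectic-cut construction genuinely global and equivariant while tracking three things simultaneously: (i) that the gluing of $u_p^J$ with the capping disc is homotopically the correct sphere class (so that Lemma~\ref{lemma_Chern_number} applies with the right Lagrangian boundary condition — note the boundary orbit must be a genuine Lagrangian orbit, which uses that $\sigma_p \subset L$ with $L$ Lagrangian); (ii) that the splitting of $u^*T\widetilde M$ into equivariant line bundles in Lemma~\ref{lemma_AH_Go} is compatible with the Darboux splitting at $z_p$, so that the "$m_N^i$" really are the weights $c_i$; and (iii) that the auxiliary fixed point introduced by the cut contributes a clean $+2$ and not some $k$-dependent correction — this is where Lemma~\ref{lemma_codimtwo} does its work, but one must check the cut is performed so that the exceptional component is genuinely codimension two and the relevant orbit is free up to it. Once these three compatibilities are pinned down, the final arithmetic collapsing $2c_1 - 2 = -2n_p$ is immediate.
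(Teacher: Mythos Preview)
Your second paragraph is essentially the paper's proof: cut at a regular level $r-\epsilon$ just \emph{below} the Lagrangian, so the exceptional divisor $M_{r-\epsilon}$ becomes a codimension-two minimal fixed component; the original upward gradient disc $u^{\bar J}_{p,+}$ to $z_p$ survives unchanged, and the short downward gradient disc $u^{\bar J}_{p,-}$ to $[\sigma_{r-\epsilon}]\in M_{r-\epsilon}$ has Maslov index $2$ by Lemma~\ref{lemma_codimtwo}; gluing gives a sphere, Lemma~\ref{lemma_AH_Go} with $k=1$, $m_N=n_p$, $m_S=1$ gives $c_1=-n_p+1$, and Lemma~\ref{lemma_Chern_number} finishes with $\mu=2(-n_p+1)-2=-2n_p$. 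That is exactly what the paper does.

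Your ``cleanest route'' in the third paragraph, however, is not cleaner and in fact does not work as written. After sliding $L$ up to level $c-\epsilon$ you propose to cut at $c-\epsilon/2$, i.e.\ \emph{between} the Lagrangian and $z_p$. This severs the gradient disc: the Lagrangian boundary $\sigma_{c-\epsilon}$ lands in the lower piece while $z_p$ lands in the upper piece, so there is no single manifold containing both the disc and a capping disc bounded by the same Lagrangian orbit. The whole point of cutting \emph{below} the Lagrangian is that $L$, the full gradient disc to $z_p$, and the new codimension-two component all live together in one cut space, so that Lemma~\ref{lemma_Chern_number} applies to a genuine sphere split by $L$. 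Drop the third paragraph and keep the second; the three compatibility worries you list at the end are all handled automatically once the cut is placed below $r$ (freeness of $\sigma_p$ gives $k=1$, the weights at $z_p$ are untouched by a cut far away, and the exceptional divisor is codimension two by construction of the symplectic cut).
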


\begin{proof}
	Put $r = H(p)$ and choose a sufficiently small positive number $\epsilon >0$ such that $r - \epsilon$ is a regular value of $H$.
	We denote by 
	\[
		\bar{M}_{r - \epsilon} := M_{\geq r - \epsilon} / \sim
	\]
	the symplectic cut \cite{L} of $M$ along $H^{-1}(r-\epsilon)$ where $p_1 \sim p_2$ if $p_1 = t\cdot p_2$ for some $t \in S^1$.
	In general, the reduced space $\bar{M}_{r-\epsilon}$ is a symplectic orbifold
	\footnote{The notion of a {\em Hamiltonian circle action} is naturally generalized to a {\em symplectic orbifold.} 
	See \cite{LT} and \cite{Go2} for more details. For the notions {\em almost complex structures} or {\em $J$-holomorphic maps} on orbifolds, 
	we refer to Chen--Ruan's paper \cite{CR}.}
	that inherits the reduced symplectic form $\bar{\omega}_{r - \epsilon}$ from $M$. 
	Also by construction, $(\bar{M}_{r-\epsilon} \backslash M_{r - \epsilon}, \bar{\omega}_{r-\epsilon})$ and $(M_{> r- \epsilon}, \omega)$ are symplectomorphic. 
	Moreover, $\bar{M}_{r-\epsilon}$ admits the induced $S^1$-action, which is also Hamiltonian with respect to $\overline{\omega}_{r - \epsilon}$, 
	whose minimal fixed point set is diffeomorphic to $M_{r-\epsilon}$ and of codimension two, see \cite[Section 1.1]{L} for the detail. 
	Let $\bar{H}_{r-\epsilon}$ be a moment map of the induced $S^1$-action. 
	We also denote by $\bar{J}$ the induced $S^1$-invariant almost complex structure on $\bar{M}_{r-\epsilon}$ compatible with $\bar{\omega}_{r-\epsilon}$.

	Since the Lagrangian $L$ remains in $\bar{M}_{r-\epsilon}$ and nothing has changed in a small neighborhood of $L$, 
	flowing the orbit $\sigma_p$ along $\bar{J}\xi$ and $-\bar{J}\xi$, respectively, we have two gradient $\bar{J}$-holomorphic discs
	\begin{enumerate}
		\item $u^{\bar{J}}_{p,+} \colon (\mathbb{D}, \pa \mathbb{D}) \to (M, L)$ intersecting $Z_p$ at $z_p$ and 
		\item $u^{\bar{J}}_{p,-} \colon (\mathbb{D}, \pa \mathbb{D}) \to (M, L)$ intersecting $M_{r-\epsilon}$ at $[\sigma_{r-\epsilon}]$
	\end{enumerate} 
	where $\sigma_{r-\epsilon}$ denotes a free $S^1$-orbit in $H^{-1}(r-\epsilon)$ obtained by flowing $\sigma_p$ down along the negative gradient flow $-J\xi$
	and $[\sigma_{r-\epsilon}]$ denotes the corresponding point in $M_{r-\epsilon}$. Note that $[\sigma_{r-\epsilon}]$ is a smooth point in $M_{r-\epsilon}$. 
	As their boundaries match up, 
	by decomposing $S^2$ into the upper hemisphere and the lower hemisphere, 
	we may consider a map  $\widehat{u}_p^{\bar{J}} := u^{\bar{J}}_{p,+} \# u^{\bar{J}}_{p,-} \colon S^2 \to \bar{M}_{r-\epsilon}$.
	
	 Since $(\bar{M}_{r-\epsilon} \backslash M_{r - \epsilon}, \bar{\omega}_{r-\epsilon})$ and $(M_{> r- \epsilon}, \omega)$ are symplectomorphic, we have 
	 \begin{equation}\label{eq_cutba}
	 	\mu \left( [{u}_{p,+}^{\bar{J}}] \right) = \mu \left( [{u}_p^{{J}}] \right).
	 \end{equation}
	 By Lemma \ref{lemma_Chern_number}, we have
	 \begin{equation}\label{eq_decomsphanddiscs}
	 	\mu  \left( [{u}_{p,+}^{\bar{J}}] \right) + \mu \left( [{u}_{p,-}^{\bar{J}}] \right) = 2 \cdot c_1 ([\widehat{u}_p^{\bar{J}} ]). 
	 \end{equation}
	 Since the minimal fixed component is diffeomorphic to the quotient space $M_{r-\epsilon}$, see Figure \ref{figure_cut}, Lemma \ref{lemma_codimtwo} yields
	 \begin{equation}\label{eq_comdi}
	 	\mu \left( [{u}_{p,-}^{\bar{J}}] \right) = 2
	\end{equation}
	On the other hand, we can apply Lemma \ref{lemma_AH_Go} to $TM|_{\widehat{u}_p^{\bar{J}}(S^2)}$ (with $k=1$, $m_N = n_p$, and $m_S = 1$) and we get 
	\[
		\langle c_1(M), [\widehat{u}_p^{\bar{J}}] \rangle = -(m_N - m_S) = - (n_p - 1) = -n_p + 1.
	\]
	Consequently, combining~\eqref{eq_cutba},~\eqref{eq_decomsphanddiscs}, and~\eqref{eq_comdi}, we have
	\[
		\mu \left( [{u}_p^{{J}}] \right) = \mu \left( [{u}_{p,+}^{\bar{J}}] \right)  = 2 \cdot c_1 \left([\widehat{u}_p^{\bar{J}} ]\right) - \mu \left( [{u}_{p,-}^{\bar{J}}] \right) = -2n_p
	\]
\end{proof}

	\begin{figure}[h]
		\scalebox{0.9}{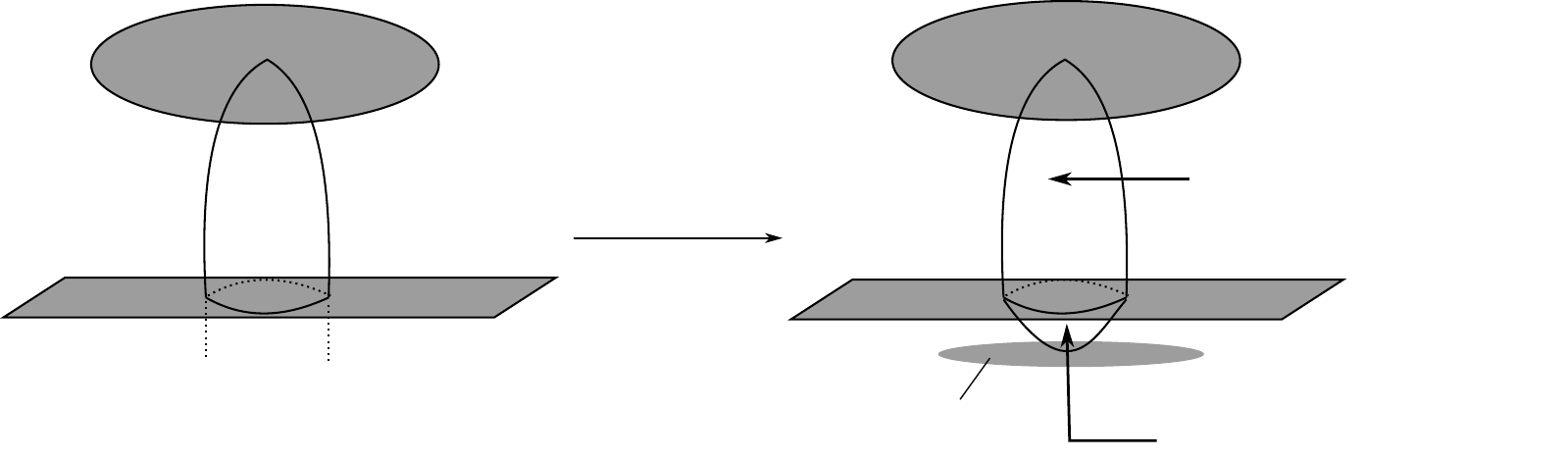}
		\caption{\label{figure_cut} Symplectic cut along $H^{-1}(r-\epsilon)$}
	\end{figure}
	\vspace{-0.2cm}
		
If the action is semifree\footnote{For a Lie group $G$ acting on a manifold $M$, we call the action is 
	{\em semifree} if the action is free on the complement of the fixed point set.}  
	near a fixed point $z \in M^{S^1}$, every non-zero weight on $T_z M$ is either $+1$ or $-1$.
In particular, twice the absolute value of the sum of 
negative (resp. positive) weights at $z$ is 
the Morse--Bott index of $H$ (resp. of $-H$) at $z$.
Therefore, we obtain the following corollary.

	\begin{corollary}\label{corollary_Maslov_index_formula}
		Let $(M,\omega)$ be a symplectic manifold equipped with an effective semifree Hamiltonian $S^1$-action with a moment map $H : M \rightarrow \R$.
		Let $u : \mathbb{D} \rightarrow M$ be a gradient holomorphic disc that contains a unique fixed point $z \in M^{S^1}$ and is 
		bounded by some $S^1$-invariant Lagrangian submanifold $L$ in a level set of $H$.
		Then the Maslov index $\mu([u])$ is the signature of $-H$ at $z$, that is, the difference between the number of negative and positive eigenvalues 
		of the Hessian of $H$ at $z$. 
		In particular, if $H$ attains the maximum at $z$, then $\mu([u])$ is the codimension of the (maximal) fixed component $Z$ containing $z$.
	\end{corollary}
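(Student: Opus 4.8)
The plan is to deduce the corollary directly from Theorem~\ref{theorem_Maslov_index_formula}, the entire content being a translation between the complex weights at a fixed point and the real eigenvalues of the Hessian of the moment map under the semifree hypothesis.

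First I would apply Theorem~\ref{theorem_Maslov_index_formula} to write $\mu([u]) = -2 n_z$, where $n_z = c_1 + \cdots + c_n$ is the sum of the weights of the $S^1$-action at the unique fixed point $z$ contained in $u(\mathbb{D})$. Since the action is semifree near $z$, each weight $c_i$ lies in $\{-1,0,1\}$, as already recorded just before the statement of the corollary. Letting $a$ and $b$ denote the number of weights equal to $-1$ and to $+1$ respectively, one gets $n_z = b - a$, hence $\mu([u]) = 2(a-b)$.

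Next I would unwind $a$ and $b$ in terms of the Hessian. By the equivariant Darboux normal form recalled in the proof of Lemma~\ref{lemma_codimtwo}, there are local coordinates $z_1, \dots, z_n$ near $z$ with $H = H(z) + \sum_{i=1}^n c_i |z_i|^2$. Writing $z_i = x_i + \sqrt{-1}\, y_i$, the Hessian of $H$ at $z$ is the diagonal real quadratic form whose value $2 c_i$ occurs with multiplicity two, once along $x_i$ and once along $y_i$. Thus the Hessian of $H$ at $z$ has exactly $2a$ negative and $2b$ positive eigenvalues, so the difference between the number of negative and positive eigenvalues of the Hessian of $H$ at $z$ --- which is by definition the signature of $-H$ at $z$ --- equals $2a - 2b = \mu([u])$. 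This gives the first assertion.

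For the final sentence, assume $H$ attains its maximum at $z$. Then $\sum c_i |z_i|^2 \le 0$ near $z$ forces every $c_i \le 0$, i.e. $b = 0$ and $\mu([u]) = 2a$; and, again by the facts in the proof of Lemma~\ref{lemma_codimtwo}, the coordinates with $c_i = 0$ parametrize the fixed component $Z$ through $z$, so $\dim Z = 2(n-a)$ and $\operatorname{codim} Z = 2a = \mu([u])$. The only point needing care throughout is the factor of two relating a single complex weight to the two-dimensional real eigenspace it contributes to the Hessian; beyond that the argument is pure bookkeeping, so I do not anticipate a genuine obstacle.
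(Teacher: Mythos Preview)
Your proposal is correct and follows essentially the same approach as the paper: apply Theorem~\ref{theorem_Maslov_index_formula} to get $\mu([u]) = -2n_z$, use semifreeness to force each weight into $\{-1,0,1\}$, and then read off the signature via the equivariant Darboux normal form. The paper's argument is just a terse one-paragraph version of what you wrote out in detail.
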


In the remaining part of the section, we apply Theorem~\ref{theorem_Maslov_index_formula} and Corollary~\ref{corollary_Maslov_index_formula} to calculate the Maslov indices of some classes in some well-known examples in \cite{Cho, CO, Aur}.

\begin{example}\label{example_clifford}
Consider $\C^n$ with the standard symplectic form $\omega_0$. Let $S^1_{a}$ denote the circle centered at the origin of radius $a \in \R_+$ in $\C$. 
For $(a_1, \cdots, a_n) \in (\R_+)^n$, the torus $T^n := S^1_{a_1} \times \cdots \times S^1_{a_n}$ is a Lagrangian submanifold of $\C^n$. 
Let $\mathbb{D}$ be the unit disc in $\C$. 
It is straightforward to see that the action on $(\C^n, \omega_0)$ given by  
\[
	t \cdot (z_1, \cdots, z_k, z_{k+1}, \cdots, z_n) = (t^{-1}z_1, \cdots, t^{-1}z_k, z_{k+1},\cdots, z_n)
\]
is Hamiltonian with a moment map
\[
	\begin{array}{cccl}
		H \colon & \C^n & \rightarrow & \R \\
			& z & \mapsto & -\frac{1}{2}(|z_1|^2 + \cdots + |z_k|^2).
	\end{array}
\]

For $p = (a_1, \cdots, a_k, \cdots, a_n) \in T^n$, consider the gradient holomorphic disc of the $S^1$-orbit containing $p$ with respect to the standard complex structure, which is $S^1$-invariant. It is explicitly written as  
\[
	\begin{array}{cccl}
		u \colon & (\mathbb{D}, \pa \mathbb{D}) & \rightarrow & (\C^n, T^n) \\
		     &  z  &  \mapsto & ({a_1z, \cdots, a_k z}, {a_{k+1},\cdots,a_n}_{n-k}).
	\end{array}
\]
Note that the fixed point set
\[
	F = \{~(z_1, \cdots, z_n) \in \C^n ~|~ z_1 = \cdots = z_k = 0 \}
\]
of the action occurs at the maximum of $H$ so that there is no positive weight at $F$.
Since the action is semifree, the sum of negative weights at $F$ is exactly the Morse--Bott index of $H$ at $F$, which is the codimension of $F$.
Therefore, the Maslov index $\mu([u])$ is equal to $2(n-k)$ by Corollary \ref{corollary_Maslov_index_formula}.
According to \cite[Theorem 5.1]{CO}, the Maslov index of $[u]$ is exactly twice the intersection number between the disc and the toric divisor, which also gives us $\mu([u]) = 2(n-k)$.
\end{example}

\begin{example}\label{example_chekanov}
Let $\CP^2$ be the projective space equipped with the standard Fubini--Study form $\omega_0$ and the standard complex structure $J_0$. To generalize the Strominger--Yau--Zaslow conjecture to Fano manifolds, Auroux came up with a conic fibration in order to construct a special Lagrangian torus fibration on $\CP^2$. He explicitly exhibited the wall-crossing phenomenon using the fibration, which leads to the quantum correction for complex variables on mirror Landau--Ginzburg models, see \cite{Aur} for more details.

To recall his construction, we restrict ourselves to an affine chart of $\CP^2$ by taking $x = X / Z, y = Y / Z$ where $[X:Y:Z]$ is the homogeneous coordinate for $\CP^2$. Then, the conic fibration $f \colon \C^2 \to \C^1$ is given by $f(x,y) = xy$. We consider the fiberwise $S^1$-action determined by
\[
t \cdot (x,y) = (t^{1}x, t^{-1}y). 
\]
A periodic Hamiltonian function is chosen as
\[
\lambda(x,y) = \frac{1}{2} \cdot \frac{|x|^2 - |y|^2}{1+|x|^2 + |y|^2}
\]
with respect to $\omega_0$. For a positive number $\varepsilon > 0$, we then have a family of Lagrangian tori  
\[
T_{r, \lambda} = \{ (x,y) \in \C^2 ~|~ |xy - \varepsilon| = r,\, \lambda(x,y) = \lambda_0 \}.
\]
parametrized by $\{(r, \lambda) ~|~ r > 0, \, \lambda \in \R \}$ as in Figure~\ref{Fig_double}. 

We focus on a point $r := \varepsilon$ and $\lambda := \lambda_0 < 0$ as an example. First, choose any point $p_1 \in T_{\varepsilon, \lambda_0}$ over the origin, any point of the green circle in Figure~\ref{Fig_double}. Putting $p_1 = \left(0, y_1 \right) \in T_{r, \lambda_0}$, the gradient disc bounded by  $T_{\varepsilon, \lambda_0}$ is 
\[
	\begin{array}{cccl}
		u_{p_1}^{J_0} \colon & (\mathbb{D}, \pa \mathbb{D}) & \rightarrow & (\C^2, T_{\varepsilon, \lambda_0}) \\
		     &  z  &  \mapsto & \left(0, y_1 z\right)
	\end{array}
\]
and it contains the fixed point $u_{p_1}^{J_0}(0) = (0,0)$. Because the weights at $(0,0)$ are $1$ and $-1$, we have $n_{p_1} = 1 + (-1) = 0$ and the Maslov index of $[u_{p_1}^{J_0}]$ is zero 
by Theorem~\ref{theorem_Maslov_index_formula}.

Next, choose any point $p_2 \in T_{r, \lambda_0}$ not over the origin. For instance, let us choose a point in the red circle in Figure~\ref{Fig_double}. Putting $p_2 = (x_2, y_2)$, the gradient disc bounded by $T_{r, \lambda_0}$ is written as
\[
	\begin{array}{cccl}
		u_{p_2}^{J_0} \colon & (\mathbb{D}, \pa \mathbb{D}) & \rightarrow & (\C^2, T_{\varepsilon, \lambda_0}) \\
		     &  z  &  \mapsto & \left(x_2 z^{-1}, y_2 z\right).
	\end{array}
\]
The center of $\mathbb{D}$ maps into the point $[X:Y:Z] = [1:0:0]$ via $u_{p_2}^{J_0}$ in terms of the homogeneous coordinate. 

Note that the $S^1$ acts on $\C^2$ as a subgroup $\{(t, -t) \in T^2 ~|~ t \in S^1 \}$ and the $T^2$-action on $\C^2$ is induced from the toric action on $\CP^2$ given by 
\[
	(t_1, t_2) \cdot [X,Y,Z] = [t_1X, t_2Y, Z] \quad (t_1, t_2) \in T^2, [X,Y,Z] \in \CP^2
\]
Since the weights at $[1,0,0]$ for the toric action is $(-1, 1)$ and $(-1, 0)$ (as elements of the dual Lie algebra of $T^2$), the weights at $[1,0,0]$ with respect to the $S^1$-action are   
\[
	\langle (1, -1),(-1, 1) \rangle = -2 \quad \text{and} \quad \langle (1,-1),(-1, 0) \rangle = -1,  
\] Therefore, we have $n_{p_2} = (-2) + (-1) = -3$ and the Maslov index of the $J_0$-holomorphic disc is six by Theorem~\ref{theorem_Maslov_index_formula}. 
\end{example}

\vspace{-0.2cm}
\begin{figure}[h]
	\scalebox{0.75}{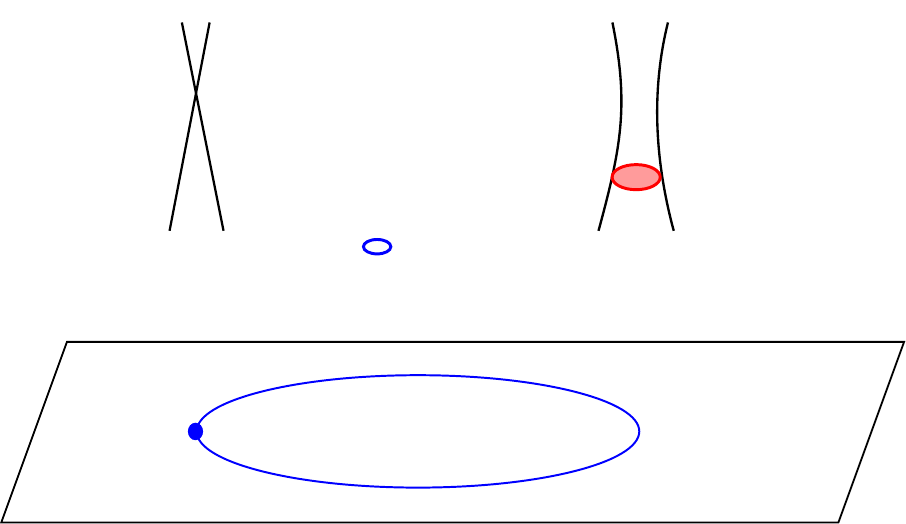}
	\bigskip
	\caption{\label{Fig_double} A family of Lagrangian tori.}	
\end{figure}
\vspace{-0.1cm}

\begin{remark}
As one can see in Example~\ref{example_chekanov}, depending on a choice of a point $p$, we can obtain gradient holomorphic discs having different Maslov indices. 
\end{remark}

\vspace{0.1cm}
%--------------------------------------------------------------------------------
\section{Gelfand--Cetiln systems}
\label{secGelfandCetilnSystems}

A {\em Gelfand--Cetlin system}, or simply a {\em GC system}, is a completely integrable system on a partial flag manifold constructed by Guillemin--Sternberg \cite{GS} as a symplectic geometric analogue of the Gelfand--Cetlin basis \cite{GC}.
As in the toric case, the image of a GC system is a convex polytope, called a {\em Gelfand--Cetlin polytope} ({\em GC polytope}).
In this section, we briefly review various notions and results on GC polytopes and GC systems in \cite{GS, CKO1} that will be needed in Section \ref{secClassificationOfMonotoneLagrangianFibers}.

%--------------------------------------------------------------------------------
\subsection{Monotone Lagrangians}
\label{ssecMonotoneLagrangians}

A symplectic manifold $(M,\omega)$ is called {\em monotone} if $[\omega] = \delta \cdot c_1(TM)$ for some positive real number $\delta$, called a 
{\em monotonicity constant}. 
A Lagrangian submanifold $L$ of $(M,\omega)$ comes with two group homomorphisms: the symplectic energy $I_\omega \colon \pi_2(M,L) \to \R$ and the Maslov index $\mu \colon \pi_2(M,L) \to \R$.
We call $L$ {\em monotone} if the symplectic area of discs bounded by $L$ is proportional to their Maslov index, that is,
\begin{equation}\label{equation_monotonicity}
	I_\omega(\beta) = c \cdot \mu(\beta) \quad \text{for every $\beta \in \pi_2(M,L)$}
\end{equation}
for some positive number $c > 0$. 
A monotone Lagrangian submanifold can exist only in a monotone symplectic manifold $(M,\omega)$. 
The monotonicity constant $\delta$ of $(M,\omega)$ is equal to $2c$ {if $(M,\omega)$ is not {\em symplectically aspherical}}\footnote{{A symplectic manifold $(M,\omega)$ is called {\em symplectically aspherical} if $\int_\alpha \omega = 0$ for every $\alpha \in \pi_2(M)$.
Note that any partial flag manifold is not symplectically aspherical.}}, see Remark 2.3 in \cite{Oh}.
In particular, we have the following. 

\begin{lemma}\label{lemma_monotonicity}
	Let $(M,\omega)$ be a monotone symplectic manifold such that $c_1(TM) = [\omega]$ and $I_\omega |_{\pi_2(M)} \neq 0$. Then a Lagrangian submanifold $L \subset (M,\omega)$ is monotone if and only if 
	\[
		2 \cdot \omega(\beta) = \mu(\beta) \quad \text{for every $\beta \in \pi_2(M,L)$.}
	\]	
\end{lemma}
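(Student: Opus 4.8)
\textbf{Proof plan for Lemma \ref{lemma_monotonicity}.}
The plan is to reduce the general monotonicity relation \eqref{equation_monotonicity} to the specific constant $c = 1/2$ by exploiting the fact that the spherical classes in $\pi_2(M)$ already force the proportionality constant. The key observation is that the boundary-gluing map $\pi_2(M) \to \pi_2(M,L)$ is compatible with both homomorphisms $I_\omega$ and $\mu$: for a spherical class $\alpha \in \pi_2(M)$, the image $\iota_*(\alpha) \in \pi_2(M,L)$ satisfies $I_\omega(\iota_*(\alpha)) = \omega(\alpha)$ and $\mu(\iota_*(\alpha)) = 2\langle c_1(TM), \alpha\rangle$. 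This last identity is standard: a disc obtained by capping off with a sphere has Maslov index equal to twice the Chern number of that sphere (compare Lemma \ref{lemma_Chern_number}, which gives $2c_1([u]) = \mu([u_+]) + \mu([u_-])$, specialized to the case where one of the two discs is constant).

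First I would assume $L$ is monotone, so $I_\omega(\beta) = c\cdot\mu(\beta)$ for all $\beta \in \pi_2(M,L)$ and some $c > 0$. Applying this to $\beta = \iota_*(\alpha)$ for $\alpha \in \pi_2(M)$ and using the two compatibility identities above together with the hypothesis $c_1(TM) = [\omega]$, I get $\omega(\alpha) = c\cdot 2\langle c_1(TM),\alpha\rangle = 2c\cdot\omega(\alpha)$ for every $\alpha \in \pi_2(M)$. Since $I_\omega|_{\pi_2(M)} \neq 0$, there exists $\alpha$ with $\omega(\alpha) \neq 0$, and dividing gives $2c = 1$, i.e. $c = 1/2$. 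Hence $I_\omega(\beta) = \tfrac12\mu(\beta)$, equivalently $2\omega(\beta) = \mu(\beta)$, for all $\beta\in\pi_2(M,L)$. The converse direction is immediate: if $2\omega(\beta) = \mu(\beta)$ for all $\beta$, then \eqref{equation_monotonicity} holds with $c = 1/2 > 0$, so $L$ is monotone.

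The only real point requiring care is the compatibility of $\mu$ with the gluing map $\pi_2(M) \to \pi_2(M,L)$, namely $\mu(\iota_*(\alpha)) = 2\langle c_1(TM),\alpha\rangle$; this is where I would either cite the standard reference (e.g. the discussion preceding Lemma \ref{lemma_Chern_number}, or \cite{Oh} Remark 2.3 directly, which already records that the monotonicity constants satisfy $\delta = 2c$) or give the one-line argument via Lemma \ref{lemma_Chern_number} with a constant disc. Everything else is elementary arithmetic. In fact, since the statement "$\delta = 2c$ when $(M,\omega)$ is not symplectically aspherical" is quoted verbatim from \cite{Oh} just before the lemma, the cleanest write-up simply substitutes $\delta = 1$ (forced by $c_1(TM) = [\omega]$ and $I_\omega|_{\pi_2(M)}\neq 0$, which guarantees $(M,\omega)$ is not symplectically aspherical) into that relation to conclude $c = 1/2$.
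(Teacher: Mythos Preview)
Your proposal is correct and, in its final paragraph, identifies exactly the paper's approach: the lemma is stated without a standalone proof, as an immediate consequence of the sentence preceding it (that $\delta = 2c$ whenever $(M,\omega)$ is not symplectically aspherical, citing \cite{Oh}, Remark 2.3), together with $\delta = 1$ from $c_1(TM) = [\omega]$ and the non-asphericality guaranteed by $I_\omega|_{\pi_2(M)} \neq 0$. Your more explicit argument via $\mu(\iota_*\alpha) = 2\langle c_1(TM),\alpha\rangle$ simply unpacks what lies behind that citation.
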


%--------------------------------------------------------------------------------
\subsection{Partial flag manifolds}
\label{ssecPartialFlagManifolds}

For a positive integer $n$, consider a sequence of integers
	\begin{equation}\label{nidef}
		0 = n_0 < n_1 < n_2 < \cdots <n_r < n_{r+1} = n.
	\end{equation}
The {\em partial flag manifold} $\F(n_1, \cdots, n_r; n)$ is the space of nested sequences of complex vector subspaces of $\C^n$ defined by
	\[
		\F(n_1, \cdots, n_r; n) = \{V_{\bullet} := (0 \subset V_1 \subset \cdots \subset V_r \subset \C^n) ~|~ \dim_{\C} V_i = n_i \}.
	\]
Since the unitary group $U(n)$ acts on $\F(n_1, \cdots, n_r; n)$ transitively, it becomes a homogeneous manifold diffeomorphic to 
\[
	U(n) / (U(k_1) \times \cdots \times U(k_{r+1})), \quad k_i = n_i - n_{i-1}.
\]
One well-known fact is that any partial flag manifold is a complete Fano variety and carries a canonical projective embedding called the {\em Pl\"{u}cker embedding}.

To construct a GC system on $\F(n_1, \cdots, n_r; n)$, it is convenient to think of $\F(n_1, \cdots, n_r; n)$ as a co-adjoint orbit of $U(n)$.
Let $\frak{u}(n)$ be the Lie algebra of $U(n)$.  
Using the Killing form, the dual $\frak{u}(n)^*$ of $\frak{u}(n)$ with the co-adjoint action 
can be equivariantly identified with the set $\mcal{H}_n$ of $(n \times n)$ Hermitian matrices with the conjugate $U(n)$-action.
We choose any non-increasing sequence $\lambda := (\lambda_1, \cdots, \lambda_n)$ of real numbers such that
	\begin{equation}\label{lambdaidef}
		\lambda_1 = \cdots = \lambda_{n_1} > \lambda_{n_1 + 1} = \cdots = \lambda_{n_2} > \cdots > \lambda_{n_r +1} =
		\cdots = \lambda_{n_{r+1}} (= \lambda_n)
	\end{equation}
where $\{n_0, \cdots, n_r\}$ is given in \eqref{nidef}. 
The corresponding co-adjoint orbit is then explicitly given by 
\begin{align*}
	\mcal{O}_\lambda := \{ U \cdot \mathrm{diag}(\lambda_1, \cdots, \lambda_n) \cdot \overline{U^T} ~\colon~ U \in U(n) \} 
	= \left\{ H \in \mcal{H}_n : \mathrm{Spec}(H) = \{\lambda_1, \cdots, \lambda_n\} \right\}, 
\end{align*}
which is diffeomorphic to $U(n) / (U(k_1) \times \cdots \times U(k_{r+1})) \cong \F(n_1, \cdots, n_r; n)$. See \cite[page 51]{Au} for details. 

Any co-adjoint orbit $\mcal{O}_\lambda$ admits a $U(n)$-invariant K\"{a}hler form $\omega_\lambda$, called the {\em Kirillov--Kostant--Souriau form}.
With respect to $\omega_\lambda$,    
the above conjugate $U(n)$-action becomes Hamiltonian\footnote{For a general definition of a {\em Hamiltonian $G$-action} for any Lie group $G$, see \cite[Chap. III-1]{Au}.} 
and the inclusion map $\mu \colon \mcal{O}_\lambda \hookrightarrow \frak{u}(n)^* \cong \mcal{H}_n$ becomes a moment map for the action.
The maximal torus $T^n$ of $U(n)$, consisting of the diagonal matrices in $U(n)$, acts 
on $(\mcal{O}_\lambda, \omega_\lambda)$ in a Hamiltonian fashion with a moment map
\[
		(a_{ij}) \mapsto 
	\begin{pmatrix}
		a_{11} & 0 & \cdots & 0 \\
		0 & a_{22} & \cdots & 0 \\
		\vdots & \vdots & \ddots & \vdots \\
		0 & 0 & \cdots & a_{nn} \\
	\end{pmatrix} \mapsto (a_{11}, \cdots, a_{nn}) \in \R^n \cong \frak{t}^*.
\]
The following tells us how one should choose $\lambda$ so that $\omega_\lambda$ becomes a monotone symplectic form on $\mcal{O}_\lambda$. 

\begin{proposition}[See p.653-654 in \cite{NNU} for example]\label{proposition_monotone_lambda}
	The symplectic form $\omega_\lambda$ on $\mathcal{O}_\lambda$ satisfies
	\[
		c_1(T\mathcal{O}_\lambda) = [\omega_\lambda]
	\]
	if and only if
\[
  \lambda = (\underbrace{n-n_1, \cdots}_{k_1} ~,
  \underbrace{n-n_1-n_2, \cdots}_{k_2} ~, \cdots ~, \underbrace{n-n_{r-1}-n_r, \cdots}_{k_r} ~, \underbrace{-n_r, \cdots, -n_r}_{k_{r+1}} )
  +  (\underbrace{m, \cdots, m}_{n}),
  \label{canonical}
\]
for some $m \in \R$. 
\end{proposition}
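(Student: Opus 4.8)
The plan is to make both classes $c_1(T\mathcal{O}_\lambda)$ and $[\omega_\lambda]$ completely explicit in $H^2(\mathcal{O}_\lambda;\R)$ in a common basis, and then read off the linear condition on $\lambda$ forcing them to coincide. Throughout I identify $\mathcal{O}_\lambda$ with $\F(n_1,\dots,n_r;n)$, I set $k_i=n_i-n_{i-1}=\mathrm{rk}\,E_i$, and I use the tautological bundles $0=V_0\subset V_1\subset\cdots\subset V_r\subset V_{r+1}=\underline{\C^n}$ together with the subquotients $E_i:=V_i/V_{i-1}$ and the classes $x_i:=c_1(E_i)$. From $0\to V_r\to\underline{\C^n}\to E_{r+1}\to 0$ one gets the single relation $x_1+\cdots+x_{r+1}=0$, and since $c_1(\det V_i)=x_1+\cdots+x_i$ for $i\le r$ is a unimodular change of basis, $\{x_1,\dots,x_r\}$ is a $\Z$-basis of $H^2(\F;\Z)$; in particular $\sum_\ell a_\ell x_\ell=\sum_\ell b_\ell x_\ell$ holds iff $a_\ell-b_\ell$ is independent of $\ell$.

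First I would compute $c_1(T\F)$. Using the standard identification $T\F\cong\bigoplus_{1\le i<j\le r+1}\mathrm{Hom}(E_i,E_j)$ and $c_1(E_i^*\otimes E_j)=k_i\,x_j-k_j\,x_i$, I sum over all pairs; the class $x_\ell$ picks up $+k_i$ from each pair $(i,\ell)$ with $i<\ell$ and $-k_j$ from each pair $(\ell,j)$ with $j>\ell$, giving coefficient $\sum_{i<\ell}k_i-\sum_{j>\ell}k_j=n_{\ell-1}-(n-n_\ell)$. Thus
\[
 c_1(T\F)=\sum_{\ell=1}^{r+1}\bigl(n_{\ell-1}+n_\ell-n\bigr)\,x_\ell .
\]

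Next I would pin down $[\omega_\lambda]$. Writing $\lambda$ as blocks of constant values $\mu_1>\cdots>\mu_{r+1}$ with $\mu_i$ repeated $k_i$ times, I claim
\[
 [\omega_\lambda]=-\sum_{\ell=1}^{r+1}\mu_\ell\,x_\ell .
\]
This depends on $\lambda$ only modulo adding $(m,\dots,m)$, as it must, since such a shift is translation by a central element of $\frak{u}(n)$ and leaves $\omega_\lambda$ unchanged. I would justify the claim by pairing against a generating set of $H_2(\F;\Z)$ by $T^n$-invariant $2$-spheres: for the generator $\beta_i$ joining the two diagonal $T^n$-fixed points that differ by interchanging an index of block $i$ with one of block $i{+}1$, one has $\langle x_j,\beta_i\rangle=\delta_{i+1,j}-\delta_{i,j}$, while the Archimedes-type identity already used in Section~\ref{secGradientJHolomorphicDisks} (the symplectic area of an invariant sphere equals the difference of moment-map values at its two fixed points) gives $\langle[\omega_\lambda],\beta_i\rangle=\mu_i-\mu_{i+1}$; these values determine the class. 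Equivalently one may invoke the Borel--Weil/prequantization correspondence between integral coadjoint orbits and the line bundles $\mathcal{L}_\lambda$ with $c_1(\mathcal{L}_\lambda)=[\omega_\lambda]$. I would sanity-check the formula on $\F(1;2)=\CP^1$, where $\omega_\lambda$ has area $\mu_1-\mu_2$, and on $\CP^{n-1}=\F(1;n)$, where it reproduces $c_1(T\CP^{n-1})=nH$.

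Finally, comparing the two displayed formulas and using that $\sum_\ell x_\ell=0$, the equality $c_1(T\F)=[\omega_\lambda]$ holds if and only if $\mu_\ell+n_{\ell-1}+n_\ell-n$ is independent of $\ell$, i.e. if and only if $\mu_\ell=n-n_{\ell-1}-n_\ell+m$ for some $m\in\R$; unpacking the blocks, this is exactly the sequence in the statement of Proposition~\ref{proposition_monotone_lambda}, and since $\mu_\ell-\mu_{\ell+1}=k_\ell+k_{\ell+1}>0$ the required strict inequalities between consecutive blocks of $\lambda$ hold automatically. The one genuinely delicate step is the identification $[\omega_\lambda]=-\sum_\ell\mu_\ell x_\ell$: getting the precise correspondence between the Kirillov--Kostant--Souriau form and the weight $\lambda$ — including sign and normalization — correct is essential, since any error there shifts which $\lambda$ gets singled out. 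By contrast the computation of $c_1(T\F)$ is routine once the decomposition $T\F\cong\bigoplus_{i<j}\mathrm{Hom}(E_i,E_j)$ is granted.
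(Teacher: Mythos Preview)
The paper does not actually prove this proposition: it is stated with the parenthetical citation ``See p.653--654 in \cite{NNU}'' and no argument is given. So there is no proof in the paper to compare your proposal against.

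Your argument is a correct and self-contained computation that supplies what the paper omits. The two ingredients---the decomposition $T\F\cong\bigoplus_{i<j}\mathrm{Hom}(E_i,E_j)$ giving $c_1(T\F)=\sum_\ell(n_{\ell-1}+n_\ell-n)x_\ell$, and the identification $[\omega_\lambda]=-\sum_\ell\mu_\ell x_\ell$---are exactly what one finds in the cited pages of \cite{NNU}, so in effect you have reconstructed the argument the paper is pointing to. Your own caveat is well placed: the only point requiring care is the sign and normalization in $[\omega_\lambda]=-\sum_\ell\mu_\ell x_\ell$, and your sanity checks on $\CP^1$ and $\CP^{n-1}$ (together with the invariant-sphere pairing) adequately pin this down.
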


%--------------------------------------------------------------------------------
\subsection{Gelfand--Cetlin systems}
\label{ssecGelfandCetlinSystems}

For a given $\lambda$ in \eqref{lambdaidef},
Guillemin and Sternberg \cite{GS} constructed a completely integrable system 
\begin{equation}\label{equation_GCdef}
	\Phi_\lambda := (\Phi_\lambda^{i,j})_{2 \leq i +j \leq n} \colon \mcal{O}_\lambda \rightarrow \R^{\frac{n(n-1)}{2}}
\end{equation}
on $(\mcal{O}_\lambda, \omega_\lambda)$
where $\Phi_\lambda^{i,j} \colon \mcal{O}_\lambda \rightarrow \R$ assigns the $i$-th largest eigenvalue of the leading 
principal submatrix of size $(i+j-1)$ for each element of $\mcal{O}_\lambda$. It is called a \emph{Gelfand--Cetlin system}, a {\em GC system} for short.

The image of $\Phi_\lambda$ is a convex polytope, called a {\em Gelfand--Cetlin polytope} and denoted by $\Delta_\lambda$, of dimension $\dim_\C \mcal{O}_\lambda$ and it is precisely 
the set $\{ (u_{i,j})_{2 \leq i+j \leq n} \} \subset \R^{\frac{n(n-1)}{2}}$ given by 

\begin{equation}
\begin{alignedat}{17}
  \lambda_1 &&&& \lambda_2 &&&& \lambda_3 && \cdots && \lambda_{n-1} &&&& \lambda_n  \\
  & \uge && \dge && \uge && \dge &&&&&& \uge && \dge & \\
  && u_{1,n-1} &&&& u_{2, n-2} &&&&&&&& u_{n-1, 1} && \\
  &&& \uge && \dge &&&&&&&& \dge &&& \\
  &&&& u_{1, n-2} &&&&&&&& u_{n-2, 1} &&&& \\
  &&&&& \uge &&&&&& \dge &&&&& \\
  &&&&&& \dndots &&&& \updots &&&&&& \\
  &&&&&&& \uge && \dge &&&&&&& \\
  &&&&&&&& u_{1,1} &&&&&&&&&
\end{alignedat}
\label{equation_GC-pattern}
\vspace{0.2cm}
\end{equation}
This pattern comes from the min-max principle. 
We give a list of properties of GC systems.  
\begin{theorem}\label{theorem_GC_generalfacts}
	Let $\Phi_\lambda$ be the Gelfand--Cetlin system on $(\mcal{O}_\lambda, \omega_\lambda)$ given in \eqref{equation_GCdef}.
	Let $\mathring{\mcal{O}}_\lambda := \Phi_\lambda^{-1}(\mathring{\Delta}_\lambda)$ where $\mathring{\Delta}_\lambda$ denotes the set of interior points of 
	$\Delta_\lambda$.
	\begin{enumerate}
		\item \cite[page 113]{GS} $\Phi_\lambda$ is a completely integrable system in the sense that $\Phi_\lambda$ is smooth on an open dense subset $\mcal{U} \subset \mcal{O}_\lambda$ 
		containing $\mathring{\mcal{O}}_\lambda$ satisfying the following$\colon$
		\begin{itemize}
			\item The components in $\{ \Phi_\lambda^{i,j} \}_{2 \leq i +j \leq n}$ Poisson commute with each other on $\mcal{U}$.
			\item The differentials are linearly independent in each cotangent space at $x \in \mathring{\mcal{O}}_\lambda$.
		\end{itemize}
		
		\item \cite[Proposition 5.3]{GS} The Hamiltonian vector field of each component $\Phi_\lambda^{i,j}$ generates a Hamiltonian circle action on the subset of $\mcal{O}_\lambda$ on which $\Phi_\lambda^{i,j}$ is smooth.
		\item  \cite[Theorem 5.12]{CKO1} For any $\textbf{\textup{u}} \in \Delta_\lambda$, the fiber $\Phi_\lambda^{-1}(\textbf{\textup{u}})$ is an isotropic submanifold of $(\mcal{O}_\lambda, \omega_\lambda)$.
		\item \cite[Theorem 7.9]{CKO1} For a $k$-dimensional face $F$ of $\Delta_\lambda$ and a point $\textbf{\textup{u}}$ in the relative interior $\mathring{F}$ of $F$, the fiber $\Phi_\lambda^{-1}(\textbf{\textup{u}})$ 
		is diffeomorphic to 
		$(S^1)^k \times Y_F$ for some closed manifold $Y_F$ such that 
		\[
			\pi_1(Y_F) = \pi_2(Y_F) = 0.
		\] 
		\item \cite[Corollary 7.12]{CKO1} A fiber over a point in $\mathring{\Delta}_\lambda$ is a Lagrangian torus. 
	\end{enumerate}
\end{theorem}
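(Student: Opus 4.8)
The plan is to split Theorem~\ref{theorem_GC_generalfacts} into two groups. Items (1) and (2) belong to the classical Guillemin--Sternberg theory, and I would prove them by the Thimm trick together with the elementary fact that a simple eigenvalue function of a Hermitian matrix generates a circle action; items (3)--(5) are the refined fiberwise statements of \cite{CKO1}, which I would extract from the block/ladder combinatorics of the GC pattern~\eqref{equation_GC-pattern}.

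For (1) and (2) I would work inside $\mcal{H}_n \cong \frak{u}(n)^*$ with its Lie--Poisson structure, in which $\mcal{O}_\lambda$ is a symplectic leaf carrying $\omega_\lambda$. For each $m$ with $1 \le m \le n-1$, let $\pi_m \colon \mcal{H}_n \to \mcal{H}_m$ be the leading principal $m \times m$ block map; it is dual to the inclusion $\frak{u}(m) \hookrightarrow \frak{u}(n)$ as the top-left block, hence a Poisson map. Up to reindexing, the components $\{ \Phi_\lambda^{i,j} : i + j - 1 = m \}$ are the eigenvalue functions of the $m\times m$ Hermitian matrix, pulled back by $\pi_m$. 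On $\mcal{H}_m$ the elementary symmetric functions of the spectrum are $\mathrm{Ad}$-invariant Casimirs, so they Poisson--commute among themselves and, on the open dense locus of simple spectrum, so do the eigenvalue functions; pulling back by the Poisson map $\pi_m$ preserves this. For two levels $m < m'$, the level-$m$ functions, being pulled back from $\mcal{H}_m$, are $U(m)$-invariant, while the level-$m'$ eigenvalue functions are $U(m')$-invariant hence $U(m)$-invariant, so the Thimm argument makes the two families Poisson--commute. Each $\Phi_\lambda^{i,j}$ is smooth exactly where the corresponding eigenvalue is simple, an open condition, so the simultaneous-simplicity locus $\mcal{U}$ is open and dense and contains $\mathring{\mcal{O}}_\lambda$; the independence of the differentials of the non-constant components on $\mathring{\mcal{O}}_\lambda$ is the assertion that $\Phi_\lambda|_{\mathring{\mcal{O}}_\lambda}$ is a Lagrangian fibration and can be read off from action--angle coordinates, or deduced a posteriori from (2). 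For (2), the Hamiltonian flow of a simple eigenvalue function $\lambda_i$ on $\mcal{H}_m$ is conjugation by $e^{\sqrt{-1}\,t P_i}$, where $P_i$ is the rank-one spectral projection onto the $i$-th eigenline; since $e^{2\pi\sqrt{-1}\,P_i}$ is the identity, this flow is $2\pi$-periodic, hence a circle action, and pulling back by $\pi_m$ yields the circle action on the smooth locus of $\Phi_\lambda^{i,j}$ in $\mcal{O}_\lambda$.

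For (3)--(5) I would use the face combinatorics of \cite{CKO1}. A face $F$ of $\Delta_\lambda$ is cut out of the pattern~\eqref{equation_GC-pattern} by promoting a maximal admissible collection of the diagonal inequalities to equalities; this partitions the triangular array into blocks on which the entries are forced equal. Over $\mathbf{u} \in \mathring{F}$, the $k := \dim F$ components of $\Phi_\lambda$ that are not frozen are smooth near the fiber, their Hamiltonian flows generate a free $(S^1)^k$-action on $\Phi_\lambda^{-1}(\mathbf{u})$ by (2), and the quotient is assembled from the frozen blocks. Locally a maximal block in which a value is repeated $\ell$ times degenerates, in equivariant Darboux coordinates, to $\C^\ell$ with the eigenvalue function becoming $\tfrac12\sum |z_i|^2$, whose nonzero level set is $S^{2\ell-1}$; iterating over the blocks exhibits $\Phi_\lambda^{-1}(\mathbf{u}) \cong (S^1)^k \times Y_F$ with $Y_F$ an iterated bundle of odd-dimensional spheres, so $\pi_1(Y_F) = \pi_2(Y_F) = 0$, which is (4). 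Isotropy (3) then follows inductively from this description: each $S^1$- or sphere-direction is attached along a subspace on which $\omega_\lambda$ restricts to zero (for the $S^1$ directions because the fiber lies in a common level set of Poisson-commuting Hamiltonians, for the sphere directions from the local $\C^\ell$ model), so the whole fiber is isotropic. Finally, at an interior point no block is frozen, so $k = \tfrac12\dim_\R \mcal{O}_\lambda$ and $Y_F$ is a point; an isotropic submanifold of half the dimension is Lagrangian, giving (5).

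The parts (1)--(2) are essentially routine: the Poisson commutativity is the textbook Thimm/Guillemin--Sternberg computation, and the circle-action claim is the eigenvalue-rotation formula above. The genuine work, and the main obstacle, is (3)--(4): pinning down the exact diffeomorphism type of the non-torus fibers requires the ladder-diagram bookkeeping of \cite{CKO1} --- tracking which block of equal pattern entries contributes which odd-sphere bundle factor, verifying that the resulting tower is a closed manifold with $\pi_1 = \pi_2 = 0$, and checking at each stage that the new direction is $\omega_\lambda$-isotropic --- after which (5) is immediate from (3), (4), and the dimension count.
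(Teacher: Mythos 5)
The paper offers no proof of Theorem~\ref{theorem_GC_generalfacts} at all: it is a summary of quoted results, with (1)--(2) attributed to \cite{GS} and (3)--(5) to \cite{CKO1}, so the only fair comparison is with those sources and with the fragments the paper itself re-derives later. Your treatment of (1)--(2) is exactly the classical route: the leading-principal-block maps are Poisson, the eigenvalue functions are invariant Casimir-type functions on each $\mcal{H}_m$, Thimm's trick gives commutativity across levels, and the flow of a simple eigenvalue is conjugation by $e^{\sqrt{-1}\,tP_i}$ with $P_i$ the spectral projection, hence $2\pi$-periodic; this is the same mechanism the paper re-proves in restricted form for partial traces in Proposition~\ref{proposition_smooth_Ham} and Lemma~\ref{lemma_Ham_action}. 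For (3)--(5) you, like the paper, ultimately defer to the ladder-diagram/iterated-bundle machinery of \cite{CKO1}, and your sketch of the $(S^1)^k\times Y_F$ structure and the dimension count giving (5) is consistent with it.

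One caution so that the sketch is not mistaken for an independent proof of (3): the argument that the fiber is isotropic ``because it lies in a common level set of Poisson-commuting Hamiltonians'' only applies where the components of $\Phi_\lambda$ are smooth and their Hamiltonian vector fields span the fiber directions. Over lower-dimensional faces the relevant eigenvalue functions are precisely \emph{not} smooth (eigenvalues collide), and the local model you invoke ($\C^\ell$ with $\tfrac12\sum|z_i|^2$) does not by itself produce the symplectic normal form along the whole fiber; establishing smoothness of the fiber and isotropy at these degenerate points is the actual content of \cite[Theorem 5.12]{CKO1}. Since you explicitly flag (3)--(4) as requiring that bookkeeping, the proposal is acceptable as a citation-level account, matching what the paper does, but the isotropy step as written would not stand on its own.
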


Now, let us investigate when components and their linear combinations of a GC system are smooth.
Since each component of $\Phi_\lambda$ is the restriction of an eigenvalue function on a leading principal submatrix contained in $\mcal{H}_m$ for some $m$ with $1 \leq m \leq n$, it is enough to study the smoothness of eigenvalue functions on $\mcal{H}_m$. 
For each $i, j \geq 1$ with $i + j -1 = m$, let
\begin{align}
	\begin{aligned}
		\mcal{U}^{i,j}_- &:= \{ A \in \mcal{H}_m ~\colon~ \text{the $i$-th largest eigenvalue is \emph{strictly} bigger than the $(i+1)$-th largest eigenvalue.}\}, \\
		\mcal{U}^{i,j}_+ &:= \{ A \in \mcal{H}_m ~\colon~ \text{the $(i-1)$-th largest eigenvalue is \emph{strictly} bigger than the $i$-th largest eigenvalue.}\}.
	\end{aligned} \label{equation_U-} 
\end{align} 
Denote by  $\Phi^{i,j} \colon \mcal{H}_m \to \R$ the function which assigns the $i$-th largest eigenvalue of each element of $\mcal{H}_m$. 
For each $(i,j)$, define a {\em partial trace} by
\begin{equation}\label{equ_partiaalttrr}
\Psi^{i,j} := \Phi^{1,m} + \cdots + \Phi^{i,j} \colon \mcal{H}_{m} \to \R. 
\end{equation}
In particualr, $\Psi^{m,1} = \Phi^{1,m} + \cdots + \Phi^{m,1}$ is a trace function and hence it is smooth on $\mcal{H}_m$. 

\begin{proposition}\label{proposition_smooth_Ham}
	For each $(i, j) \in (\Z_+)^2$ with $i + j -1 = m$, the partial trace $\Psi^{i,j}\colon \mcal{H}_{m} \to \R$ is smooth on $\mcal{U}^{i,j}_-$.	
\end{proposition}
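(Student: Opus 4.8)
The plan is to realize the partial trace $\Psi^{i,j}$ as the composition of a smooth matrix-valued map with a genuinely smooth symmetric function of eigenvalues. Concretely, on the open set $\mcal{U}^{i,j}_-$ the top $i$ eigenvalues are separated from the remaining ones by a spectral gap, so the spectral (Riesz) projection onto the sum of the eigenspaces for those $i$ largest eigenvalues varies smoothly. Writing $P_A$ for this rank-$i$ orthogonal projection, the partial trace is
\[
\Psi^{i,j}(A) = \operatorname{tr}(A\,P_A), \qquad A \in \mcal{U}^{i,j}_-,
\]
which exhibits $\Psi^{i,j}$ as the trace of a product of two smooth $\mcal{H}_m$-valued maps, hence smooth.

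First I would make precise the smoothness of $A \mapsto P_A$ via the holomorphic functional calculus: for $A \in \mcal{U}^{i,j}_-$ choose a small circle $\Gamma$ in $\C$ enclosing exactly the $i$ largest eigenvalues of $A$ and none of the others; by the strict inequality defining $\mcal{U}^{i,j}_-$ and continuity of eigenvalues, this same $\Gamma$ works for all $B$ in a neighborhood of $A$, and
\[
P_B = \frac{1}{2\pi i}\oint_\Gamma (z I - B)^{-1}\, dz
\]
depends smoothly (indeed real-analytically) on $B$ because $B \mapsto (zI - B)^{-1}$ is smooth on the set where $z$ is not an eigenvalue and the integral over the fixed compact contour preserves smoothness. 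This is the standard Kato perturbation argument, and I would cite a reference (e.g. Kato's book) rather than reprove it.

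Next I would verify the identity $\Psi^{i,j}(A) = \operatorname{tr}(A P_A)$. Diagonalizing $A$ in an orthonormal eigenbasis $v_1,\dots,v_m$ with eigenvalues $\mu_1 \ge \cdots \ge \mu_m$, the projection $P_A$ is exactly the orthogonal projection onto $\operatorname{span}(v_1,\dots,v_i)$ precisely because $\mu_i > \mu_{i+1}$ pins down which eigenvectors lie inside the contour; then $\operatorname{tr}(A P_A) = \sum_{k=1}^i \mu_k = \Phi^{1,m}(A) + \cdots + \Phi^{i,j}(A) = \Psi^{i,j}(A)$ by definition \eqref{equ_partiaalttrr}. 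Since $A \mapsto A$ and $A \mapsto P_A$ are smooth $\mcal{H}_m$-valued maps on $\mcal{U}^{i,j}_-$ and $(X,Y)\mapsto \operatorname{tr}(XY)$ is a smooth (bilinear) function, the composition $\Psi^{i,j}$ is smooth on $\mcal{U}^{i,j}_-$, completing the proof.

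The only genuine obstacle is making the smoothness of the spectral projection fully rigorous; everything else is formal. Note that individual eigenvalue functions $\Phi^{k,m}$ are not smooth where eigenvalues collide, so one cannot argue termwise — the point is precisely that the \emph{sum} of the top $i$ eigenvalues, equivalently $\operatorname{tr}(AP_A)$, only sees the decomposition $\C^m = \operatorname{im}P_A \oplus \ker P_A$ and is insensitive to crossings \emph{within} the top block or \emph{within} the bottom block, which is exactly what $\mcal{U}^{i,j}_-$ guarantees. An alternative, if one prefers to avoid contour integrals, is to write $P_A = \psi(A)$ for a fixed smooth (even polynomial, locally) function $\psi$ equal to $1$ near $\{\mu_1,\dots,\mu_i\}$ and $0$ near $\{\mu_{i+1},\dots,\mu_m\}$, chosen uniformly on a neighborhood; this reduces smoothness of $A \mapsto P_A$ to smoothness of the continuous functional calculus $A \mapsto \psi(A)$ for a fixed $\psi$, which is immediate.
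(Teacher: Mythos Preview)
Your proof is correct, and it takes a genuinely different route from the paper's.

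The paper argues purely algebraically: it forms the polynomial
\[
F(x,A)=\prod_{\{i_1<\cdots<i_k\}\subset [m]}\Bigl(x-\bigl(\Phi^{i_1,m+1-i_1}(A)+\cdots+\Phi^{i_k,m+1-i_k}(A)\bigr)\Bigr),\qquad k=i,
\]
whose coefficients are symmetric polynomials in the eigenvalues and hence smooth functions of $A$. On $\mcal{U}^{i,j}_-$ the value $x=\Psi^{i,j}(A)$ is the unique maximal root of $F(\,\cdot\,,A)$ (the sum of the $i$ largest eigenvalues strictly dominates every other $i$-term eigenvalue sum precisely because $\mu_i>\mu_{i+1}$), hence a simple root, and the implicit function theorem yields smoothness.

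Your approach instead identifies $\Psi^{i,j}(A)=\operatorname{tr}(A\,P_A)$ with $P_A$ the Riesz spectral projection onto the top $i$ eigenspaces, and invokes the standard Kato-type fact that $A\mapsto P_A$ is real-analytic across a spectral gap. This is more conceptual and immediately explains \emph{why} only the gap between $\mu_i$ and $\mu_{i+1}$ matters, whereas the paper's argument is more elementary and self-contained (no contour integrals, no functional calculus---just symmetric functions and the implicit function theorem). Both arguments make the same essential observation you highlighted: the sum of the top block is insensitive to eigenvalue crossings within either block.
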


\begin{proof}
	For any integer $k$ with $1 \leq k \leq m$, consider the $k$-th elementary symmetric polynomial 
	\[
		s_k (\Phi^{1,m}, \Phi^{2,m-1}, \cdots, \Phi^{m,1}) := \sum_{\{ i_1 < \cdots < i_k \} \subset [m]} \Phi^{i_1, m +1 - i_1} \, \cdots \, \Phi^{i_k, m +1 - i_k}, 
	\]	
	in terms of $m$ variables $\Phi^{1,m}, \cdots, \Phi^{m,1}$. 
	Note that each $s_k (\Phi^{1,m}, \Phi^{2,m-1}, \cdots, \Phi^{m,1})$ gives us  
	the $k$-th coefficient (up to sign) of the characteristic polynomial of each element in $\mcal{H}_m$ and hence it is smooth on $\mcal{H}_m$. 
	Since any symmetric polynomial in the variables $\Phi^{1,m}, \cdots, \Phi^{m,1}$ can be expressed as a polynomial in the $s_1, \cdots, s_m$ variables,
	every symmetric polynomial in the variables $\Phi^{1,m}, \cdots, \Phi^{m,1}$ is smooth on $\mcal{H}_m$.
	 
	Define 
	\[
		\begin{array}{cccl}
			{F}\colon   & \R \times \mcal{H}_m & \rightarrow & \R \\
						& (x, A) & \mapsto & \displaystyle \prod_{\{i_1 < \cdots < i_k\} \subset [m]} \left(x - ( \Phi^{i_1, m +1 - i_1}(A) +  \cdots + \Phi^{i_k, m +1 - i_k}(A) ) \right),
		\end{array}
	\]
	which is smooth because each coefficient is a symmetric polynomial in the variables $\Phi^{1,m}, \cdots, \Phi^{m,1}$.	
	 Observe that $x = \Psi^{i,j}(A) = \Phi^{1,m}(A) + \cdots + \Phi^{i,j}(A)$ is a zero of $F$. 
	 For any $A \in \mcal{U}^{i,j}_-$, $x = \Psi^{i,j}(A)$ is a simple root since it is the unique maximal root of $F$ so that $\frac{\pa F}{\pa x}$ does not vanish at $(x,A)$. 
	 By the implicit function theorem, $\Psi^{i,j}$ is smooth at $A$.  
	 Since this argument holds for every $A \in \mcal{U}_-^{i,j}$, $\Psi^{i,j}$ is smooth on $\mcal{U}_-^{i,j}$.
\end{proof}

\begin{corollary}\label{corollary_smooth}
	For each $(i, j) \in (\Z_+)^2$ with $i + j -1 = m$, $\Phi^{i,j}$ is smooth on $\mcal{U}^{i,j}_- \cap \mcal{U}^{i,j}_+$.
\end{corollary}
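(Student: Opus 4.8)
The plan is to obtain the smoothness of $\Phi^{i,j}$ by writing it as a difference of two consecutive partial traces, each of which is handled by Proposition \ref{proposition_smooth_Ham}, and then using that a difference of smooth functions is smooth. So the proof is essentially a bookkeeping argument built on top of the preceding proposition.

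First I would record the combinatorics of the indices. Abbreviating $\Phi^{k} := \Phi^{k,\,m+1-k}$ for $1 \le k \le m$, we have $\Psi^{i,j} = \Phi^{1} + \cdots + \Phi^{i}$ by \eqref{equ_partiaalttrr}. For $i \ge 2$ the pair $(i-1, j+1)$ still lies in $(\Z_+)^2$ and satisfies $(i-1) + (j+1) - 1 = m$, so $\Psi^{i-1,j+1} = \Phi^{1} + \cdots + \Phi^{i-1}$ is again a partial trace in the sense of \eqref{equ_partiaalttrr}. Subtracting telescopes to the identity $\Phi^{i,j} = \Psi^{i,j} - \Psi^{i-1,j+1}$ on all of $\mcal{H}_m$.

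Next I would match up the domains. By definition \eqref{equation_U-}, $\mcal{U}^{i,j}_+$ is the set of $A \in \mcal{H}_m$ for which the $(i-1)$-th largest eigenvalue is strictly bigger than the $i$-th largest one; but that is exactly the defining condition of $\mcal{U}^{i-1,j+1}_-$, so $\mcal{U}^{i,j}_+ = \mcal{U}^{i-1,j+1}_-$. Now Proposition \ref{proposition_smooth_Ham} applied to the pair $(i,j)$ shows $\Psi^{i,j}$ is smooth on $\mcal{U}^{i,j}_-$, and the same proposition applied to the pair $(i-1,j+1)$ shows $\Psi^{i-1,j+1}$ is smooth on $\mcal{U}^{i-1,j+1}_- = \mcal{U}^{i,j}_+$. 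Restricting both functions to the intersection $\mcal{U}^{i,j}_- \cap \mcal{U}^{i,j}_+$ and invoking the telescoping identity, $\Phi^{i,j}$ is smooth on $\mcal{U}^{i,j}_- \cap \mcal{U}^{i,j}_+$.

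Finally I would dispose of the boundary case $i = 1$: there is no $(i-1)$-th largest eigenvalue, so $\mcal{U}^{1,j}_+ = \mcal{H}_m$ and $\Phi^{1,m} = \Psi^{1,m}$ is smooth on $\mcal{U}^{1,m}_-$ directly by Proposition \ref{proposition_smooth_Ham}. The only place that requires genuine care — and which I regard as the main (if modest) obstacle — is the reindexing step, namely verifying that $(i-1,j+1)$ is an admissible index pair and that $\mcal{U}^{i,j}_+ = \mcal{U}^{i-1,j+1}_-$; once that is in hand the corollary is immediate.
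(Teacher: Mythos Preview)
Your proof is correct, and it is a cleaner variant of the paper's argument. The paper does not reindex: instead of writing $\Phi^{i,j} = \Psi^{i,j} - \Psi^{i-1,j+1}$, it introduces the reverse partial trace $\Phi^{i,j} + \cdots + \Phi^{m,1}$, observes by a symmetric rerun of the proof of Proposition~\ref{proposition_smooth_Ham} that this function is smooth on $\mcal{U}^{i,j}_+$, and then writes
\[
\Phi^{i,j} = \Psi^{i,j} + \bigl(\Phi^{i,j} + \cdots + \Phi^{m,1}\bigr) - \bigl(\Phi^{1,m} + \cdots + \Phi^{m,1}\bigr),
\]
using that the full trace is globally smooth. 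Your key observation $\mcal{U}^{i,j}_+ = \mcal{U}^{i-1,j+1}_-$ lets you invoke Proposition~\ref{proposition_smooth_Ham} a second time as stated, rather than reproving a mirrored version of it; this also makes the boundary case $i=1$ transparent. The paper's route, on the other hand, makes the symmetry between $\mcal{U}^{i,j}_+$ and $\mcal{U}^{i,j}_-$ explicit, which is conceptually natural even if it costs an extra term.
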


\begin{proof}
	A symmetric argument of the proof of Proposition~\ref{proposition_smooth_Ham} shows that $\Phi^{i,j} + \cdots + \Phi^{{m},1}$ is also smooth on $\mcal{U}_+^{i,j}$.
	Since 
	\[
		\Phi^{i,j} = \Psi^{i,j} + (\Phi^{i,j} + \cdots + \Phi^{n,1}) - (\Phi^{1, {m}} + \cdots + \Phi^{{m},1}),
	\] 
	the component $\Phi^{i,j}$ is smooth on 
	$
		\mcal{U}^{i,j} := \mcal{U}^{i,j}_+ \cap \mcal{U}^{i,j}_-.
	$
\end{proof}

	Let us restrict our attention to the co-adjoint orbit $\mcal{O}_\lambda$. 
	For each $(i,j) \in (\Z_+)^2$ with $2 \leq i+j \leq n$, we have 
	\[
		\Phi_\lambda^{i,j} := \Phi^{i,j} \circ \pi^{i,j}_{\lambda} \colon \mcal{O}_\lambda \to \mcal{H}_{i+j-1} \to \R
	\]
	where 
	$\pi^{i,j}_{\lambda} \colon \mcal{O}_\lambda \to \mcal{H}_{i+j-1}$ 
	is the projection map which assigns the leading principal submatrix of size $(i+j-1) \times (i+j-1)$ for each element in $\mcal{O}_\lambda$.
	Set a partial trace 
	\begin{equation}\label{equ_psiijpartial}
		\Psi^{i,j}_{\lambda} :=  \Phi_{\lambda}^{1,i+j-1} + \cdots + \Phi_{\lambda}^{i,j}\colon \mcal{O}_\lambda \to \R.
	\end{equation}
	Also we denote by 
	\begin{align*}
		\mcal{U}^{i,j}_{\lambda, -} := (\pi^{i,j}_\lambda)^{-1} (\mcal{U}^{i,j}_-) \cap \mcal{O}_\lambda &= \{A \in \mcal{O}_\lambda ~|~ \Phi_\lambda^{i+1,j-1}(A) < \Phi_\lambda^{i,j}(A) \}, \\
		\mcal{U}^{i,j}_{\lambda, +} := (\pi^{i,j}_\lambda)^{-1} (\mcal{U}^{i,j}_+) \cap \mcal{O}_\lambda &=  \{A \in \mcal{O}_\lambda ~|~ \Phi_\lambda^{i,j}(A) < \Phi_\lambda^{i-1,j+1}(A) \}.
	\end{align*}
	By Proposition~\ref{proposition_smooth_Ham} implies that the partial trace $\Psi^{i,j}_{\lambda}$ is smooth on $\mcal{U}^{i,j}_{\lambda, -}$.
	Also, Corollary \ref{corollary_smooth} yields that each component $\Phi_\lambda^{i,j}$ is then smooth on 
	$
		\mcal{U}_\lambda^{i,j} := \mcal{U}^{i,j}_{\lambda, +} \cap \mcal{U}^{i,j}_{\lambda, -} = \mcal{U}^{i,j} \cap \mcal{O}_\lambda,
	$
	which is open and dense in $\mcal{O}_\lambda$. 
	
	Recall that each component $\Phi_\lambda^{i,j}$ generates a Hamiltonian circle action on $\mcal{U}_\lambda^{i,j}$ by Theorem \ref{theorem_GC_generalfacts} (2). 
%{\color{red}	 Hamiltonian $S^1$-space In other words, each $(\mcal{U}_\lambda^{i,j}, \omega_\lambda|_{\mcal{U}^{i,j}})$ is a Hamiltonian $S^1$-manifold with a moment map $\Phi_\lambda^{i,j}$.} 			
	Moreover, the following lemma tells us that $\Psi_{\lambda}^{i,j}$ is a moment map for a  Hamiltonian circle action with respect to the 
	symplectic form on $\mcal{U}_{\lambda, -}^{i,j}$ induced from $\omega_\lambda$.

\begin{lemma}\label{lemma_Ham_action}
For each $(i,j) \in (\Z_+)^2$ with $2 \leq i+j \leq n$, the partial trace $\Psi_{\lambda}^{i,j}$ generates a Hamiltonian circle action on 
	$(\mcal{U}_{\lambda, -}^{i,j}, \omega_\lambda |_{\mcal{U}_{\lambda, -}^{i,j}})$. 
\end{lemma}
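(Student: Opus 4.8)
The plan is to exhibit $\Psi^{i,j}_\lambda$ as the restriction of a partial-trace function that, on the open set $\mcal{U}^{i,j}_{\lambda,-}$, is smooth (already known from Proposition~\ref{proposition_smooth_Ham}) and whose Hamiltonian flow integrates to an honest $S^1$-action. The model to keep in mind is the classical fact that on $\mcal{H}_m$ the sum of the top $i$ eigenvalues is the moment map for a circle action: its Hamiltonian flow rotates, inside each eigenspace splitting, the subspace spanned by the top $i$ eigenvectors against its complement. I would make this precise and then transport it through the leading-principal-submatrix projection $\pi^{i,j}_\lambda$.

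First I would work on $\mcal{H}_m$ with $m = i+j-1$. On the open set $\mcal{U}^{i,j}_-$, every $A$ has a well-defined ``top $i$'' spectral subspace $V_i(A) \subset \C^m$ (the sum of eigenspaces for the $i$ largest eigenvalues), varying smoothly in $A$ by the gap hypothesis, and an orthogonal complement $V_i(A)^\perp$. Let $P_A$ be the orthogonal projection onto $V_i(A)$; then $\Psi^{i,j}(A) = \operatorname{tr}(A P_A)$, and one checks $d\Psi^{i,j}_A = \langle \cdot, P_A\rangle$ (the derivative of the projection term drops out because $A$ commutes with $P_A$ and $P_A$ is a spectral projection). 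Hence the Hamiltonian vector field of $\Psi^{i,j}$ with respect to the KKS form on the (co)adjoint orbit through $A$ is $X_{\Psi^{i,j}}(A) = [\,\sqrt{-1}\,P_A, A\,] = [\,\sqrt{-1}\,P_A, A\,]$, i.e. the infinitesimal conjugation by the skew-Hermitian element $\sqrt{-1}\,P_A$. Because $P_A$ is itself a function of $A$ that commutes with $A$, the flow is $A \mapsto e^{\sqrt{-1}\,t\,P_{A_0}}\,A\,e^{-\sqrt{-1}\,t\,P_{A_0}}$ along the trajectory through $A_0$, and since $P_{A_0}$ is a projection, $e^{\sqrt{-1}\,t\,P_{A_0}}$ is $2\pi$-periodic in $t$; thus the flow is periodic, i.e. defines a genuine Hamiltonian $S^1$-action on $\mcal{U}^{i,j}_-$ (this is exactly \cite[Proposition 5.3]{GS} in the form recalled as Theorem~\ref{theorem_GC_generalfacts}(2), now applied to the partial trace rather than to a single eigenvalue).

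Next I would descend to $\mcal{O}_\lambda$. The projection $\pi^{i,j}_\lambda \colon \mcal{O}_\lambda \to \mcal{H}_m$ sending a matrix to its leading principal $m\times m$ submatrix is the moment map, up to the standard identification, for the natural $U(m)$-action on $\mcal{O}_\lambda$ embedding $U(m) \hookrightarrow U(n)$ in the upper-left block; consequently $\pi^{i,j}_\lambda$ is Poisson with respect to $\omega_\lambda$ and the linear Poisson structure on $\mcal{H}_m \cong \frak u(m)^*$. Therefore $\Psi^{i,j}_\lambda = \Psi^{i,j}\circ \pi^{i,j}_\lambda$ generates, on the open set $\mcal{U}^{i,j}_{\lambda,-} = (\pi^{i,j}_\lambda)^{-1}(\mcal{U}^{i,j}_-)\cap\mcal{O}_\lambda$ where it is smooth, the Hamiltonian whose flow is the pullback of the $S^1$-flow above under the $U(m)$-action — concretely, conjugation of the full $n\times n$ matrix by $e^{\sqrt{-1}\,t\,\widetilde P}$, where $\widetilde P$ is $P_{A}$ placed in the upper-left $m\times m$ block and $0$ elsewhere. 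Since $\widetilde P$ is again a projection matrix, this is $2\pi$-periodic, so $\Psi^{i,j}_\lambda$ generates a Hamiltonian circle action on $(\mcal{U}^{i,j}_{\lambda,-}, \omega_\lambda|_{\mcal{U}^{i,j}_{\lambda,-}})$, as claimed.

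The main obstacle I anticipate is the smooth dependence of the spectral projection $P_A$ on $A$ near the boundary of $\mcal{U}^{i,j}_-$ and, relatedly, justifying that the Hamiltonian flow stays inside $\mcal{U}^{i,j}_{\lambda,-}$ for all time — a priori a trajectory could run out of the open set before completing a period. The point that saves this is that the flow is by conjugation by a \emph{fixed} unitary along each trajectory (because $P$ is constant along the flow it generates, being a spectral projection of $A$ and $A$ evolving by conjugation), so each eigenvalue of each leading principal submatrix involved is constant along the flow; in particular the spectral gap defining $\mcal{U}^{i,j}_-$ is preserved, the flow is complete on $\mcal{U}^{i,j}_{\lambda,-}$, and periodicity is immediate. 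Once that invariance is in hand, the rest is the routine identification of derivatives and the observation $e^{2\pi\sqrt{-1}\,P} = \mathrm{Id}$ for a projection $P$.
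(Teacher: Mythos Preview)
Your approach differs from the paper's and is more constructive. The paper argues by density: on the dense open subset $\mcal{U} = \bigcap_{\ell} \mcal{U}_\lambda^{\ell,\,i+j-\ell}$ each summand $\Phi_\lambda^{\ell,\,i+j-\ell}$ already generates a circle action by \cite{GS}, hence so does their sum $\Psi_\lambda^{i,j}$; since periodicity of orbits is a closed condition and $\Psi_\lambda^{i,j}$ is smooth on the larger set $\mcal{U}_{\lambda,-}^{i,j}$, the circle action persists there. You instead write down the flow explicitly as conjugation by $e^{\sqrt{-1}\,t\,\widetilde P}$ with $\widetilde P$ a projection, which is the Thimm construction made explicit. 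This is a valid alternative and has the advantage of giving a concrete formula for the action rather than merely its existence.

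That said, your first paragraph contains a confusion you should repair before the argument stands. On any coadjoint orbit inside $\mcal{H}_m$ the function $\Psi^{i,j}$ is \emph{constant} (it depends only on the spectrum of $A$), so its KKS Hamiltonian vector field there vanishes; indeed your own formula $[\sqrt{-1}\,P_A,A]$ is zero because $P_A$ is a spectral projection of $A$. There is no nontrivial circle action on $\mcal{U}^{i,j}_-$ ``generated by $\Psi^{i,j}$'' to serve as a model, and the appeal to Theorem~\ref{theorem_GC_generalfacts}(2) is misplaced (that statement is about $\mcal{O}_\lambda$, not about $\mcal{H}_m$). The real content is entirely in your second paragraph: because $\Psi^{i,j}$ is $\mathrm{Ad}^*$-invariant and $\pi^{i,j}_\lambda$ is a Poisson (moment) map, one has $d\pi^{i,j}_\lambda\bigl(X_{\Psi^{i,j}_\lambda}\bigr) = X_{\Psi^{i,j}}\circ\pi^{i,j}_\lambda = 0$, so $\pi^{i,j}_\lambda$ is constant along trajectories; hence $\widetilde P$ is constant along each trajectory and the flow on $\mcal{O}_\lambda$ is conjugation by the fixed unitary $e^{\sqrt{-1}\,t\,\widetilde P_0}$, manifestly $2\pi$-periodic and preserving $\mcal{U}_{\lambda,-}^{i,j}$. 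Stating it this way also removes the circularity in your third paragraph.
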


\begin{proof}
	By Proposition \ref{proposition_smooth_Ham}, $\Psi_\lambda^{i,j}$ is smooth on $\mcal{U}_{\lambda,-}^{i,j}$ so that we only need to prove that every orbit generated by 
	the Hamiltonian vector field $\xi$ of $\Psi_\lambda^{i,j}$ is periodic on $\mcal{U}_{\lambda,-}^{i,j}$.
	Since $\Psi_{\lambda}^{i,j} = \Phi_\lambda^{1,i+j-1} + \cdots + \Phi_\lambda^{i,j}$ and each summand $\Phi_\lambda^{\ell, i+j -\ell}$ generates a Hamiltonian circle action 
	on $\mcal{U} := \bigcap_{\ell = 1}^i \mcal{U}_\lambda^{\ell, i+j - \ell}$ of $\mcal{O}_\lambda$  by \cite{GS}, so does $\Psi_{\lambda}^{i,j}$.
	Also, the set $\mcal{U}$ is also open dense in $\mcal{U}_{\lambda,-}^{i,j}$ and every orbit of the Hamiltonian vector field $\xi$ of $\Psi_\lambda^{i,j}$ is periodic on $\mcal{U}$. 
	Any orbit of $\xi$ is periodic on $\mcal{U}_{\lambda,-}^{i,j}$ since the {\em periodicity} is a closed condition. 
\end{proof}

It is not guaranteed that the Hamiltonian $S^1$-action generated by $\Psi_\lambda^{i,j}$ has an extremal fixed point set 
in $\mcal{U}_{\lambda-}^{i,j}$ since $\mcal{U}_{\lambda-}^{i,j}$ is open. Nevertheless, we will see in Section \ref{secClassificationOfMonotoneLagrangianFibers} that 
\begin{itemize}
	\item $\mcal{U}_{\lambda,-}^{i,j}$ contains the maximal fixed point set of the action generated by $\Psi_\lambda^{i,j}$, and 
	\item the action is semifree near the maximal fixed component.
\end{itemize}
that will be crucially needed for our purpose.

\begin{example}\label{example_123}
	Let $\lambda = \{2, 0, -2\}$. The co-adjoint orbit $\mcal{O}_\lambda$ is the complete flag variety 
	$
	\mcal{F}\ell(3) \simeq U(3) / \left( U(1) \right)^3.
	 $
	The corresponding GC polytope is the intersection of six half planes given by
	\[
			u_{2,1} \leq u_{1,1} \leq u_{1,2}, \quad 
			0 \leq u_{1,2} \leq 2, \quad 
			-2 \leq u_{2,1} \leq 0
	\]
	where $(u_{1,1}, u_{1,2}, u_{2,1})$ is a coordinate system of $\R^3$, see Figure \ref{figure_GC_moment_123}. In this case, we have
	$$
	\begin{cases}
	\, \mcal{U}_\lambda^{1,1} = \mcal{O}_\lambda, \\
	\, \mcal{U}_\lambda^{1,2} = \mcal{U}_\lambda^{2,1} = \{ A \in \mcal{O}_\lambda ~|~ \Phi_\lambda^{1,2}(A) > \Phi_\lambda^{2,1}(A) \}.
	\end{cases}
	$$
	$\Phi_{1,2}$ and $\Phi_{2,1}$ are \emph{not} smooth on $\Phi_\lambda^{-1}(0,0,0)$. But, their sum $\Phi_\lambda^{1,2} + \Phi_\lambda^{2,1}$ is smooth on $\mcal{O}_\lambda$.
	\begin{figure}[h]
		\scalebox{1.35}{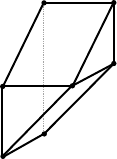}
		\caption{\label{figure_GC_moment_123} Gelfand--Cetlin polytope $\Delta_\lambda$ for $\lambda = \{2, 0, -2\}$}
	\end{figure}
\end{example}

%-------------------------------------------------------------------------
\subsection{Ladder diagrams}
\label{ssecLadderDiagrams}
Let $\lambda$ be given in \eqref{lambdaidef}.
According to the work \cite{ACK} of the first author with An and Kim, the face structure of a GC polytope $\Delta_\lambda$ can be understood in terms of certain subgraphs of a {\em ladder diagram}. 

\begin{definition}
	Consider $\Gamma_{\Z^2} \subset \R^2$, the square grid graph such that
	\begin{itemize}
		\item its vertex set is $\Z^2 \subset \R^2$ and
		\item each vertex $(a,b) \in \Z^2$ connects to exactly four vertices $(a, b \pm 1)$ and $(a \pm 1, b)$.
	\end{itemize}
	The {\em ladder diagram} $\Gamma_\lambda$ is the induced subgraph whose vertex set is 
	\[
		\bigcup_{j=0}^r \,\, \left\{ (i,j) \in \Z^2 ~\big{|}~ \, (i,j) \in [n_j, n_{j+1}] \times [0,n-n_{j+1}] \right\}.
	\]
	Here, we denote the ladder diagram by $\Gamma_\lambda$ because $\lambda$ determines $(n_1, \cdots, n_r; n_{r+1})$ in~\eqref{nidef}.
\end{definition}	

We call the {\em origin} the vertex $O = (0,0)$ and a {\em top vertex} a farthest vertex from the origin with respect to the taxi-cap metric. In other words, $(i,j)$ is a top vertex of $\Gamma_\lambda$ 
if $(i,j)$ is a vertex of $\Gamma_\lambda$ and $i+j = n$.
(See Figure \ref{figure_ld123}, the blue dots denote the top vertices.)
Also, a shortest path from the origin to a top vertex of $\Gamma_\lambda$ is called a {\em positive path}. 

\begin{example}
Consider two examples$\colon \lambda = (2, 0, -2)$ and $\lambda^\prime = (3, 3, 0, -3, -3)$. Since the corresponding tuples $(n_1, \cdots, n_r; n_{r+1})$ in~\eqref{nidef} are 
$(1,2;3)$ and $(2,3;5)$. The ladder diagrams $\Gamma_\lambda$ and $\Gamma_{\lambda^\prime}$ are in Figure~\ref{figure_ld123}. The top vertices of $\Gamma_\lambda$ and $\Gamma_{\lambda^\prime}$ are $\{(1,2), (2,1)\}$ and $\{(2,3), (3,2)\}$ respectively.
\begin{figure}[h]
	\scalebox{0.47}{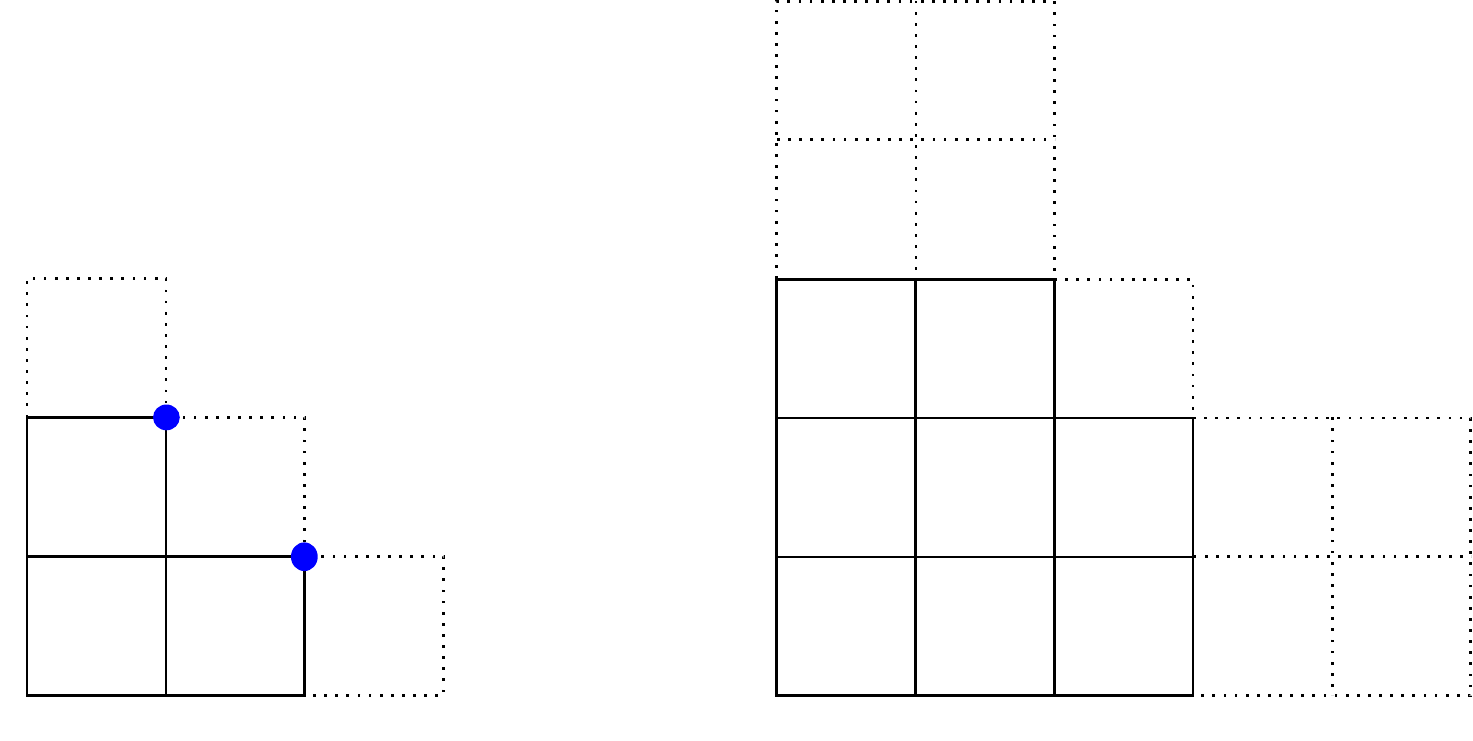}
	\caption{\label{figure_ld123} Ladder diagrams $\Gamma_\lambda$ and  $\Gamma_{\lambda^\prime}$.}
\end{figure}
\end{example}
	
Now, we equip a face structure on $\Gamma_\lambda$ as follows.

\begin{definition}[Definition 1.5 in \cite{ACK}]\label{def_face}
	A subgraph $\gamma$ of $\Gamma_\lambda$ is called a {\em face} of $\Gamma_\lambda$ if 
	\begin{itemize}
		\item $\gamma$ contains all top vertices of $\Gamma_\lambda$, and 
		\item $\gamma$ can be represented as a union of positive paths.
	\end{itemize}
	The {\em dimension} of a face $\gamma$ of $\Gamma_\lambda$ is defined by 
	$$\dim \gamma := \mathrm{rank}_\Z H_1(\gamma; \Z),$$
	that is, $\dim \gamma$ is the number of bounded regions in $\gamma$.
\end{definition}
	
The first author together with An and Kim proved the following correspondence.
	
\begin{theorem}[Theorem 1.9 in \cite{ACK}]\label{theorem_ACK}
	There exists a bijective map
		\begin{equation}\label{equ_orderpreservingmap}
			\{~\text{faces of}~\Gamma_\lambda \} \stackrel{\Psi} \longrightarrow \{~\text{faces of} ~\Delta_\lambda\}
		\end{equation}
	such that for any faces $\gamma$ and $\gamma'$ of $\Gamma_\lambda$,
	\begin{itemize}
		\item $\dim \Psi(\gamma) = \dim \gamma$ 
		\item $\gamma \subset \gamma'$ if and only if $\Psi(\gamma) \subset \Psi(\gamma')$.
	\end{itemize}
\end{theorem}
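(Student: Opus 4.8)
The plan is to construct the inverse of $\Psi$ explicitly through the notion of the \emph{tight set} of a face of $\Delta_\lambda$, and then read off all four asserted properties. First I would fix a dictionary between the combinatorial data on the two sides: the entries of the triangular array in \eqref{equation_GC-pattern} (together with the prescribed top row $\lambda$) index the vertices of $\Gamma_\lambda$, and each interlacing inequality ``(upper entry) $\ge$ (lower entry)'' appearing in \eqref{equation_GC-pattern} that is \emph{not} identically an equality on $\Delta_\lambda$ corresponds to a single edge of $\Gamma_\lambda$ joining the two entries it relates; the always-tight inequalities --- those forced by the repetitions in $\lambda$, cf. \eqref{lambdaidef} --- are precisely the ones with no edge, which is why $\Gamma_\lambda$ is a proper subgraph of the grid $\Gamma_{\Z^2}$. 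Given $\mathbf{u}\in\Delta_\lambda$, call an edge \emph{tight at $\mathbf{u}$} if its inequality is an equality at $\mathbf{u}$. Since each of these inequalities is a supporting functional for $\Delta_\lambda$, the standard fact that the relative interior of a face is exactly where a fixed subcollection of supporting functionals vanishes shows that the set of tight edges is constant on the relative interior of each face $f$; write $\tau(f)\subseteq E(\Gamma_\lambda)$ for it, and let $\gamma(f)\subseteq\Gamma_\lambda$ be the subgraph with edge set $E(\Gamma_\lambda)\setminus\tau(f)$ and vertex set consisting of all vertices incident to those edges together with all top vertices. The candidate is that $f\mapsto\gamma(f)$ is the inverse of $\Psi$.

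Second, I would establish the two facts that make $\gamma$ a well-defined bijection onto faces of $\Gamma_\lambda$. The first is that $\gamma(f)$ is always a face of $\Gamma_\lambda$ in the sense of Definition~\ref{def_face}: it contains all top vertices by construction, and the assertion that $E(\gamma(f))$ is a union of positive paths is equivalent, under the dictionary, to the statement that $\tau(f)$ is closed under the local ``diamond propagation'' rules of the min--max pattern in \eqref{equation_GC-pattern} (equality along two edges of a diamond forces equality along the others). Closure is automatic, since the solution set of the tight equalities also satisfies all their linear consequences; the content is the purely combinatorial translation ``complement closed under propagation $\iff$ union of positive paths'', a finite local check on the diamond shapes occurring in \eqref{equation_GC-pattern}. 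The second fact is surjectivity: for any union $\gamma$ of positive paths containing all top vertices, turning every inequality of $E(\Gamma_\lambda)\setminus E(\gamma)$ into an equality cuts $\Delta_\lambda$ in a nonempty face $f_\gamma$ whose tight set is \emph{exactly} $E(\Gamma_\lambda)\setminus E(\gamma)$ --- no ``free'' inequality is secretly forced --- which I would prove by exhibiting a concrete point of $\Delta_\lambda$: assign to each connected ``free'' block of entries a common value strictly interpolating the forced boundary values. Injectivity of $\gamma$ is immediate, as distinct faces of a polytope have distinct tight sets. I expect this step, and within it the claim that the \emph{only} linear relations among the GC equalities are the local diamond ones, to be the main obstacle.

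Third, for the dimension equality $\dim\Psi(\gamma)=\dim\gamma=\operatorname{rank}_\Z H_1(\gamma;\Z)$, the cleanest route is a direct parametrization of $f$: assign a free real parameter to each bounded region of $\gamma(f)$ and express every entry of the pattern appearing in $f$ as its forced value plus a signed linear combination of the parameters of the regions enclosed above it, then check this is an affine isomorphism from a neighborhood of the origin in $\R^{\#\{\text{bounded regions of }\gamma(f)\}}$ onto a relative neighborhood of a chosen point of the relative interior of $f$. Alternatively one argues by induction on $|\tau(f)|$, using that passing to a facet of $f$ corresponds to deleting from $\gamma(f)$ a minimal propagation-closed edge set (and the vertices it isolates) which drops $\operatorname{rank}_\Z H_1$ by exactly one; the base case $f=\Delta_\lambda$, $\gamma(f)=\Gamma_\lambda$ reduces to checking that $\dim_\C\mcal{O}_\lambda$, recorded in Section~\ref{ssecGelfandCetlinSystems}, equals the number of bounded unit regions of $\Gamma_\lambda$, a direct count from \eqref{nidef}.

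Finally, order preservation is formal: for faces $f,f'$ of $\Delta_\lambda$ one has $f\subseteq f'$ iff every supporting inequality tight on $f'$ is tight on $f$, i.e. $\tau(f')\subseteq\tau(f)$, i.e. $\gamma(f)\subseteq\gamma(f')$. Combined with the bijectivity and dimension results above, setting $\Psi:=\gamma^{-1}$ gives the dimension- and order-preserving bijection. The crux throughout is the second step --- pinning down exactly which simultaneous systems of GC equalities are realizable by a face of $\Delta_\lambda$ and showing the resulting combinatorial objects are precisely the unions of positive paths containing all top vertices; once that is in hand, the dimension count and order preservation are bookkeeping.
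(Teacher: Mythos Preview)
The paper does not prove this theorem: it is quoted from \cite{ACK}, and the text following the statement only \emph{describes} the map $\Psi$ (missing edges of $\gamma$ become imposed equalities, and $\Psi(\gamma)$ is the face supported by that intersection of facets). There is no proof in the present paper to compare against.

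That said, your inverse construction via tight sets is exactly dual to the description the paper gives: the paper sends $\gamma$ to the face cut out by the equalities indexed by $E(\Gamma_\lambda)\setminus E(\gamma)$, and you send a face $f$ to the subgraph whose edge set is $E(\Gamma_\lambda)\setminus\tau(f)$; these are visibly inverse once one knows both are well defined. Your identification of the crux --- that the realizable tight sets are precisely the edge complements of faces of $\Gamma_\lambda$, with no hidden linear dependencies among the GC facet inequalities --- is accurate, and the dimension count and order preservation are, as you say, bookkeeping afterward.

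One dictionary correction: in the paper's conventions the variables $u_{i,j}$ are attached to the unit \emph{boxes} $\square^{(i,j)}$ of $\Gamma_\lambda$, not to its vertices; an edge of $\Gamma_\lambda$ is the wall shared by two adjacent boxes, and it carries the interlacing inequality between the corresponding two entries (see the paragraph after the theorem, where the missing edge $\overline{(i,j)(i+1,j)}$ is matched with $u_{i+1,j}=u_{i+1,j+1}$). Your ``entries $\leftrightarrow$ vertices, inequalities $\leftrightarrow$ edges'' picture is the planar dual of this and leads to the same combinatorics, but if you want your argument to line up literally with the paper's $\Gamma_\lambda$ you should rephrase accordingly.
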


We briefly describe the order-preserving bijective map $\Psi$ in~\eqref{equ_orderpreservingmap}. 
Denote the set of the double indices of the GC system $\Phi_\lambda$ by $I_\lambda \subset (\Z_+)^2$ 
such that $(i,j) \in I_\lambda$ if and only if $\Phi_\lambda^{i,j}$ is a non-constant function on $\mcal{O}_\lambda$.
For each $(i,j) \in I_\lambda$, let $\square^{(i,j)}$ be the closed region bounded by the unit square in $\R^2$ 
such that its vertices are lying on the lattice $\Z^2$ and the upper-right vertex is located at $(i,j)$. 
The non-constant GC components $\{ \Phi_\lambda^{i,j} \}_{(i,j) \in I_\lambda}$ and the unit boxes $\{ \square^{(i,j)} \}$ in the ladder diagram $\Gamma_\lambda$ are then in one-to-one correspondence because of the GC pattern in~\eqref{equation_GC-pattern}.
Each point in $\Delta_\lambda$ corresponds to
a ``{\em filling of each $\square^{(i,j)}$ by a real number}'' obeying the inequalities in \eqref{equation_GC-pattern}. 
For each face $\gamma$ in $\Gamma_\lambda$, we take the intersection of all facets defined by $u_{i+1,j} = u_{i+1, j+1}$ (resp. $u_{i,j+1} = u_{i+1, j+1}$) if $\gamma$ does \emph{not} contain the line segment $\overline{(i,j)(i+1,j)}$ (resp. $\overline{(i,j)(i,j+1)}$). The face supported by the intersection is then the image $\Psi(\gamma)$. 
%Conversely, for each point $\textbf{\textup{u}} = (u_{i,j}) \in \Delta_\lambda$, one can find a unique face $\gamma$ of $\Gamma_\lambda$ determined by the union of 
%\begin{itemize}
%	\item $\gamma$ does \emph{not} contain $\overline{(i,j)(i+1,j)}$ (resp. $\overline{(i,j)(i,j+1)}$) if $u_{i+1,j} = u_{i+1, j+1}$ (resp. $u_{i,j+1} = u_{i+1, j+1}$).
%\end{itemize}

\begin{example}
Let us revisit Example~\ref{example_123} when $\lambda = (2, 0, -2)$. 
The first graph in Figure~\ref{figure_twofaces} is the ladder diagram $\Gamma_\lambda$.
As an example, let us consider four subgraphs $\gamma^{\vphantom{\prime}}_1, \gamma^{\vphantom{\prime}}_2, \gamma_{\vphantom{1}}^\prime$, and $\gamma_{\vphantom{1}}^{\prime\prime}$ of $\Gamma_\lambda$ given in Figure~\ref{figure_twofaces}. Observe that $\gamma_1$ and $\gamma_2$ are faces and $\gamma^\prime$ and $\gamma^{\prime\prime}$ are \emph{not} faces. The faces $\gamma_1$ and $\gamma_2$ correspond to the faces contained in $u_{1,2} = 0$ and $u_{1,2} = 0, u_{1,1} = u_{2,1}$ respectively. Since $\gamma_1$ (resp. $\gamma_2$) has two (resp. one) bounded regions, the dimension of the face $\Psi(\gamma_1)$ (resp. $\Psi(\gamma_2)$) is two (resp. one). Because $\gamma_1$ contains $\gamma_2$, the face $\Psi(\gamma_1)$ contains the face $\Psi(\gamma_2)$. 

\begin{figure}[h]
	\scalebox{1}{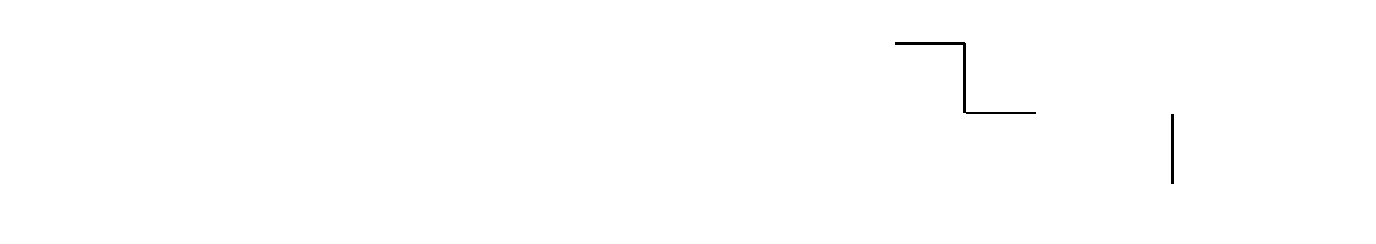}
		\caption{\label{figure_twofaces} Faces and non-faces of $\Gamma_\lambda$ where $\lambda =(2, 0, -2)$}
\end{figure}

Note that the front triangle of $\Delta_\lambda$ in Figure~\ref{figure_frontface} is the image of $\gamma_1$ in Figure~\ref{figure_twofaces} via $\Psi$.
It has three vertices$\colon$
$$
v_1 := (u_{1,1} = 0, u_{1,2} = 0, u_{2,1} = 0), v_2 := (0, 0, -2), v_3 = (-2, 0, -2).
$$
We denote by $\gamma_{v_i}$ the inverse image of $v_i$ via $\Psi$. 
Let $e_{ij}$ be the edge connecting $v_i$ and $v_j$ where $1 \leq i < j \leq 3$ and $\gamma_{e_{ij}}$ the corresponding face to $e_{ij}$ in $\Gamma_\lambda$.
The corresponding faces contained in $\gamma_1$ of $\Gamma_\lambda$ are illustrated in Figure~\ref{figure_frontface}. 

\begin{figure}[h]
	\scalebox{1.1}{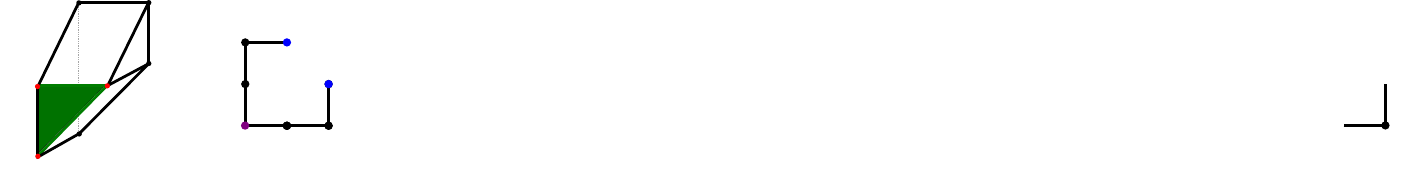}
		\caption{\label{figure_frontface} Faces contained in the front face of $\Delta_\lambda$.}
\end{figure}
\end{example}

%-------------------------------------------------------------------------
\subsection{The topology of Gelfand--Cetlin fibers}
\label{ssecTopologyOfGelfandCetlinFibers}

Let $\lambda$ be given in \eqref{lambdaidef} and let $f$ be any face of the GC polytope $\Delta_\lambda$ of dimension $k$. 
For any point $\textbf{\textup{u}}$ in its relative interior $\mathring{f}$, the fiber $\Phi_\lambda^{-1}(\textbf{\textup{u}})$ is an isotropic submanifold of $(\mcal{O}_\lambda, \omega_\lambda)$ by Theorem \ref{theorem_GC_generalfacts}.
In \cite[Theorem 5.12]{CKO1}, the authors and Oh developed a way of ``reading off'' the topology of $\Phi^{-1}_\lambda(\textbf{\textup{u}})$ from the face $\gamma_f$ corresponding to $f$ in $\Gamma_\lambda$.
In this section, we review the algorithm and compute some homotopy groups of fibers of $\Phi_\lambda$.

We first consider a closed region in $\R^2$ bounded by $\Gamma_\lambda$, called the {\em board}.
Then we think of the set of edges in $\gamma_f$ as {\em walls}. We still denote the ``board with the walls'' by $\gamma_f$ if there is no danger of 
confusion. See Figure \ref{figure_board_walls} for example.

\begin{figure}[h]
	\scalebox{0.85}{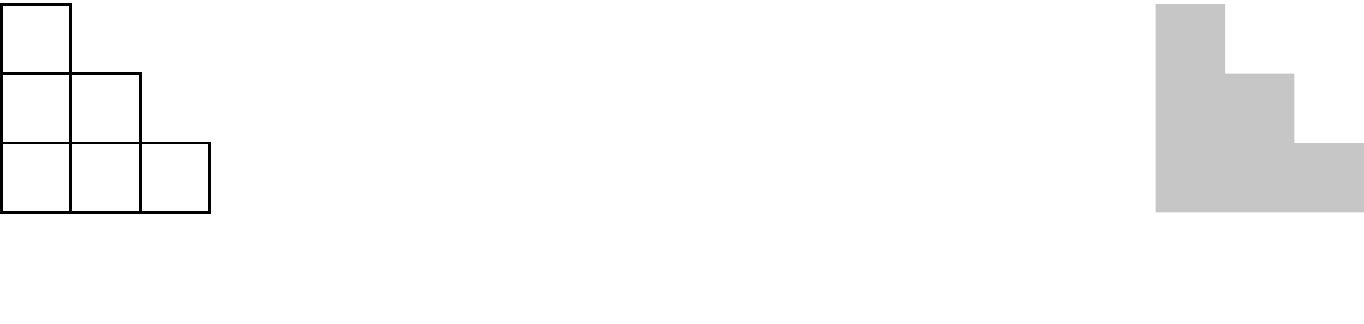}
	\caption{\label{figure_board_walls} Board with walls}
\end{figure}

Now, we define a notion ``{\em filling of $\gamma_f$ with $L$-blocks}'' which captures the topology of a fiber over a point contained in the relative interior of $f$.

\begin{definition}[\cite{CKO1}, Definition 5.16]
	Let $\square^{(i,j)}$ be defined in Section \ref{ssecLadderDiagrams}. 
	For each positive integer $k \in \Z_{\geq 1}$, an {\em $L_k$-block at $(a,b)$} is defined by
	\[
		L_k(a,b) := \bigcup_{\substack{0 \leq p \leq k-1}} \square^{(a,b+p)} \,\, \cup \bigcup_{\substack{0 \leq p \leq k-1}} \square^{(a+p,b)}.
	\]
	An \emph{$L$-block} is meant to be a $L_k(a,b)$-block for some $k \geq 1$ and some $(a,b) \in (\Z_+)^2$. 
\end{definition}

	\vspace{-0.6cm}
	\begin{figure}[h]
	\scalebox{0.85}{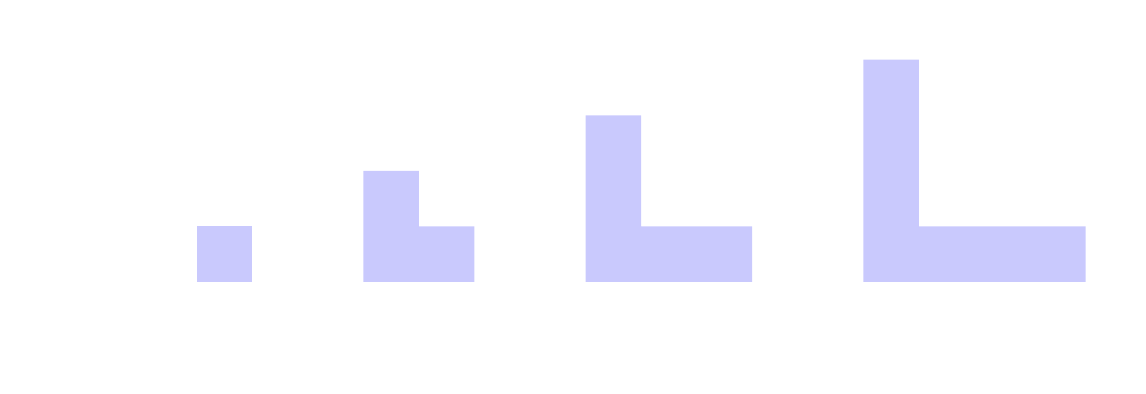}
	\vspace{-0.3cm}
	\caption{\label{figure_L_shape} $L$-blocks}
	\end{figure}

Regarding a face $\gamma_f$ of $\Gamma_\lambda$ as a board with walls, we will fill $\gamma_f$ with $L$-blocks satisfying 
\begin{align}
	\begin{aligned}
		\text{(1) The interior of a $L$-block $L_k(a,b)$ does \emph{not} contain a wall of $\gamma_f$, and} \hspace{3.07cm}\\
		\text{(2) Both the rightmost edge and the top edge of $L_k(a,b)$ are walls of $\gamma_f$.} \hspace{3cm}
	\end{aligned}\label{equation_L_block}
\end{align}

\begin{definition}
	A {\em filling of $\gamma_f$ with $L$-blocks} is defined as a collection of all $L$-blocks satisfying the conditions \eqref{equation_L_block}. 
	We say that $\gamma_f$ is {\em fillable by $L$-blocks} if the filling of $\gamma_f$ covers the whole board $\gamma_f$.
	If $\gamma_f$ is fillable by $L$-blocks, then we call a face $\gamma_f$ (resp. $f$) a {\em Lagrangian face} of $\Gamma_\lambda$ (resp. $\Delta_\lambda$).
\end{definition}

The following theorem characterizes Lagrangian fibers.

\begin{theorem}[\cite{CKO1}, Corollary 5.23]\label{theorem_Lblockfillablemain}
	Let $f$ be a face of $\Delta_\lambda$ and let $\textbf{\textup{u}}$ be any point in its relative interior.
	The fiber $\Phi_\lambda^{-1}(\textbf{\textup{u}})$ is Lagrangian if and only if $\gamma_f$ is fillable by $L$-blocks.
\end{theorem}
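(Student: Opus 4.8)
The plan is to reduce the statement to a dimension count for the fiber and then to read that dimension off the wall pattern of $\gamma_f$. By Theorem~\ref{theorem_GC_generalfacts}(3) the fiber $\Phi_\lambda^{-1}(\textbf{\textup{u}})$ is always an isotropic submanifold of the $2N$-dimensional symplectic manifold $(\mcal O_\lambda,\omega_\lambda)$, where $N:=\dim_\C\mcal O_\lambda$, and by Theorem~\ref{theorem_GC_generalfacts}(4) it is the connected manifold $(S^1)^k\times Y_F$ with $k=\dim f$. Hence $\Phi_\lambda^{-1}(\textbf{\textup{u}})$ is Lagrangian if and only if $\dim\Phi_\lambda^{-1}(\textbf{\textup{u}})=N$; moreover $N$ equals the number of unit boxes of $\Gamma_\lambda$, via the correspondence between non-constant components $\Phi_\lambda^{i,j}$ and boxes $\square^{(i,j)}$ recalled in Section~\ref{ssecLadderDiagrams}. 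So the theorem becomes the claim that, for $\textbf{\textup{u}}\in\mathring f$, $\dim\Phi_\lambda^{-1}(\textbf{\textup{u}})$ equals the box count of $\Gamma_\lambda$ precisely when $\gamma_f$ is fillable by $L$-blocks, and is strictly smaller otherwise; this single sharp estimate handles both implications of the theorem at once.

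Next I would write down an explicit model of $\Phi_\lambda^{-1}(\textbf{\textup{u}})$ as an iterated bundle coming from the nested structure of the GC system. For $2\le m\le n$, passing from an element of $\mcal O_\lambda$ to its leading principal $(m-1)\times(m-1)$ submatrix produces the $(m-1)$-st row of the pattern~\eqref{equation_GC-pattern}, and one analyses $\Phi_\lambda^{-1}(\textbf{\textup{u}})$ one row at a time. On the open dense sets $\mcal U_{\lambda,-}^{i,j}$ on which the partial traces $\Psi_\lambda^{i,j}$ are smooth (Proposition~\ref{proposition_smooth_Ham}, Corollary~\ref{corollary_smooth}, Lemma~\ref{lemma_Ham_action}) the relevant row-projections are equivariant for the Hamiltonian circle actions of Theorem~\ref{theorem_GC_generalfacts}(2), and over the interior of $\Delta_\lambda$ these give the torus directions of the fiber. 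When $\textbf{\textup{u}}$ lies on the face $f$, each equality cutting out $f$ forces two adjacent entries of some row to collide; using the min--max description of eigenvalues together with the equivariant Darboux theorem one obtains a local normal form in which the collapsing circle orbit is absorbed into a factor diffeomorphic to a quotient $U(k)/U(k-1)\cong S^{2k-1}$, the size $k$ being the length of the corresponding ``collision cluster.'' Assembling these factors along the tower of row-projections yields the iterated bundle structure on $\Phi_\lambda^{-1}(\textbf{\textup{u}})$ compatible with Theorem~\ref{theorem_GC_generalfacts}(4): the bounded regions of $\gamma_f$ produce the $(S^1)^k$ factor (each is an $L_1$-cluster contributing $S^1$) and the larger clusters assemble $Y_F$.

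The combinatorial heart is then to match the dimension of this iterated bundle with the box count of $\Gamma_\lambda$, cluster by cluster. The key lemma is that a connected cluster of boxes of $\gamma_f$ contributes a factor of real dimension at most the number of boxes in that cluster, with equality if and only if the cluster decomposes into a disjoint union of $L_k$-blocks in the sense of~\eqref{equation_L_block}; indeed an $L_k$-block has $2k-1$ boxes and contributes the factor $S^{2k-1}$ of dimension $2k-1$. Summing over all clusters, $\dim\Phi_\lambda^{-1}(\textbf{\textup{u}})=N$ exactly when every cluster is tileable by $L$-blocks, i.e. when the filling of $\gamma_f$ by $L$-blocks covers the whole board; if some cluster fails to be tileable its factor is (locally) a flag-type manifold $U(k)/(U(a_1)\times\cdots\times U(a_s))$ of strictly smaller dimension, so the total drops and the isotropic fiber fails to be Lagrangian. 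Throughout, the face correspondence of Theorem~\ref{theorem_ACK} is used both to translate between faces of $\Delta_\lambda$ and faces of $\Gamma_\lambda$ and to set up an induction over the face poset.

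I expect the second and third steps to be the genuine obstacles. The second requires pinning down the exact diffeomorphism type of the building blocks under degeneration of the circle actions, i.e. producing careful equivariant normal forms along the singular loci of the $\Psi_\lambda^{i,j}$ and controlling how successive row-projections interact; the smoothness statements of Proposition~\ref{proposition_smooth_Ham} and Corollary~\ref{corollary_smooth} are what make these local models available. The third is a purely combinatorial inequality — the dimension of the factor attached to a staircase-type region of boxes is at most its area, with equality iff the region is $L$-tileable — which has to be proved by induction on the region and is, in effect, the content of the $L$-block algorithm. Both of these are carried out in~\cite{CKO1}, and I would organise the argument along exactly those lines.
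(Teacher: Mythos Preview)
The paper does not prove this statement at all: it is quoted verbatim as Corollary~5.23 of \cite{CKO1} and no argument is given here. What the paper does include, immediately after the statement, is a summary of the iterated-bundle algorithm from \cite[Theorem~5.12]{CKO1} (each $L_k$-block contributes an $S^{2k-1}$, the $L_1$-blocks give the $(S^1)^{\dim f}$ factor, and the remaining sphere factors assemble into the simply connected $Y_f$), together with the consequence \eqref{equ_circlefacctt} that $\Phi_\lambda^{-1}(\textbf{\textup{u}})\cong (S^1)^{\dim f}\times Y_f$. Your proposed strategy---reduce to a dimension count via isotropy, compute the fiber dimension through the row-by-row iterated bundle, and show that the sphere-factor dimensions sum to the box count of $\Gamma_\lambda$ exactly when the board is $L$-tileable---is precisely the architecture of that algorithm, and you have correctly identified the two substantive steps (the local normal forms producing the odd spheres, and the combinatorial inequality) as the work carried out in \cite{CKO1}.

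One small correction: in the non-fillable case the obstruction is not that a cluster contributes a flag-type quotient of smaller dimension, but simply that some boxes of the board are not covered by any admissible $L$-block (as in $\gamma_g$ of Example~\ref{example_L_fill}); the corresponding sphere factors are absent rather than replaced, so the total dimension drops below $N$ and the isotropic fiber is not Lagrangian.
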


\begin{remark}\label{remark_sym}
The face $\gamma_f$ corresponding to $f$ cuts $\Gamma_\lambda$ into regions. 
By the min-max principle, there exists a \emph{unique} point in each region that is closest from the origin. We say such a point the \emph{bottom-left} vertex of a region.
If $f$ is Lagrangian, $\gamma_f$ is $L$-block fillable and hence each cut region is symmetric with respect to the line with slope $1$ passing through its bottom-left vertex. 
\end{remark}

\begin{example}\label{example_L_fill}
	Let $\lambda = \{6,4,2,0,-2,-4,-6\}$. Then the co-adjoint oribt $\mcal{O}_\lambda$ is diffeomorphic to a complete flag variety. Consider two faces $\gamma_f$ and 
	$\gamma_g$ of $\Gamma_\lambda$ and their fillings with $L$-blocks as in Figure~\ref{figure_fillingwithL}. Since the filling of $\gamma_f$ covers whole $\gamma_f$, 
	any fiber over a point in the relative interior of $f$ is Lagrangian. On the other hand, one cannot fill the regions $\square^{(1,5)}$ and $\square^{(5,1)}$ in $\gamma_g$
	(white area in the most right one in Figure \ref{figure_fillingwithL}) using $L_1$-blocks since the second condition of \eqref{equation_L_block} is violated.
	Thus $\gamma_g$ is \emph{not} fillable by $L$-blocks and any fiber over a point in $g$ is not Lagrangian. 
	\begin{figure}[h] 
		\scalebox{0.9}{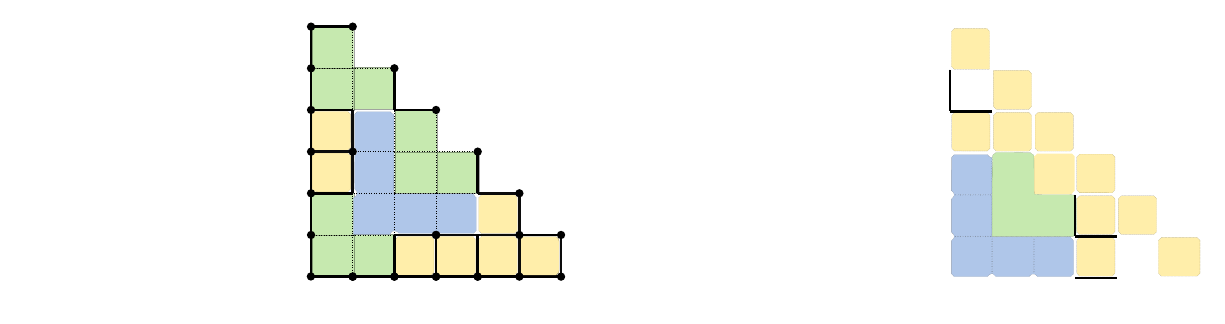}
		\caption{\label{figure_fillingwithL} Filling with $L$-blocks}
	\end{figure}
\end{example}

Now we explain the algorithm provided in \cite[Theorem 5.12]{CKO1} that decodes the topology of 
$\Phi_\lambda^{-1}(\textbf{\textup{u}})$ from the fillings of $\gamma_f$ by $L$-blocks. 

 \begin{itemize} 
\item {\bf Step 1.} For each $L_k$-block located at $(i,j) \in (\Z_+)^2$, we associate $S_{(i,j)}^{2k-1}$, the $(2k-1)$-dimensional sphere labeled by $(i,j)$. 
\item {\bf Step 2.} For each integer $\ell > 1$, we denote by 
	\[
		F_\ell := \prod_{i+j+k = \ell+1} S_{(i,j)}^{2k-1},
	\]
	which is a product of odd spheres.
\item {\bf Step 3.} Then $\Phi_\lambda^{-1}(\textbf{\textup{u}})$ is the total space of an iterated bundle 
	\[
		E_n \rightarrow E_{n-1} \rightarrow \cdots \rightarrow E_2 \rightarrow E_1 = \mathrm{point}
	\]
	where $E_\ell$ is an $F_\ell$-bundle over $E_{\ell-1}$ for every $\ell = 2,\cdots, n$. \vspace{0.5cm} 
\end{itemize}
Moreover, \cite[Theorem 7.9]{CKO1} tells us that the filling of $\gamma_f$ has exactly $(\dim f)$-number of $L_1$-blocks and 
every $S^1$-factor corresponding to each $L_1$-block is a trivial factor so that 
\begin{equation}\label{equ_circlefacctt}
	\Phi_\lambda^{-1}(\textbf{\textup{u}}) \cong (S^1)^{\dim f} \times Y_f
\end{equation}
where $Y_f$ is the total space of an iterated bundle of products of odd spheres. 
As a corollary, we obtain the following.

\begin{corollary}\label{prposition_torifactors}
Let $\gamma_f$ be the corresponding face of a face $f$ in $\Delta_\lambda$. 
\begin{itemize}
\item If the filling of $\gamma_f$ with $L$-blocks does \emph{not} contain any $L$-blocks, then $f$ is a vertex and moreover the fiber over the vertex is a point. 
\item If the filling of $\gamma_f$ with $L$-blocks does \emph{not} contain any $L_k$-blocks for $k \geq 2$, then the fiber over any point in the relative interior of $f$ is a torus of dimension $\dim f$.
\end{itemize}
\end{corollary}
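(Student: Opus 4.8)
The plan is to deduce both assertions directly from the iterated-bundle description in Steps~1--3 above, combined with the two structural facts recorded just before the corollary (both from \cite[Theorem 7.9]{CKO1}): the filling of $\gamma_f$ by $L$-blocks contains exactly $\dim f$ many $L_1$-blocks, and these contribute precisely the trivial factor $(S^1)^{\dim f}$ in the splitting $\Phi_\lambda^{-1}(\textbf{\textup{u}}) \cong (S^1)^{\dim f} \times Y_f$ of \eqref{equ_circlefacctt}, where $Y_f$ is the total space of the iterated bundle assembled only from the odd spheres $S^{2k-1}_{(i,j)}$ with $k \ge 2$. In particular $\dim \Phi_\lambda^{-1}(\textbf{\textup{u}}) = \dim f + \dim Y_f$ with $\dim Y_f = \sum_{k \ge 2}(2k-1)\cdot\#\{\, L_k\text{-blocks in the filling of }\gamma_f \,\}$, and I emphasize that nowhere does this require $\gamma_f$ to be fillable, so it applies to an arbitrary face $f$.

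For the first bullet, suppose the filling of $\gamma_f$ contains no $L$-block whatsoever. Then it contains no $L_1$-block, so $\dim f = 0$ and $f$ is a vertex of $\Delta_\lambda$; likewise $\dim Y_f = 0$, so $\Phi_\lambda^{-1}(\textbf{\textup{u}})$ is $0$-dimensional. Since $\Phi_\lambda^{-1}(\textbf{\textup{u}}) \cong (S^1)^{0}\times Y_f = Y_f$ is connected by Theorem~\ref{theorem_GC_generalfacts}(4) (as $\pi_1(Y_f) = 0$), being a connected $0$-manifold it is a single point. Equivalently, with no blocks present each product $F_\ell$ of Step~2 is an empty product, hence a point, so every stage $E_\ell$ of the iterated bundle of Step~3 equals $E_{\ell-1}$ and $\Phi_\lambda^{-1}(\textbf{\textup{u}}) = E_n$ collapses to $E_1 = \mathrm{point}$.

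For the second bullet, suppose the filling of $\gamma_f$ contains no $L_k$-block with $k \ge 2$; then the only blocks present are the $\dim f$ many $L_1$-blocks. By the splitting \eqref{equ_circlefacctt}, $\Phi_\lambda^{-1}(\textbf{\textup{u}}) \cong (S^1)^{\dim f} \times Y_f$, and $Y_f$ is an iterated bundle of products of the spheres $S^{2k-1}$ with $k \ge 2$. As there are no such blocks, every product entering $Y_f$ is empty, so $Y_f$ is a point and $\Phi_\lambda^{-1}(\textbf{\textup{u}}) \cong (S^1)^{\dim f}$, a torus of dimension $\dim f$.

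The argument is essentially bookkeeping on top of Theorem~\ref{theorem_GC_generalfacts}, the algorithm of \cite[Theorem 5.12]{CKO1}, and \cite[Theorem 7.9]{CKO1}, so I do not anticipate any analytic difficulty. The one point that deserves a little care is the first bullet: one must observe that the absence of $L$-blocks forces \emph{both} $\dim f = 0$ \emph{and} the triviality of the sphere part $Y_f$, and then invoke connectedness of the fiber to upgrade ``a finite set'' to ``a single point.''
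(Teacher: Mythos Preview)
Your proposal is correct and follows precisely the route the paper intends: the corollary is stated immediately after the iterated-bundle algorithm and the splitting \eqref{equ_circlefacctt} with no separate proof, so the argument is exactly the bookkeeping you spell out. One small remark: in the first bullet your detour through ``$0$-dimensional plus connected'' is unnecessary, since once every $F_\ell$ is an empty product the iterated bundle collapses to $E_1=\mathrm{point}$ on the nose---your ``Equivalently'' sentence already gives this directly.
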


Consequently, we have the following. 

\begin{proposition}[Proposition 7.11 in \cite{CKO1}]\label{theorem_homotopygroup}
	Let $f$ be a $k$-dimensional face of $\Delta_\lambda$ and $\textbf{\textup{u}} \in \mathring{f}$ be a point in its relative interior. Then the following hold. 
	\begin{enumerate}
		\item $\pi_1(\Phi_\lambda^{-1}(\textbf{\textup{u}})) = \Z^k$. 
		\item $\pi_2(\Phi_\lambda^{-1}(\textbf{\textup{u}})) = 0$. 
		\item $\pi_2(\mcal{O}_\lambda, \Phi_\lambda^{-1}(\textbf{\textup{u}})) = \pi_2(\mcal{O}_\lambda) \oplus \pi_1(\Phi_\lambda^{-1}(\textbf{\textup{u}})) = \pi_2(\mcal{O}_\lambda) \oplus \Z^k$. 
	\end{enumerate}
\end{proposition}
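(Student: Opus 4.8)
The plan is to extract all three statements from the concrete description of the fiber already recorded. By Theorem~\ref{theorem_GC_generalfacts}(4) and~\eqref{equ_circlefacctt} there is a diffeomorphism $\Phi_\lambda^{-1}(\textbf{\textup{u}}) \cong (S^1)^{k} \times Y_f$ in which $Y_f$ is the total space of an iterated bundle of products of odd-dimensional spheres and satisfies $\pi_1(Y_f) = \pi_2(Y_f) = 0$; the last point is precisely that, once the $L_1$-blocks have been split off into the $(S^1)^k$-factor, every sphere occurring in $Y_f$ has dimension $\ge 3$, so each fiber of the tower is $2$-connected and an induction up the tower via the long exact homotopy sequences of the successive fibrations applies.

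Granting this, (1) and (2) are immediate: homotopy groups of a product split as products, $(S^1)^k$ has contractible universal cover $\R^k$, and $Y_f$ is $2$-connected, so $\pi_1(\Phi_\lambda^{-1}(\textbf{\textup{u}})) = \pi_1((S^1)^k) \times \pi_1(Y_f) = \Z^k$ and $\pi_2(\Phi_\lambda^{-1}(\textbf{\textup{u}})) = \pi_2((S^1)^k) \times \pi_2(Y_f) = 0$.

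For (3) I would feed these into the long exact homotopy sequence of the pair $(\mcal{O}_\lambda, \Phi_\lambda^{-1}(\textbf{\textup{u}}))$,
\[
	\pi_2(\Phi_\lambda^{-1}(\textbf{\textup{u}})) \longrightarrow \pi_2(\mcal{O}_\lambda) \longrightarrow \pi_2(\mcal{O}_\lambda, \Phi_\lambda^{-1}(\textbf{\textup{u}})) \stackrel{\partial}{\longrightarrow} \pi_1(\Phi_\lambda^{-1}(\textbf{\textup{u}})) \longrightarrow \pi_1(\mcal{O}_\lambda).
\]
A partial flag manifold is simply connected --- for instance, from the fiber bundle $U(k_1)\times\cdots\times U(k_{r+1}) \to U(n) \to \mcal{O}_\lambda$ together with the surjectivity of the induced map $\pi_1(U(k_1)\times\cdots\times U(k_{r+1})) \to \pi_1(U(n))$ --- so $\pi_1(\mcal{O}_\lambda) = 0$, and the leftmost term vanishes by (2). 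Hence the sequence collapses to the short exact sequence
\[
	0 \longrightarrow \pi_2(\mcal{O}_\lambda) \longrightarrow \pi_2(\mcal{O}_\lambda, \Phi_\lambda^{-1}(\textbf{\textup{u}})) \stackrel{\partial}{\longrightarrow} \Z^k \longrightarrow 0,
\]
and, $\Z^k$ being free, it splits, giving $\pi_2(\mcal{O}_\lambda, \Phi_\lambda^{-1}(\textbf{\textup{u}})) \cong \pi_2(\mcal{O}_\lambda) \oplus \Z^k$.

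The step that calls for care is the last one: strictly, $\pi_2(\mcal{O}_\lambda, \Phi_\lambda^{-1}(\textbf{\textup{u}}))$ is a priori only a group, so before splitting off $\Z^k$ one should verify that this extension is one of abelian groups, equivalently that $\pi_1(\Phi_\lambda^{-1}(\textbf{\textup{u}}))$ acts trivially on it through the boundary action. The cleanest way to settle this in the present setting, and one that makes the splitting canonical, is to send the generator of $\pi_1(\Phi_\lambda^{-1}(\textbf{\textup{u}}))$ associated with a given $L_1$-block to the class of the gradient holomorphic disc bounded by the corresponding free $S^1$-orbit --- these orbits are the free orbits of the circle actions generated by the partial traces $\Psi_\lambda^{i,j}$, and the discs are exactly the gradient discs of Sections~\ref{secGradientJHolomorphicDisks} and~\ref{secMaslovIndicesAndChernNumbers} --- and to check that the resulting $k$ disc classes commute in $\pi_2(\mcal{O}_\lambda, \Phi_\lambda^{-1}(\textbf{\textup{u}}))$; everything else in the argument is formal.
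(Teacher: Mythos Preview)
This proposition is not proved in the paper; it is imported from \cite{CKO1} without argument, so there is no in-paper proof to compare against. Your derivation is the natural one: (1) and (2) are immediate from $\Phi_\lambda^{-1}(\textbf{\textup{u}}) \cong (S^1)^k \times Y_f$ with $Y_f$ $2$-connected (Theorem~\ref{theorem_GC_generalfacts}(4)), and the long exact sequence of the pair together with $\pi_1(\mcal{O}_\lambda)=0$ gives the short exact sequence underlying (3).

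Your caution about the group structure of $\pi_2(\mcal{O}_\lambda,L)$ is well placed---a central extension of $\Z^k$ by an abelian group need not be abelian once $k\ge 2$---but your proposed fix via commuting gradient-disc classes is more delicate than it looks: the circle actions generated by the partial traces $\Psi_\lambda^{a_i,b_i}$ are only defined on the open sets $\mcal{U}_{\lambda,-}^{a_i,b_i}$, and a gradient disc for the $i$-th action need not remain inside $\mcal{U}_{\lambda,-}^{a_j,b_j}$, so one cannot simply flow it by the $j$-th action to exhibit the commutation. For the only use this paper makes of (3), in the proof of Theorem~\ref{theorem_main}, the issue is harmless anyway: both $I_\omega$ and $\mu$ take values in abelian groups and hence factor through the abelianisation of $\pi_2(\mcal{O}_\lambda,L)$, and after abelianising the sequence splits because $\Z^k$ is free abelian. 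If one insists on the honest group-theoretic statement, one must show that the $\pi_1(L)$-action on $\pi_2(\mcal{O}_\lambda,L)$ is trivial; that is where the argument in \cite{CKO1} would be needed.
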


\vspace{0.2cm}
%------------------------------------------------------------------------------------
\section{Monotone Lagrangian Gelfand--Cetlin fibers}
\label{secClassificationOfMonotoneLagrangianFibers}

In this section, using the Maslov index formula derived in Section~\ref{secMaslovIndicesAndChernNumbers}, we classify all monotone Lagrangian fibers of $\Phi_\lambda$ for a monotone partial flag manifold $(\mcal{O}_\lambda, \omega_\lambda)$. 

By scaling and translating $\lambda$, without loss of generality, we may assume
\begin{equation}\label{equation_lambda_monotone}
  \lambda = (\underbrace{n-n_1, \cdots}_{k_1} ~,
  \underbrace{n-n_1-n_2, \cdots}_{k_2} ~, \cdots ~, \underbrace{n-n_{r-1}-n_r, \cdots}_{k_r} ~, \underbrace{-n_r, \cdots, -n_r}_{k_{r+1}} )
\end{equation}
so that $c_1(\mcal{O}_\lambda) = [\omega_\lambda]$ by Proposition \ref{proposition_monotone_lambda} with a choice of $m = 0$. 
The GC polytope $\Delta_\lambda$ 
is then a reflexive\footnote{A convex polytope $\Delta$ is called {\em reflexive} if $\Delta^* = \Delta$.} polytope, 
which has a unique interior integral lattice point $\textbf{\textup{u}}_{\Delta_\lambda} \in \mathring{\Delta}_\lambda$ 
such that the affine distances from $\textbf{\textup{u}}_{\Delta_\lambda}$ to each facet are all equal. 
In this sense, we call $\textbf{\textup{u}}_{\Delta_\lambda}$ the {\em center of $\Delta_\lambda$}. 
Choosing $\lambda$ in~\eqref{equation_lambda_monotone}, the center is explicitly written as 
\begin{equation}\label{equ_center_poly}
	\textbf{\textup{u}}_{\Delta_\lambda} = (u_{i,j} := j - i) \in \Delta_\lambda.
\end{equation}

Let $f$ be a Lagrangian face of $\Delta_\lambda$ of dimension $k$ for some $k \geq 0$ and 
$\gamma_f$ the corresponding face in the ladder diagram $\Gamma_\lambda$. 
Recall that the dimension of the face $\gamma_f$ coincides with the number of bounded regions in $\gamma_f$ by Theorem~\ref{theorem_ACK}. Note that there exists a unique rightmost vertex, a vertex farthest from the origin, of each bounded region by the min-max principle.
We label the upper rightmost vertex of each bounded region of $\gamma_f$ by $\{(a_1,b_1), \cdots, (a_k, b_k) \}$ respecting the following order
\begin{equation}\label{equation_labeling}
	\text{$i < j \,$ if and only if either} \quad  \begin{cases} \text{$a_i + b_i = a_j + b_j$ and $a_i < a_j$, or } \\ a_i + b_i > a_j + b_j. \end{cases}
\end{equation}
We denote the set of such double indices by
\begin{equation}\label{equ_doubleindices}
\mcal{I}_f := \{(a_1,b_1), \cdots, (a_k, b_k) \}.
\end{equation}

Under the choice of $\lambda$ as in~\eqref{equation_lambda_monotone}, the {\em center} of a face $f$ is a point $\textbf{\textup{u}}_f = (u_{i,j})$ defined by 
	\begin{equation}\label{equ_aibi}
		u_{a_i, b_i} = b_i - a_i, \quad \mbox{for all } (a_i, b_i) \in \mcal{I}_f
	\end{equation}	
	 Note that the positive paths in the face $\gamma_f$ cut the ladder diagram $\Gamma_\lambda$ into several regions. All components in any bounded region have same values
	(determined by~$u_{a_i, b_i}$'s) and all components in any unbounded regions are determined by the sequence $\lambda$ in~\eqref{equation_lambda_monotone}. 
	Therefore,~\eqref{equ_aibi} locates a unique point $\textbf{\textup{u}}_f$ in the face $f$.

\begin{example}
Choose $\lambda = (5,5,2,0,-2,-4,-6)$ as an example and consider the face $\gamma_f$ in Figure~\ref{figure_rightmost}.
It has eight bounded regions and their upper rightmost vertices are ordered as follows$\colon$ 
$$
\begin{cases}
(a_1, b_1) = (2, 5), (a_2, b_2) = (3,4), (a_3, b_3) = (6,1), (a_4, b_4) = (3,3),\\ 
(a_5, b_5) = (5,1), (a_6, b_6) = (4,1), (a_7, b_7) = (1,3), (a_8, b_8) = (3,1).
\end{cases}
$$
The center of $\gamma_f$ is given as in the second figure in Figure~\ref{figure_rightmost}.
\end{example}

	\begin{figure}[H]
		\scalebox{0.65}{
		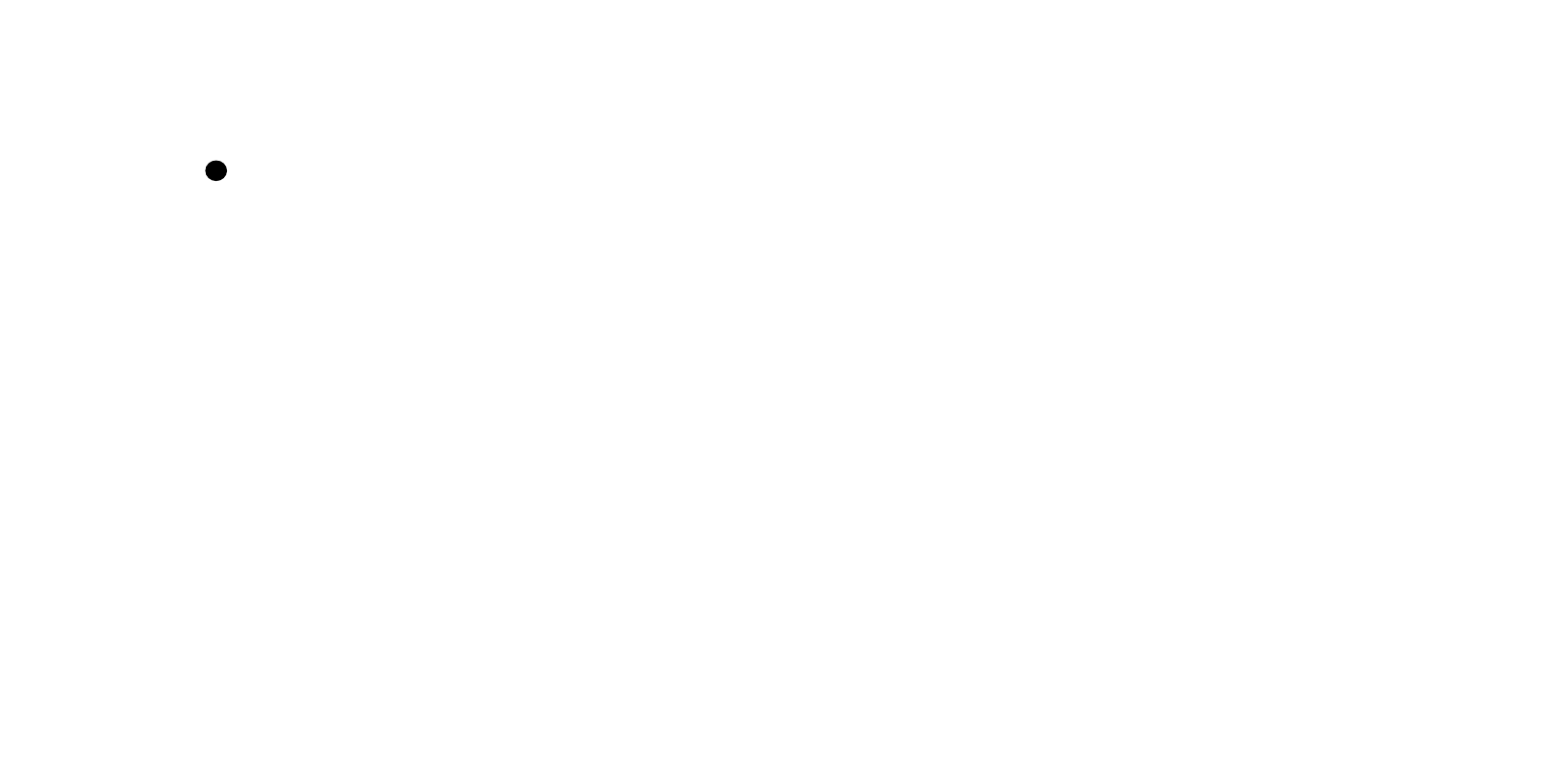}
		\caption{\label{figure_rightmost} The ordering of the upper rightmost vertices of the bounded regions and the center}
	\end{figure}

We are now ready to state the main theorem of this section. 

\begin{theorem}[Classification of monotone Lagrangian fibers]\label{theorem_main}
	Let $f$ be a Lagrangian face of the Gelfand--Cetlin polytope $\Delta_\lambda$ for $\lambda$ in~\eqref{equation_lambda_monotone}. For $\textbf{\textup{u}} \in \mathring{f}$, the fiber
	$\Phi_\lambda^{-1}(\textbf{\textup{u}})$ is monotone Lagrangian if and only if $\textbf{\textup{u}}$ is the center of $f$. 
\end{theorem}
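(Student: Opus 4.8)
The plan is to compute the Maslov index $\mu(\beta)$ for an arbitrary disc class $\beta \in \pi_2(\mathcal{O}_\lambda, \Phi_\lambda^{-1}(\textbf{\textup{u}}))$ and match it against the symplectic area, invoking Lemma~\ref{lemma_monotonicity} (noting $c_1(T\mathcal{O}_\lambda) = [\omega_\lambda]$ by our normalization of $\lambda$ and that partial flag manifolds are not symplectically aspherical). By Proposition~\ref{theorem_homotogroup}(3), $\pi_2(\mathcal{O}_\lambda, \Phi_\lambda^{-1}(\textbf{\textup{u}})) \cong \pi_2(\mathcal{O}_\lambda) \oplus \mathbb{Z}^k$, where the $\mathbb{Z}^k$ summand is generated by basic disc classes $\beta_1, \dots, \beta_k$, one for each $L_1$-block (equivalently, each bounded region) of $\gamma_f$. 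For the $\pi_2(\mathcal{O}_\lambda)$ part, monotonicity of $\omega_\lambda$ already gives $2\omega_\lambda(\alpha) = 2 c_1(\alpha) = \mu(\alpha)$, so the condition~\eqref{equation_monotonicity} is automatic there. Hence everything reduces to checking the $k$ basic disc classes: we must show $2\,\omega_\lambda(\beta_i) = \mu(\beta_i)$ for all $i$ precisely when $\textbf{\textup{u}}$ is the center of $f$.

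\textbf{The basic discs as gradient discs.}
The key point is that each basic disc $\beta_i$ is realized by a gradient holomorphic disc for the locally-defined Hamiltonian $S^1$-action generated by the partial trace $\Psi_\lambda^{a_i,b_i}$ (Lemma~\ref{lemma_Ham_action}), where $(a_i,b_i) \in \mathcal{I}_f$ is the upper rightmost vertex of the $i$-th bounded region. First I would verify that the $S^1$-orbit inside $\Phi_\lambda^{-1}(\textbf{\textup{u}})$ generated by $\Psi_\lambda^{a_i,b_i}$ flows, under $J\xi$, to a fixed point $z_i$ lying in the level set on which the corresponding eigenvalue(s) collide — using the facts flagged after Lemma~\ref{lemma_Ham_action}, namely that $\mathcal{U}_{\lambda,-}^{i,j}$ contains the maximal fixed component of this action and the action is semifree near it. The maximal fixed component corresponds (via the ladder-diagram/$L$-block description) to collapsing the $i$-th bounded region, i.e. setting the entries in that region equal to those forced by the surrounding regions and by $\lambda$. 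Applying Corollary~\ref{corollary_Maslov_index_formula} (semifree case, maximum), $\mu(\beta_i)$ equals the codimension of the maximal fixed component, which is twice the number of GC-components that become smooth/non-trivial over that region — and by the $L$-block fillability of $\gamma_f$ (Theorem~\ref{theorem_Lblockfillablemain}), this codimension is computable purely combinatorially from $\gamma_f$: it is $2$ times the number of boxes $\square^{(p,q)}$ in the $i$-th region (or a comparably explicit count depending on the precise shape of the region).

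\textbf{Matching areas.}
On the area side, $\omega_\lambda(\beta_i) = H(z_i) - H(\textbf{\textup{u}})$ by the Archimedes computation in Section~\ref{secGradientJHolomorphicDisks}, which is the difference between the value of $\Psi_\lambda^{a_i,b_i}$ at the fixed point and at $\textbf{\textup{u}}$ — an explicit affine-linear function of the coordinates $u_{a_j,b_j}$ and of $\lambda$. Writing out $2(H(z_i) - H(\textbf{\textup{u}})) = \mu(\beta_i)$ for every $i$ yields a linear system in the unknowns $(u_{a_1,b_1},\dots,u_{a_k,b_k})$ whose unique solution I expect to be exactly $u_{a_i,b_i} = b_i - a_i$ as in~\eqref{equ_aibi}, i.e. the center of $f$. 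The cleanest route is to exploit the symmetry from Remark~\ref{remark_sym}: each cut region of a Lagrangian $\gamma_f$ is symmetric about the slope-$1$ line through its bottom-left vertex, and one checks that the monotonicity equation for $\beta_i$ becomes a ``balancing'' condition forcing $u_{a_i,b_i}$ to be the value at the rightmost vertex of that symmetric region, which is $b_i - a_i$. I would organize this as: (i) prove the "if" direction by a direct substitution $u_{a_i,b_i} = b_i - a_i$ and verifying $2\omega_\lambda(\beta_i) = \mu(\beta_i)$, using the symmetry to evaluate $H(z_i)$; (ii) prove the "only if" by showing the $k$ equations are independent and triangular with respect to the ordering~\eqref{equation_labeling} (the ordering is tailored so that each equation introduces one new variable $u_{a_i,b_i}$ in terms of already-determined ones), hence have at most one solution.

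\textbf{Main obstacle.}
The hard part will be step (i)/(ii): rigorously identifying the fixed point $z_i$ reached by the gradient flow of $\Psi_\lambda^{a_i,b_i}$ and computing the codimension of its fixed component in terms of the $L$-block filling — in particular, checking that the action really is semifree near this maximal component (so Corollary~\ref{corollary_Maslov_index_formula} applies) and that the disc stays in the open set $\mathcal{U}_{\lambda,-}^{a_i,b_i}$ where the action is defined, so that Theorem~\ref{thmx_Maslovindex} / Theorem~\ref{theorem_Maslov_index_formula} is legitimately applicable. The combinatorics relating the shape of a cut region of $\gamma_f$ to both the codimension (Maslov side) and the eigenvalue differences (area side) is where the bookkeeping is heaviest; the triangularity with respect to~\eqref{equation_labeling} is the structural fact that makes the linear algebra in (ii) painless once the individual computations in (i) are in hand.
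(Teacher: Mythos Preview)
Your overall architecture matches the paper's: decompose $\pi_2(\mcal{O}_\lambda,L_{\textbf{u}})$, handle the spherical part by monotonicity of $\omega_\lambda$, realise generators of the $\Z^k$ part as gradient discs for the Hamiltonian $S^1$-actions of the partial traces $\Psi_\lambda^{a_i,b_i}$, and compare Maslov index (via Corollary~\ref{corollary_Maslov_index_formula}) with symplectic area (via the Duistermaat--Heckman/Archimedes identity). The paper also needs, and you anticipate, the semifreeness near the maximal fixed component and the fact that the disc stays in $\mcal{U}_{\lambda,-}^{a_i,b_i}$; those are Lemmas~\ref{lemma_smooth_on_Lag}, \ref{lemma_codim} and Proposition~\ref{proposition_semifreee}. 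One technical point you do not mention: a generic free orbit $\sigma_i\subset L_{\textbf{u}}$ need not lie in the unstable set of $Z_\lambda^{a_i,b_i}$, so the paper first perturbs via a short Lagrangian isotopy (Lemma~\ref{lemma_isotopy}) to a nearby orbit $\sigma_i'$ whose gradient trajectory does converge.

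The genuine gap is your identification of the maximal fixed component. The maximum of $\Psi_\lambda^{a_i,b_i}=\Phi_\lambda^{1,a_i+b_i-1}+\cdots+\Phi_\lambda^{a_i,b_i}$ is \emph{not} attained by ``collapsing the $i$-th bounded region of $\gamma_f$''; it is attained on the face $F$ of $\Delta_\lambda$ where each eigenvalue $\Phi_\lambda^{j,a_i+b_i-j}$ is pinned to its global maximum $\lambda_j$ (see~\eqref{equ_facess}). This face has nothing to do with the shape of $\gamma_f$, and its codimension is not ``twice the number of boxes in the $i$-th region''. The paper's Lemma~\ref{lemma_codim} computes instead
\[
\mathrm{codim}_\R Z_\lambda^{a_i,b_i}=2\bigl(c_i-\Psi_\lambda^{a_i,b_i}(\textbf{u}_{\Delta_\lambda})\bigr),\qquad c_i:=\lambda_1+\cdots+\lambda_{a_i},
\]
so that $\mu(\beta_i)=2\bigl(c_i-\Psi_\lambda^{a_i,b_i}(\textbf{u}_{\Delta_\lambda})\bigr)$ while $\omega_\lambda(\beta_i)=c_i-\Psi_\lambda^{a_i,b_i}(\textbf{u})$. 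Thus monotonicity reduces to $\Psi_\lambda^{a_i,b_i}(\textbf{u})=\Psi_\lambda^{a_i,b_i}(\textbf{u}_{\Delta_\lambda})$ for all $i$, and the paper shows (Lemma~\ref{lemma_charpartialtraces}) that this system characterises the center of $f$; the proof of that lemma is exactly the ``triangular with respect to~\eqref{equation_labeling}'' induction you sketch, but applied to the partial traces rather than to box-counts in the bounded regions. Once you correct the description of $Z_\lambda^{a_i,b_i}$ and route the combinatorics through $\Psi_\lambda^{a_i,b_i}(\textbf{u}_{\Delta_\lambda})$, your outline coincides with the paper's proof.
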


\begin{example}
Let $\lambda = (3,3,3, -3,-3,-3)$. The co-adjoint orbit $\mcal{O}_\lambda$ is $\mathrm{Gr}(3,\C^6)$. 
Because of Theorem~\ref{theorem_Lblockfillablemain}, by classifying the $L$-block fillable faces, one can classify all Lagrangian faces of $\Delta_\lambda$. In this case, we have seven Lagrangian faces. 
Theorem~\ref{theorem_main} says that the fiber at the center of each Lagrangian face is monotone. Moreover, they are all monotone Lagrangian GC fibers and listed in Figure~\ref{figure_monotoneGr(3,6)}. It admits one monotone $T^9$-fiber, one monotone $U(3)$-fiber, one monotone $(S^3)^2 \times T^3$-fiber, and four monotone $S^3 \times T^6$-fibers. 
	\begin{figure}[H]
		\scalebox{0.6}{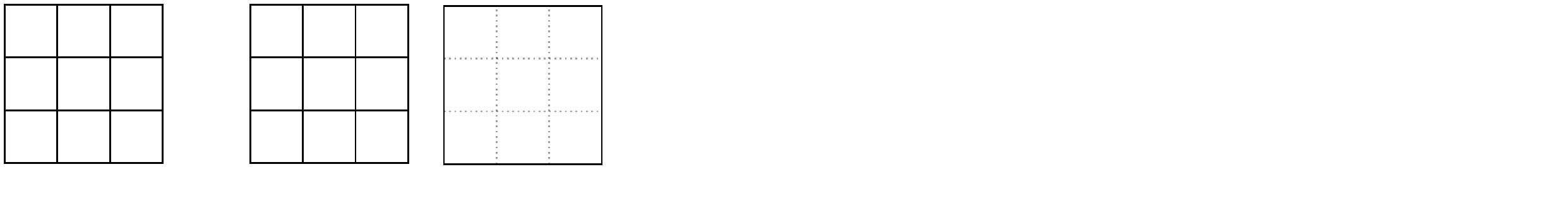}
		\caption{\label{figure_monotoneGr(3,6)} Monotone Lagrangian fibers  in $\mathrm{Gr}(3,\C^6)$}
	\end{figure}
\end{example}

\begin{example}
Let $\lambda = (4, 2, 0, -2, -4)$. The co-adjoint orbit $\mcal{O}_\lambda$ is then a complete flag manifold $\mcal{F}\ell(5)$. 
The following figure \ref{figure_F5} illustrates all monotone Lagrangian GC fibers in $\mcal{F}\ell(5)$.
\begin{figure}[H]
	\scalebox{0.6}{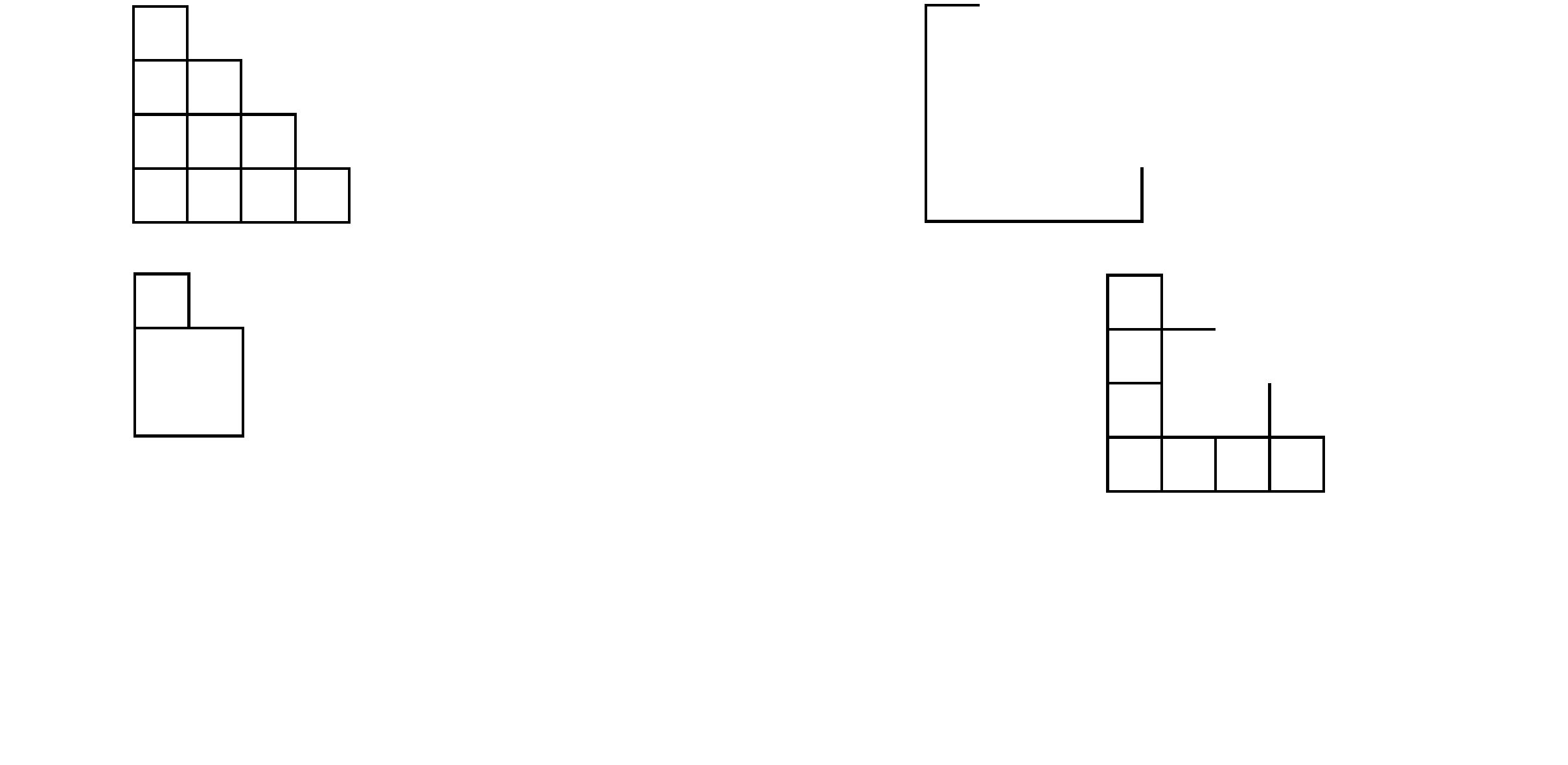}
	\caption{\label{figure_F5} Monotone Lagrangian fibers in $\mcal{F}\ell(5)$}
\end{figure}
\end{example}

The remaining part of this section is reserved for proving Theorem~\ref{theorem_main}. 
We begin by characterizing the center of a Lagrangian face in terms of partial traces $\Psi^{a,b}_\lambda$'s in~\eqref{equ_psiijpartial}.
Notice that the partial trace $\Psi^{a,b}_\lambda$ factors through $\R^{\dim \Delta_\lambda}$. That is,
we have the following commutative diagram
	 \begin{equation}\label{equ_factorthru}
		    \xymatrix{
			      \mcal{O}_\lambda  \ar[dr]_{\Phi_\lambda} \ar[rr]^{\Psi^{a,b}_\lambda}
			      & & \R  \\ 
			      & \R^{\dim \Delta_\lambda} \ar[ur]_{\overline{\Psi}^{a,b}_\lambda} &
			    }
	  \end{equation}
where $\overline{\Psi}^{a,b}_\lambda \colon \R^{\dim \Delta_\lambda} \to \R$ is given by $u_{1, a+b-1} + u_{2, a+b-2} + \cdots + u_{a,b}$. By abuse of notation, we denote $\overline{\Psi}^{a,b}_\lambda$ by ${\Psi}^{a,b}_\lambda$ from now on. In a similar vein, the projection $\overline{\Phi}^{a,b}_\lambda \colon \R^{\dim \Delta_\lambda} \to \R$ given by $\textbf{\textup{u}} \mapsto {{u}}_{a,b}$ is denoted by ${\Phi}^{a,b}_\lambda$

\begin{lemma}\label{lemma_charpartialtraces}
Let $f$ be a Lagrangian face of $\Delta_\lambda$ when $\lambda$ is given in~\eqref{equation_lambda_monotone}. 
A point $\textbf{\textup{u}}$ in the face ${f}$ is the center of $f$ if and only if
	\begin{equation}\label{equ_traces}
		\Psi^{a,b}_\lambda(\textbf{\textup{u}}_{\Delta_\lambda}) = \Psi^{a,b}_\lambda (\textbf{\textup{u}})
	\end{equation}
	for any $(a, b) \in \mcal{I}_f$ where $\mcal{I}_f$ is in~\eqref{equ_doubleindices}.
\end{lemma}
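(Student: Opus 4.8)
The plan is to deduce the equivalence from a single affine‑algebraic observation: on the affine hull of $f$ the partial traces $\Psi^{a,b}_\lambda$ with $(a,b)\in\mcal{I}_f$ form a coordinate system, and the center $\textbf{\textup{u}}_f$ is precisely the point whose $\Psi$‑coordinates agree with those of $\textbf{\textup{u}}_{\Delta_\lambda}$. First I would fix coordinates on $f$. By Theorem~\ref{theorem_ACK} and the description of $\Psi$ preceding it, a point of $f$ assigns a common value to all boxes of $\Gamma_\lambda$ lying in each region that $\gamma_f$ cuts out, the bounded rooms being free and the rest being pinned by $\lambda$; since $f$ is Lagrangian, $\gamma_f$ is $L$‑block fillable, so every box of the board lies in one of the $k=\dim f$ bounded rooms, while the constant GC components are determined by $\lambda$ on all of $\Delta_\lambda$. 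Hence $\mathrm{aff}(f)$ is coordinatized by the room values $u_{a_1,b_1},\dots,u_{a_k,b_k}$, and each $\Psi^{a_i,b_i}_\lambda$ restricts to $\mathrm{aff}(f)$ as $c_i+\sum_{j=1}^k M_{ij}\,u_{a_j,b_j}$, where $M_{ij}\in\Z_{\ge 0}$ counts the boxes of the staircase $\square^{(1,a_i+b_i-1)},\dots,\square^{(a_i,b_i)}$ contained in the room $R_j$ whose upper‑rightmost vertex is $(a_j,b_j)$.

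Next I would check that the center satisfies \eqref{equ_traces}. Set $\textbf{\textup{v}}=\textbf{\textup{u}}_f-\textbf{\textup{u}}_{\Delta_\lambda}$; since both points share every constant GC component, $\textbf{\textup{v}}$ is supported on the board, with $v_{p,q}=(b_j-a_j)-(q-p)$ on each box $\square^{(p,q)}\subset R_j$. For $(a_i,b_i)\in\mcal{I}_f$ I would compute $\Psi^{a_i,b_i}_\lambda(\textbf{\textup{v}})=\sum_{\ell=1}^{a_i}v_{\ell,\,a_i+b_i-\ell}$ by grouping the staircase boxes according to their room. Because $\gamma_f$ is $L$‑block fillable, each room $R_j$ is symmetric about the slope‑$1$ line through its bottom‑left vertex (Remark~\ref{remark_sym}); this reflection is an isometry of $R_j$, it fixes the anti‑diagonal $\{x+y=a_i+b_i\}$, and it preserves taxi‑cab distance to the origin, so uniqueness of the farthest vertex of $R_j$ (the min–max principle) forces the axis to be $x-y=a_j-b_j$, i.e. to pass through $(a_j,b_j)$. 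Consequently the staircase boxes inside $R_j$ form a set invariant under that reflection, the two values of $v$ on a reflected pair sum to zero, and a self‑paired box contributes $0$; summing over rooms yields $\Psi^{a_i,b_i}_\lambda(\textbf{\textup{u}}_f)=\Psi^{a_i,b_i}_\lambda(\textbf{\textup{u}}_{\Delta_\lambda})$.

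Finally I would show \eqref{equ_traces} has a unique solution in $\mathrm{aff}(f)$, which by the previous paragraph is then forced to be $\textbf{\textup{u}}_f\in f$. Order the rooms by \eqref{equation_labeling}; I claim $M=(M_{ij})$ is lower triangular with positive diagonal. Indeed $M_{ii}\ge 1$ since $\square^{(a_i,b_i)}\subset R_i$, and $M_{ij}\neq 0$ forces $R_j$ to contain a box of corner‑sum $a_i+b_i$, hence $a_i+b_i\le a_j+b_j$ (all boxes of $R_j$ have corner‑sum at most $a_j+b_j$); in the equality case the unique such box of $R_j$ is $\square^{(a_j,b_j)}$ — again by uniqueness of the farthest vertex — and it lies on the $(a_i,b_i)$‑staircase only when $a_j\le a_i$. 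In every case $j\le i$, so $M$ is invertible and \eqref{equ_traces} pins down a single point of $\mathrm{aff}(f)$, completing the proof. The main obstacle is the second step: making the reflection argument rigorous, i.e. identifying the symmetry axis of each Lagrangian room with the line through its farthest vertex and verifying that staircase boxes falling outside the board contribute nothing because $\textbf{\textup{u}}_f$ and $\textbf{\textup{u}}_{\Delta_\lambda}$ share all constant GC components; the triangularity in the third step is then routine bookkeeping with the min–max order.
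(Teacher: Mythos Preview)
Your backward direction is correct and is a clean repackaging of the paper's induction: writing $\Psi^{a_i,b_i}_\lambda|_{\mathrm{aff}(f)}=c_i+\sum_j M_{ij}\,u_{a_j,b_j}$ and observing that $M$ is lower triangular with positive diagonal under the ordering \eqref{equation_labeling} is exactly the content of the paper's inductive step, stated more transparently.

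There is, however, a genuine gap in your forward direction. The assertion that ``every box of the board lies in one of the $k=\dim f$ bounded rooms'' is false. Cutting the board along $\gamma_f$ produces the $k$ bounded rooms $R_1,\dots,R_k$ \emph{together with possible further regions} whose boundary uses edges of the top-right staircase of $\Gamma_\lambda$ not contained in $\gamma_f$; on $f$ these extra regions carry a value pinned to some $\lambda_m$, not a free parameter. A concrete instance is the $(S^3)^3\times S^1$ face of $\mcal{F}\ell(5)$ in Figure~\ref{figure_F5}: there $\dim f=1$, the unique bounded room is the $2\times 2$ block at the origin with vertex $(a_1,b_1)=(2,2)$, while the two $L$-shaped pieces carrying values $\pm 2$ are \emph{not} bounded rooms. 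The staircase box $\square^{(1,3)}$ for $(a_1,b_1)=(2,2)$ lies in one of these pinned regions, so your reflection argument --- which you formulate only for the rooms $R_j$ --- does not apply to it, and your remark about ``boxes falling outside the board'' does not cover it either, since $\square^{(1,3)}$ is inside the board.

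What is missing is precisely what the paper checks in \eqref{equ_baseunboundedcase}: using the explicit monotone form \eqref{equation_lambda_monotone} of $\lambda$, the pinned value in any such region equals $q-p$ along its axis of symmetry. Once this is established, your reflection argument extends verbatim, because $v_{p,q}=(\text{pinned value})-(q-p)$ is again odd under the reflection. This step is not mere bookkeeping: it is exactly where the monotonicity hypothesis $c_1(T\mcal{O}_\lambda)=[\omega_\lambda]$ enters the proof, and without it the forward implication is incomplete.
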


\begin{proof}
Suppose that $\textbf{\textup{u}}$ is the center of a Lagrangian face $f$.
We have to confirm that~\eqref{equ_traces} holds for each $(a,b) \in \mcal{I}_f$.
We cut the region bounded by $\Gamma_\lambda$ along the face $\gamma_f$ into several regions.
By the min-max principle, there exists a \emph{unique} point in each cut region that is closest from the origin. We say such a point the \emph{bottom-left} vertex of a cut region.
Since $f$ is Lagrangian, $\gamma_f$ is $L$-block fillable and hence each cut region is symmetric with respect to the line with slope one passing through its bottom-left vertex. 
For any $(a, b) \in \mcal{I}_f$, consider the following set of double indices
\[
	{\bf LU}_{(a,b)}\footnote{${\bf LU}$ stands for ``left-upper''.} := \left\{ {(j, a + b - j)} ~|~ j =1, \cdots, a \right\}
\] 
in the anti-diagonal given by $x + y = a + b$. The set ${\bf LU}_{(a,b)}$ is grouped into $s$ disjoint subsets 
\[
	{\bf LU}_{(a,b)} = \bigcup_{i=1}^s  \left\{ {(j, a + b - j)} ~|~ j =r_{i-1} + 1, \cdots, r_i \right\} \quad \text{for some} \quad r_0 := 0 < r_1 < \cdots < r_s := a
\]
such that each subset is contained in exactly one cut region. 
By symmetry of cut regions, we have
\[
	\begin{array}{ccl}\vs{0.1cm}
		\Phi^{r_{i-1} + 1,a + b - r_{i-1} - 1}_\lambda (\textbf{\textup{u}}) = \cdots = \Phi^{r_{i}, a + b - r_{i}}_\lambda (\textbf{\textup{u}}) & = & 
		\ds \frac{(a + b - 2r_{i-1} - 2)+ (a + b - 2 r_{i})}{2} 
		\\ \vs{0.1cm}
		& =& a + b - r_{i-1} - r_i - 1.
	\end{array}
\]
Thus,
\begin{equation}\label{equ_psiabri}
\Psi^{r_i, a+b-r_i}_\lambda (\textbf{\textup{u}}) - \Psi^{r_{i-1}, a+b-r_{i-1}}_\lambda (\textbf{\textup{u}}) = \sum_{j=1}^{r_i - r_{i-1}} \Phi^{r_{i-1} + j,a + b - r_{i-1} - j}_\lambda (\textbf{\textup{u}}) = (r_i - r_{i-1}) (a + b - r_{i-1} - r_i - 1).
\end{equation}
Since $\Phi^{i,j}_\lambda (\textbf{\textup{u}}_{\Delta_\lambda}) = j - i$ by~\eqref{equ_center_poly},
we see
\begin{equation}\label{equ_psiabri2}
\Psi^{r_i, a+b-r_i}_\lambda (\textbf{\textup{u}}_{\Delta_\lambda}) - \Psi^{r_{i-1}, a+b-r_{i-1}}_\lambda (\textbf{\textup{u}}_{\Delta_\lambda}) = \sum_{j = r_{i-1} + 1}^{r_i} (a + b - 2j) = (r_i - r_{i-1}) (a + b - r_{i-1} - r_i - 1).
\end{equation}
Therefore, by comparing~\eqref{equ_psiabri} and~\eqref{equ_psiabri2}, we derive
$$
\Psi^{r_i, a+b-r_i}_\lambda (\textbf{\textup{u}}) - \Psi^{r_{i-1}, a+b-r_{i-1}}_\lambda (\textbf{\textup{u}}) = \Psi^{r_i, a+b-r_i}_\lambda (\textbf{\textup{u}}_{\Delta_\lambda}) - \Psi^{r_{i-1}, a+b-r_{i-1}}_\lambda (\textbf{\textup{u}}_{\Delta_\lambda})
$$
for each $i$. In particular,~\eqref{equ_traces} follows$\colon$
\begin{align*}
\Psi^{a,b}_\lambda (\textbf{\textup{u}}) &= \sum_{i=1}^s \left(\Psi^{r_i, a+b-r_i}_\lambda (\textbf{\textup{u}}) - \Psi^{r_{i-1}, a+b-r_{i-1}}_\lambda (\textbf{\textup{u}})\right) \\
&=  \sum_{i=1}^s \left( \Psi^{r_i, a+b-r_i}_\lambda (\textbf{\textup{u}}_{\Delta_\lambda}) - \Psi^{r_{i-1}, a+b-r_{i-1}}_\lambda (\textbf{\textup{u}}_{\Delta_\lambda}) \right) = \Psi^{a,b}_\lambda(\textbf{\textup{u}}_{\Delta_\lambda}).
\end{align*}

Conversely, suppose that~\eqref{equ_traces} holds for all $(a,b) \in \mcal{I}_f$.
For $\textbf{\textup{u}} \in f$, it suffices to check that 
\begin{equation}\label{equ_conversegoal}
	\Phi^{a_i,b_i}_\lambda(\textbf{\textup{u}}_{\Delta_\lambda}) = \Phi^{a_i,b_i}_\lambda (\textbf{\textup{u}})
\end{equation}
for any $(a_i, b_i) \in \mcal{I}_f$ in order to prove that $\textbf{\textup{u}}$ is the center of $f$. 
It is because~\eqref{equ_center_poly} yields 
$$
\Phi^{a_i,b_i}_\lambda (\textbf{\textup{u}}) = \Phi^{a_i,b_i}_\lambda(\textbf{\textup{u}}_{\Delta_\lambda}) = b_i - a_i
$$ 
for any $(a_i, b_i) \in \mcal{I}_f$. 
We shall use the induction on $i$ to verify~\eqref{equ_conversegoal}.

Before starting the induction, we observe the following. For any point $\textbf{\textup{u}}$ in the face $f$, we have
\begin{equation}\label{equAAA}
\Phi^{r_{i-1} + 1,a + b - r_{i-1} - 1}_\lambda (\textbf{\textup{u}}) = \cdots = \Phi^{r_{i}, a + b - r_{i}}_\lambda (\textbf{\textup{u}}).
\end{equation} 
If $\left\{ {(j, a + b - j)} ~|~ j =r_{i-1} + 1, \cdots, r_i \right\}$ is contained in an \emph{unbounded} region, our choice $\lambda$ in~\eqref{equation_lambda_monotone} determines $\Phi^{r_{i-1} + 1,a + b - r_{i-1} - 1}_\lambda (\textbf{\textup{u}}) = \cdots = \Phi^{r_{i}, a + b - r_{i}}_\lambda (\textbf{\textup{u}}) = a + b - r_{i-1} - r_i - 1$ because the indices lie in a symmetric region with respect to the line with slope one through $a + b - r_{i-1} - r_i - 1$. In particular,
\begin{align}\label{equ_baseunboundedcase}
\begin{split}
\Psi^{r_i, a+ b - r_i}_\lambda  (\textbf{\textup{u}}) - \Psi^{r_{i-1}, a+ b - r_{i-1}}_\lambda (\textbf{\textup{u}}) &= (r_i - r_{i-1}) \cdot (a + b - r_{i-1} - r_i - 1) \\
&= \sum_{j = r_{i-1} + 1}^{r_i} (a + b - 2j) = \Psi^{r_i, a+b-r_i}_\lambda (\textbf{\textup{u}}_{\Delta_\lambda}) - \Psi^{r_{i-1}, a+b-r_{i-1}}_\lambda (\textbf{\textup{u}}_{\Delta_\lambda}).
\end{split}
\end{align}

We turn to the base case of the induction. With respect to the order defined in~\eqref{equation_labeling}, $(a_1, b_1)$ is the first index contained in the bounded region and hence
$$
 \Phi^{a_1,b_1}_\lambda (\textbf{\textup{u}})  + \Psi^{a_1 - 1,b_1 + 1}_\lambda (\textbf{\textup{u}}) = \Psi^{a_1,b_1}_\lambda (\textbf{\textup{u}}) =
\Psi^{a_1,b_1}_\lambda(\textbf{\textup{u}}_{\Delta_\lambda})  = \Phi^{a_1,b_1}_\lambda(\textbf{\textup{u}}_{\Delta_\lambda}) + \Psi^{a_1-1,b_1+1}_\lambda(\textbf{\textup{u}}_{\Delta_\lambda}).
$$
The previous identity~\eqref{equ_baseunboundedcase} yields that $\Psi^{a_1 - 1,b_1 + 1}_\lambda (\textbf{\textup{u}}) = \Psi^{a_1-1,b_1+1}_\lambda(\textbf{\textup{u}}_{\Delta_\lambda})$ so that 
$$
	\Phi^{a_1,b_1}_\lambda (\textbf{\textup{u}}) = \Phi^{a_1,b_1}_\lambda(\textbf{\textup{u}}_{\Delta_\lambda})  = b_1 - a_1.
$$

We now proceed to the induction step. 
For any $(a := a_\ell, b := b_\ell) \in \mcal{I}_f$ with $\ell > 1$ and any point $\textbf{\textup{u}}$ in the face $f$,
\begin{equation}\label{equAAA}
\Phi^{r_{i-1} + 1,a + b - r_{i-1} - 1}_\lambda (\textbf{\textup{u}}) = \cdots = \Phi^{r_{i}, a + b - r_{i}}_\lambda (\textbf{\textup{u}}).
\end{equation}
Set $A$ to be the value of~\eqref{equAAA}.
If $\left\{ {(j, a + b - j)} ~|~ j =r_{i-1} + 1, \cdots, r_i \right\}$ is contained in an \emph{unbounded} region, it follows from the previous paragraph that $A = a + b - r_{i-1} - r_i - 1$.
If $\left\{ {(j, a + b - j)} ~|~ j =r_{i-1} + 1, \cdots, r_i \right\}$ is contained in a \emph{bounded} region, then there must be a vertex $(a_k, b_k) \in \mcal{I}_f$ (in a \emph{previous induction} step, that is, $k < \ell$) such that its upper rightmost vertex is $(a_k, b_k)$. By symmetry of the region, we have 
$$
a + b - r_{i-1} - r_i - 1 = b_k - a_k = \Phi_\lambda^{a_k, b_k} (\textbf{\textup{u}}_{\Delta_\lambda})
$$

Moreover, because of~\eqref{equ_psiabri2}, 
\begin{equation}\label{equpsiaaa}
\Psi^{r_i, a+b-r_i}_\lambda (\textbf{\textup{u}}_{\Delta_\lambda}) - \Psi^{r_{i-1}, a+b-r_{i-1}}_\lambda (\textbf{\textup{u}}_{\Delta_\lambda}) = (r_i - r_{i-1}) (a + b - r_{i-1} - r_i - 1) = (r_i - r_{i-1}) \cdot \Phi_\lambda^{a_k, b_k}(\textbf{\textup{u}}_{\Delta_\lambda})
\end{equation}
By the induction hypothesis at $(a_k, b_k)$, 
\begin{equation}\label{equ_inductionhypo}
\Phi_\lambda^{a_k, b_k}(\textbf{\textup{u}}_{\Delta_\lambda}) = \Phi_\lambda^{a_k, b_k}(\textbf{\textup{u}})
\end{equation}
Combining~\eqref{equ_traces},~\eqref{equAAA},~\eqref{equpsiaaa}, and~\eqref{equ_inductionhypo}, we obtain
\begin{align*}
(r_i - r_{i-1}) \cdot A &= \Psi^{r_i, a+b-r_i}_\lambda (\textbf{\textup{u}}) - \Psi^{r_{i-1}, a+b-r_{i-1}}_\lambda (\textbf{\textup{u}}) \\
&= \Psi^{r_i, a+b-r_i}_\lambda (\textbf{\textup{u}}_{\Delta_\lambda}) - \Psi^{r_{i-1}, a+b-r_{i-1}}_\lambda (\textbf{\textup{u}}_{\Delta_\lambda}) \\
&= (r_i - r_{i-1}) \cdot \Phi_\lambda^{a_k, b_k}(\textbf{\textup{u}}) = (r_i - r_{i-1}) \cdot (b_k - a_k)
\end{align*}
and therefore $A = b_k - a_k$ as desired. 
Furthermore,~\eqref{equ_psiabri} and~\eqref{equpsiaaa} yield that
\begin{equation}\label{equinductionpreab}
\Psi^{r_i, a+b-r_i}_\lambda (\textbf{\textup{u}}_{\Delta_\lambda}) - \Psi^{r_{i-1}, a+b-r_{i-1}}_\lambda (\textbf{\textup{u}}_{\Delta_\lambda}) = \Psi^{r_i, a+b-r_i}_\lambda (\textbf{\textup{u}}) - \Psi^{r_{i-1}, a+b-r_{i-1}}_\lambda (\textbf{\textup{u}})
\end{equation}
for any $i$.
By~\eqref{equ_traces}, we obtain
$$
\Phi^{a_\ell,b_\ell}_\lambda (\textbf{\textup{u}}_{\Delta_\lambda}) + \Psi^{a_\ell - 1,b_\ell + 1}_\lambda (\textbf{\textup{u}}_{\Delta_\lambda})  = \Psi^{a_\ell,b_\ell}_\lambda (\textbf{\textup{u}}_{\Delta_\lambda}) = \Psi^{a_\ell,b_\ell}_\lambda (\textbf{\textup{u}}) = \Phi^{a_\ell,b_\ell}_\lambda (\textbf{\textup{u}}) + \Psi^{a_\ell -1,b_\ell+1}_\lambda (\textbf{\textup{u}}).
$$
\eqref{equ_conversegoal} follows from~\eqref{equinductionpreab}.
\end{proof}

We are going to use circle actions generated by partial traces~\eqref{equ_partiaalttrr} to obtain gradient holomorphic discs and then apply Corollary~\ref{corollary_Maslov_index_formula} to compute their Maslov indices. 
The action is \emph{not} globally defined because a partial trace is \emph{not} a global smooth function in general. So, we need to confirm that a constructed disc generated by the partial trace is contained in its smooth locus. Moreover, to apply Corollary~\ref{corollary_Maslov_index_formula}, we will check the semifreeness of the action. 

Let $f$ be a face of $\Delta_\lambda$ and $\gamma_f$ be the corresponding subgraph of $\Gamma_\lambda$. 
For each index $(a_i, b_i) \in \mcal{I}_f$, where $\mcal{I}_f$ is given in \eqref{equ_doubleindices}, 
consider a symplectic manifold $(\mcal{U}_{\lambda,-}^{a_i, b_i}, \omega_\lambda)$ with periodic Hamiltonian given by the partial trace in Lemma~\ref{lemma_Ham_action}$\colon$ 
\begin{equation}\label{equ_pselambda}
	\Psi_\lambda^{a_i, b_i} = \Phi_\lambda^{1,a_i+b_i - 1} + \cdots + \Phi_\lambda^{a_i, b_i}.
\end{equation}
The following lemma tells us that the fiber over any point $\textbf{u}$ in the relative interior of $f$ is contained in the smooth locus of $\Psi_\lambda^{a_i, b_i}$. 

\begin{lemma}\label{lemma_smooth_on_Lag}
	The inverse image $\Phi_\lambda^{-1}(\mathring{f})$ is contained in $\mcal{U}_{\lambda,-}^{a_i, b_i}$. 
\end{lemma}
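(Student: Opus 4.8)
The plan is to reduce the statement to a purely combinatorial claim about the polytope $\Delta_\lambda$ and the subgraph $\gamma_f$. Since $\Phi^{a_i,b_i}_\lambda$ and $\Phi^{a_i+1,b_i-1}_\lambda$ both factor through the moment map $\Phi_\lambda$, and since $\mcal{U}^{a_i,b_i}_{\lambda,-}=\{\Phi^{a_i+1,b_i-1}_\lambda<\Phi^{a_i,b_i}_\lambda\}$, it suffices to prove that every $\textbf{\textup{u}}=(u_{p,q})$ in the relative interior $\mathring f$ satisfies the strict inequality $u_{a_i+1,b_i-1}<u_{a_i,b_i}$. (When $b_i=1$ the set $\mcal{U}^{a_i,1}_{\lambda,-}$ is all of $\mcal{O}_\lambda$, because $\Psi^{a_i,1}_\lambda$ is then the trace of a leading principal submatrix; so that case is trivial and I assume $b_i\ge 2$.) The min--max principle always gives the interlacing chain $u_{a_i,b_i}\ge u_{a_i,b_i-1}\ge u_{a_i+1,b_i-1}$ — these are exactly the two diagonal inequalities of the pattern \eqref{equation_GC-pattern} relating the box $\square^{(a_i,b_i-1)}$ to its upper neighbor $\square^{(a_i,b_i)}$ and to its right neighbor $\square^{(a_i+1,b_i-1)}$ — so the whole point is to show that at least one of these two inequalities is strict on $\mathring f$.

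The geometric input that forces this is the presence of a suitable wall of $\gamma_f$. Since $(a_i,b_i)$ is the upper rightmost vertex of a bounded region $R$ of $\gamma_f$, it maximizes $x+y$ over the vertices of $R$; hence among the four unit boxes incident to $(a_i,b_i)$ only $\square^{(a_i,b_i)}$ can lie in $R$, so both edges of $\square^{(a_i,b_i)}$ through $(a_i,b_i)$ — in particular the vertical segment $\overline{(a_i,b_i-1)(a_i,b_i)}$ — lie on $\partial R\subseteq\gamma_f$. Thus $(a_i,b_i-1)\in\gamma_f$, and it is not the origin (as $a_i\ge 1$). Writing $\gamma_f$ as a union of positive paths, some positive path runs through $(a_i,b_i-1)$ and enters it along one of its two edges pointing toward the origin, namely $\overline{(a_i-1,b_i-1)(a_i,b_i-1)}$ or $\overline{(a_i,b_i-2)(a_i,b_i-1)}$; hence at least one of these two segments belongs to $\gamma_f$.

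To conclude, I would invoke the explicit description of the bijection $\Psi$ from Theorem~\ref{theorem_ACK}: the two GC components attached to an adjacent pair of boxes agree identically on the face $f$ precisely when the segment separating those boxes is \emph{absent} from $\gamma_f$; consequently, if that segment lies in $\gamma_f$, the corresponding min--max inequality is not tight on $f$ and is therefore strict on $\mathring f$. Now $\overline{(a_i-1,b_i-1)(a_i,b_i-1)}$ separates $\square^{(a_i,b_i)}$ from $\square^{(a_i,b_i-1)}$, and $\overline{(a_i,b_i-2)(a_i,b_i-1)}$ separates $\square^{(a_i,b_i-1)}$ from $\square^{(a_i+1,b_i-1)}$. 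So the first case yields $u_{a_i,b_i}>u_{a_i,b_i-1}$ on $\mathring f$ and the second yields $u_{a_i,b_i-1}>u_{a_i+1,b_i-1}$ on $\mathring f$; in either case, combined with the remaining weak inequality from the interlacing chain, we get $u_{a_i,b_i}>u_{a_i+1,b_i-1}$ on $\mathring f$, which is exactly $\Phi_\lambda^{-1}(\mathring f)\subseteq\mcal{U}^{a_i,b_i}_{\lambda,-}$.

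The step I expect to be most delicate is the bookkeeping that matches an edge of $\gamma_f$ at the vertex $(a_i,b_i-1)$ with the correct equality between pattern entries: one has to pin down the orientation conventions of \eqref{equation_GC-pattern} and of the $\Psi$-description simultaneously, and check which of the two diagonal inequalities at $\square^{(a_i,b_i-1)}$ corresponds to the horizontal versus the vertical edge. The other point worth spelling out carefully is the elementary fact that a vertex of $\gamma_f$ other than the origin carries a $\gamma_f$-edge pointing toward the origin — this is where the hypothesis that $\gamma_f$ is a union of positive (monotone) paths is really used, and it is what upgrades ``$(a_i,b_i-1)\in\gamma_f$'' into the two-way alternative that drives the proof.
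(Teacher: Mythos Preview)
Your argument is correct, but it is more elaborate than necessary, and the paper's proof takes a shorter route using the very edge you already identified.

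You establish that the vertical segment $\overline{(a_i,b_i-1)(a_i,b_i)}$ lies in $\gamma_f$, and then use this only to conclude that the vertex $(a_i,b_i-1)$ belongs to $\gamma_f$; from there you invoke the positive-path structure to find a \emph{second} edge through $(a_i,b_i-1)$, and run a two-case analysis along the interlacing chain $u_{a_i,b_i}\ge u_{a_i,b_i-1}\ge u_{a_i+1,b_i-1}$. The paper instead exploits that same vertical segment directly: under the description of $\Psi$, the presence of $\overline{(a_i,b_i-1)(a_i,b_i)}$ in $\gamma_f$ already means that the equality $u_{a_i,b_i}=u_{a_i+1,b_i}$ is not among the defining relations of $f$, hence $u_{a_i,b_i}>u_{a_i+1,b_i}$ on $\mathring f$. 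One weak min--max inequality $u_{a_i+1,b_i}\ge u_{a_i+1,b_i-1}$ then finishes the job. In other words, the paper traverses the other side of the square of boxes at $(a_i,b_i)$ and needs no second edge and no case split.

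What your approach buys is a self-contained check of the edge/inequality dictionary (the ``delicate bookkeeping'' you flag), and your handling of the degenerate case $b_i=1$ is explicit where the paper leaves it implicit. But the positive-path detour is avoidable: once you know $\overline{(a_i,b_i-1)(a_i,b_i)}\in\gamma_f$, you are done.
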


\begin{proof}
	By our choice of the upper rightmost vertex $(a_i,b_i) \in \mcal{I}_f$ in~\eqref{equ_doubleindices}, the edge connecting $(a_i, b_i)$ and $(a_i, b_i - 1)$ is contained in the face $\gamma_f$, see Figure~\ref{figure_aibiineq}.
	Recalling how $\Psi$ in~\eqref{equ_orderpreservingmap} is defined, 
	the presence of the edge implies that any point $\textbf{\textup{u}}$ in the relative interior of $f$ satisfies 
	$u_{a_i, b_i} > u_{a_i + 1, b_i}$. By the GC pattern~\eqref{equation_GC-pattern}, it follows that $u_{a_i + 1, b_i} \geq u_{a_i + 1, b_i - 1}$. For any $\textbf{\textup{u}} \in \mathring{f}$, we have $u_{a_i, b_i} > u_{a_i + 1, b_i - 1}$ and hence $\Phi_\lambda^{-1}(\textbf{\textup{u}})$ is contained in $\mcal{U}_{\lambda,-}^{a_i,b_i}$. 	
\end{proof}
	\begin{figure}[h]
		\vspace{-0.2cm}
		\scalebox{0.85}{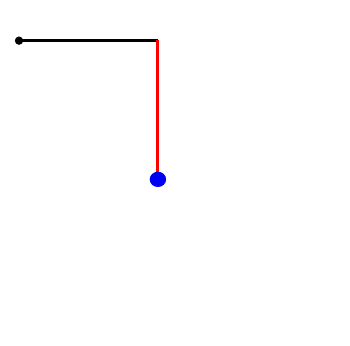}
		\vspace{-0.8cm}
		\caption{\label{figure_aibiineq} The pattern around the vertex $(a_i,b_i)$}
	\end{figure}

A free orbit generated by $\Psi_\lambda^{a_i, b_i}$ and contained in $\Phi_\lambda^{-1}(\textbf{\textup{u}})$ for any $\textbf{\textup{u}} \in \mathring{f}$ can be always taken due to the following. 

\begin{lemma}\label{lemma_free_orbit}\label{lemma_exist_freeorbit}
	Consider an effective symplectic $S^1$-action on a $2n$-dimensional, possibly open,
	 symplectic manifold $(M,\omega)$ and suppose that $L \subset M$ is an $S^1$-invariant closed Lagrangian submanifold of $(M,\omega)$.
	Then there exists at least one free $S^1$-orbit in $L$. 
\end{lemma}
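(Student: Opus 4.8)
The plan is to prove that the principal isotropy subgroup of the $S^1$-action on $L$ is trivial; a free orbit will then be furnished by any point in the principal stratum. Since $S^1$ is connected it preserves each connected component of $L$ and of $M$, so I would first replace $L$ by one of its components and $M$ by the component of $M$ containing it; after this reduction $L$ is a connected closed $S^1$-invariant Lagrangian, $M$ is connected, and we may assume the $S^1$-action on $M$ is still effective (this is automatic in all of our applications, where the ambient manifold is connected).

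Next I would invoke the principal orbit theorem for the smooth action of the compact group $S^1$ on the connected manifold $L$: there is a closed subgroup $\Gamma\subseteq S^1$ --- the principal isotropy group, which equals the stabilizer of a point because $S^1$ is abelian --- such that the set $L_{\mathrm{pr}}$ of points of $L$ with stabilizer exactly $\Gamma$ is open and dense in $L$. If $\Gamma=\{e\}$, then every point of $L_{\mathrm{pr}}$ lies on a free $S^1$-orbit contained in $L$, and we are done. So assume toward a contradiction that $\Gamma\neq\{e\}$; then $\Gamma$ is either a nontrivial finite cyclic group or all of $S^1$. By construction $\Gamma$ fixes every point of $L_{\mathrm{pr}}$, hence --- since the fixed locus $M^\Gamma$ is closed and $L_{\mathrm{pr}}$ is dense in $L$ --- it fixes all of $L$, i.e. $L\subseteq M^\Gamma$.

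To reach the contradiction I would use a dimension count. Averaging an $\omega$-compatible almost complex structure over the compact group $\Gamma$ yields a $\Gamma$-invariant one $J$, with respect to which $M^\Gamma$ is a complex, hence symplectic, submanifold of $(M,\omega)$: at $x\in M^\Gamma$ the subspace $T_xM^\Gamma=(T_xM)^\Gamma$ is $J$-invariant, so $\omega_x$ restricts nondegenerately to it. Now $L\subseteq M^\Gamma$ is isotropic of dimension $n=\tfrac{1}{2}\dim M$, so at any $x\in L$ the subspace $T_xL\subseteq T_xM^\Gamma$ is an $n$-dimensional isotropic subspace of the symplectic vector space $T_xM^\Gamma$; this forces $\dim M^\Gamma\ge 2n=\dim M$. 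Hence $M^\Gamma$ is open in $M$, and being also closed and nonempty in the connected manifold $M$ it must be all of $M$, that is, $\Gamma$ acts trivially on $M$, contradicting effectiveness since $\Gamma\neq\{e\}$. Therefore $\Gamma=\{e\}$ and $L$ contains a free $S^1$-orbit.

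The step I expect to be most delicate is the passage "$\Gamma$ fixes a dense subset of $L$, hence all of $L$", which is where the structure theory of compact group actions enters; the principal orbit theorem gives it cleanly, but if one prefers to avoid it one can instead write the compact manifold $L$ as the countable union of the closed subsets $L\cap M^{\mathbb{Z}/p}$ (over primes $p$) and $L\cap M^{S^1}$, apply the Baire category theorem to find one with nonempty interior in $L$, and run the same dimension count from there. Everything else is standard once the fact that fixed loci of compact groups of symplectomorphisms are symplectic submanifolds is recorded.
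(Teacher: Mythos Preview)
Your argument is correct and rests on the same key observation as the paper's: for any nontrivial closed subgroup $\Gamma\subseteq S^1$, the fixed locus $M^\Gamma$ is a symplectic submanifold, and an $n$-dimensional isotropic $L$ cannot sit inside a symplectic submanifold of dimension $<2n$. The organization differs. The paper uses that a compact $L$ meets only finitely many orbit types $\Z_{p_1},\dots,\Z_{p_r}$ and argues directly that each $Z_i:=M^{\Z_{p_i}}$ has $\dim Z_i<2n$, so $Z_i\cap L$ has positive codimension in $L$ and $L\setminus\bigcup_i Z_i\neq\emptyset$. You instead invoke the principal orbit theorem to produce a single subgroup $\Gamma$ fixing all of $L$ and derive a contradiction from $\dim M^\Gamma\ge 2n$. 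Your route is slightly more structural (and your Baire alternative is essentially the paper's argument with countably many primes in place of finitely many orbit types), while the paper's is a touch more elementary; both hinge on the same symplectic dimension count. Your caveat about passing to a connected component of $M$ is well noted --- the paper tacitly makes the same assumption when asserting $\dim Z_i<2n$.
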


\begin{proof}
	Since $L$ is closed, there are only finitely many orbit types, namely types of $\Z_{p_1}, \cdots,$ and $\Z_{p_r}$ for some positive integers $p_1, \cdots, p_r$ greater than one. 
	Also, since the action is effective and symplectic, each $\Z_{p_i}$-fixed point set denoted by $Z_i$ is a symplectic submanifold of $(M,\omega)$ and has dimension less than $2n$. 
	Thus $Z_i \cap L$ has positive codimension in $L$. Therefore, we have 
	$
		L - \bigcup_{i=1}^r Z_i \neq \emptyset
	$
	and this finishes the proof. 
\end{proof}

Now, we figure out the maximal fixed component of the circle action generated by $\Psi_\lambda^{a_i,b_i}$ and compute its dimension for the purpose of applying Corollary~\ref{corollary_Maslov_index_formula}. 

\begin{lemma}\label{lemma_codim}
	Let $\lambda = \{\lambda_1, \cdots, \lambda_n\}$ be given in \eqref{equation_lambda_monotone}.
	For any vertex $(a,b)$ in $\Gamma_\lambda$, consider the circle action on $\mcal{U}_{\lambda, -}^{a,b}$ generated by $\Psi^{a,b}_\lambda$ in~\eqref{equ_psiijpartial}.
	Then there exists a maximal fixed component, denoted by $Z_\lambda^{a, b}$, of the action contained in $\mcal{U}_{\lambda,-}^{a,b}$.
	Moreover, $Z_\lambda^{a,b}$ is connected and 
	\begin{equation}\label{equ_codimensionformu}
		\mathrm{codim}_\R Z_\lambda^{a, b} = 2 \left( \sum_{i=1}^a \lambda_i - \Psi^{a,b}_\lambda(\textbf{\textup{u}}_{\Delta_\lambda}) \right)
	\end{equation}
	where $\textbf{\textup{u}}_{\Delta_\lambda}$ is the center of ${\Delta_\lambda}$. 
\end{lemma}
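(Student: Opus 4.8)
The plan is as follows. Write $m := a+b-1$, so that for $A \in \mcal{O}_\lambda$ the value $\Psi^{a,b}_\lambda(A)$ is the sum of the $a$ largest eigenvalues of the leading principal $m\times m$ submatrix $A^{(m)}$. First I would locate the maximum of $\Psi^{a,b}_\lambda$. Reading the chains of inequalities in the GC pattern~\eqref{equation_GC-pattern} upward along each ``row'', for $1 \le i \le a$ one has
\[
	\Phi^{i,m+1-i}_\lambda(A) = u_{i,m+1-i} \le u_{i,m+2-i} \le \cdots \le u_{i,n-i} \le u_{i,n+1-i} = \lambda_i ,
\]
whence $\Psi^{a,b}_\lambda = \sum_{i=1}^{a} \Phi^{i,m+1-i}_\lambda \le \sum_{i=1}^{a}\lambda_i =: S$ on all of $\mcal{O}_\lambda$, with equality at $A$ exactly when $u_{i,j}(A) = \lambda_i$ for every $1\le i \le a$ and $m+1-i \le j \le n+1-i$. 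I would then set
\[
	Z^{a,b}_\lambda := \bigl\{\, A \in \mcal{U}^{a,b}_{\lambda,-} ~:~ \Psi^{a,b}_\lambda(A) = S \,\bigr\},
\]
the locus on which the periodic Hamiltonian $\Psi^{a,b}_\lambda$ attains its maximum over $\mcal{U}^{a,b}_{\lambda,-}$. Since $\Psi^{a,b}_\lambda$ is smooth on $\mcal{U}^{a,b}_{\lambda,-}$ and generates a Hamiltonian circle action there by Lemma~\ref{lemma_Ham_action}, the fixed point set of this action is a disjoint union of closed symplectic submanifolds equal to the critical set of $\Psi^{a,b}_\lambda$; hence $Z^{a,b}_\lambda$, being the set of global maxima, is a union of connected components of the fixed set, and it will be \emph{the} maximal fixed component once it is shown to be nonempty and connected.

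The main input is a translation into flags. Identify $\mcal{O}_\lambda$ with $\F(n_1,\dots,n_r;n)$ via $A \mapsto V_\bullet(A)$, the flag of partial sums of eigenspaces of $A$, and let $V_t(A)$ denote the $n_t$-dimensional step. Let $s$ be determined by $n_{s-1} < a \le n_s$, so that $\lambda_a$ is the common eigenvalue along the $s$-th block. Unwinding the equality condition of the previous paragraph, $\Psi^{a,b}_\lambda(A) = S$ holds if and only if some $a$-dimensional $A$-invariant subspace on which $A$ has eigenvalues $\lambda_1,\dots,\lambda_a$ is contained in $\C^m := \langle e_1,\dots,e_m\rangle$; equivalently,
\[
	V_{s-1}(A) \subseteq \C^m \qquad \text{and} \qquad \dim_{\C}\!\bigl(V_s(A)\cap \C^m\bigr) \ge a .
\]
Intersecting with $\mcal{U}^{a,b}_{\lambda,-}$ — that is, requiring the $a$-th and $(a+1)$-th eigenvalues of $A^{(m)}$ to be distinct — deletes exactly the sublocus where $\dim_{\C}(V_s(A)\cap\C^m) \ge a+1$, which can occur only when $\lambda_a = \lambda_{a+1}$. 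Therefore
\[
	Z^{a,b}_\lambda = \bigl\{\, V_\bullet \in \F(n_1,\dots,n_r;n) ~:~ V_{s-1}\subseteq \C^m,\ \dim_{\C}(V_s\cap\C^m) = a \,\bigr\}.
\]
When $a = n_s$ this is the closed sub-flag-bundle $\{V_\bullet : V_s \subseteq \C^m\}$ over $\mathrm{Gr}(a,m)$; in general it is a flag bundle over the irreducible, locally closed exact-rank stratum inside the two-step flag manifold $\F(n_{s-1},n_s;n)$, hence in any case a connected, smooth (locally closed) symplectic submanifold of $\mcal{U}^{a,b}_{\lambda,-}$. Nonemptiness amounts to the attainability of the intersection dimension $a$, which follows from the inequalities $n_{s-1} < a \le n_s$ and $b \le n-n_s$; the last one holds because $(a,b)$ is a vertex of $\Gamma_\lambda$. (Connectedness can alternatively be read off from the $(S^1)^k \times Y_f$ description of GC fibers in Theorem~\ref{theorem_GC_generalfacts}(4).)

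It remains to compute the codimension, which I would do by a dimension count in the flag model: replacing the unconstrained step $V_{s-1}\in \mathrm{Gr}(n_{s-1},n)$ by $V_{s-1}\subseteq\C^m$ lowers the dimension by $n_{s-1}(n-m)$, imposing $\dim(V_s\cap\C^m) = a$ lowers it by a further $(a-n_{s-1})\bigl((n-m)-(n_s-a)\bigr)$ — the codimension of the relevant Schubert variety in $\mathrm{Gr}(n_s-n_{s-1},\,n-n_{s-1})$, the exact-rank locus being open dense in it — and the remaining flag data does not change dimension; hence
\[
	\mathrm{codim}_{\C} Z^{a,b}_\lambda = a(n-m) - (a-n_{s-1})(n_s-a) .
\]
On the other hand $\Psi^{a,b}_\lambda(\textbf{\textup{u}}_{\Delta_\lambda}) = \sum_{i=1}^{a}\bigl((m+1-i)-i\bigr) = a(m-a) = a(b-1)$ by~\eqref{equ_center_poly}, and feeding in $\lambda_i = n-n_{s-1}-n_s$ for $n_{s-1}<i\le n_s$ together with the telescoping identity $\sum_{t'=1}^{t}(n_{t'}-n_{t'-1})(n-n_{t'-1}-n_{t'}) = n_t(n-n_t)$ gives $\sum_{i=1}^{a}\lambda_i - a(b-1) = a(n-m)-(a-n_{s-1})(n_s-a)$, which is exactly $\mathrm{codim}_{\C}Z^{a,b}_\lambda$; doubling yields~\eqref{equ_codimensionformu}. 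The delicate point — the one I expect to be the main obstacle — is the repeated-eigenvalue case $\lambda_a = \lambda_{a+1}$, i.e.\ when $a$ is not a jump index $n_s$: there $\Psi^{a,b}_\lambda$ attains its global maximum $S$ on a \emph{closed} subvariety of $\mcal{O}_\lambda$ strictly larger than $Z^{a,b}_\lambda$, so that $Z^{a,b}_\lambda$ is genuinely only locally closed and is \emph{not} the $\Phi_\lambda$-preimage of a face of $\Delta_\lambda$; intersecting with $\mcal{U}^{a,b}_{\lambda,-}$ is precisely what cuts the maximum locus down to the exact-rank stratum, and verifying that this stratum is nonempty, connected, and a smooth symplectic submanifold — and getting its Schubert codimension right — is where the real work lies.
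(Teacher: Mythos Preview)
Your approach is correct and genuinely different from the paper's. The paper stays inside the GC combinatorics: it identifies the maximum locus as $\mcal{U}^{a,b}_{\lambda,-}\cap\Phi_\lambda^{-1}(F)$ for an explicit face $F$ of $\Delta_\lambda$, reads off $\dim F$ by observing that the filling of $\gamma_F$ consists only of $L_1$-blocks (so the generic fiber is a torus), and then matches $\dim\Delta_\lambda-\dim F$ with $\sum_{i=1}^a\lambda_i-\Psi^{a,b}_\lambda(\textbf{\textup{u}}_{\Delta_\lambda})$ by counting unit boxes in the unbounded regions of the ladder diagram. You instead pass to the flag model via (an implicit use of) Ky Fan's trace inequality, recognise $Z^{a,b}_\lambda$ as a Schubert-type stratum, and compute its codimension directly by a Grassmannian dimension count; your telescoping identity $\sum_{t'\le t}(n_{t'}-n_{t'-1})(n-n_{t'-1}-n_{t'})=n_t(n-n_t)$ is the algebraic shadow of the paper's box count. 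Your route bypasses the $L$-block machinery entirely and makes the repeated-eigenvalue case $\lambda_a=\lambda_{a+1}$ more transparent---the exact-rank condition $\dim(V_s\cap\C^m)=a$ is precisely what intersecting with $\mcal{U}^{a,b}_{\lambda,-}$ enforces. The paper's route, on the other hand, stays self-contained within the framework already developed and gets connectedness for free from the general Morse--Bott theory of periodic Hamiltonians (your appeal to irreducibility of the two-step Schubert stratum works but would need a line of justification, e.g.\ homogeneity under the parabolic stabilising $\C^m$).
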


\begin{proof}
	By the GC pattern~\eqref{equation_GCdef}, the maximal value of the component $\Phi_\lambda^{i,j}$ is $\lambda_i$ so that the partial trace $\Psi_\lambda^{a, b}$ is bounded by
	$
		\lambda_1 + \cdots + \lambda_{a}
	$ on $\mcal{O}_\lambda$.
	Setting $\widetilde{n}_{k+1} := \min(a, n_{k+1})$,
	consider the set $F$ given by
	\begin{equation}\label{equ_faceFF}
		F = \bigcap_{k=0}^s  \bigcap_{j = a+b - \widetilde{n}_{k+1}}^{n - {n}_{k+1}} \left[ \bigcap_{i = n_k+1}^{\widetilde{n}_{k+1} - 1} 
		 \left\{ \textbf{\textup{u}} \in \R^{\dim \Delta_\lambda} ~\big{|}~ u_{i,j} = u_{i+1, j} \right\} \cap \bigcap_{i = n_k+1}^{\widetilde{n}_{k+1}} 
		 \left\{ \textbf{\textup{u}} \in \R^{\dim \Delta_\lambda} ~\big{|}~ u_{i,j} = u_{i, j+1} \right\} \right]	
	\end{equation}
	where $s$ is the integer satisfying $n_{s} < a \leq n_{s+1}$. (Keep the second picture of Figure~\ref{figure_max} in mind as an example).
	The set $F$ corresponds to a face in $\Gamma_\lambda$, which cut $\Gamma_\lambda$ into $(s+1)$ unbounded regions and several unit blocks$\colon$ 
	\begin{itemize}
		\item The unbounded regions are bounded by 
		\begin{enumerate}
		\item	
		 line segments $\overline{(n_0, a+b-1 - \widetilde{n}_{1}),(\widetilde{n}_{1}, a+b-1 - \widetilde{n}_{1})}$ and 
		$\overline{(\widetilde{n}_{1}, a+b-1 - \widetilde{n}_{1}),(\widetilde{n}_{1}, n - {n}_{1})}$,
		or 
		\item line segments
			$
		\overline{(n_k, a+b-1 - \widetilde{n}_{k+1}),(n_k, n - n_k)}$, 
		$\overline{(n_k, a+b-1 - \widetilde{n}_{k+1}),(\widetilde{n}_{k+1}, a+b-1 - \widetilde{n}_{k+1})}$, and
		$\overline{(\widetilde{n}_{k+1}, a+b-1 - \widetilde{n}_{k+1}),(\widetilde{n}_{k+1}, n - {n}_{k+1})}
	$ for 	$1 \leq k \leq s$.
		\end{enumerate}
		\item The other regions are unit blocks.
	\end{itemize}
	Theorem~\ref{theorem_ACK} asserts that $F$ is indeed a face of $\Delta_\lambda$.
	Again by~\eqref{equation_GCdef}, $F$ can be simply described as the intersection
	\begin{equation}\label{equ_facess}
		F = \bigcap_{i=1}^a \left\{ \textbf{\textup{u}} \in \Delta_\lambda ~\big{|}~ u_{i, a + b - i} = \lambda_i \right\}.
	\end{equation}
	
	For any $\textbf{u} \in F$, the function value $\Psi_\lambda^{a,b} (\textbf{u})$ is exactly $\lambda_1 + \cdots + \lambda_a$ so that the maximum of $\Psi_\lambda^{a,b}$ is achieved exactly on $F$. 
	Moreover, because $u_{a, b} > u_{a + 1, b - 1}$ for every point $\textbf{\textup{u}} = (u_{i,j})$ in the relative interior $\mathring{F}$ of $F$, $\Phi_\lambda^{-1}(\mathring{F})$ is contained in $\mcal{U}_{\lambda, -}^{a,b}$. In particular, the intersection $\mcal{U}_{\lambda, -}^{a,b} \cap \Phi_\lambda^{-1}(F)$ is \emph{non-empty}. In sum, we have a non-empty maximal component $ Z_\lambda^{a,b}$
	\begin{equation}\label{equ_maxiamalcom}
		Z_\lambda^{a,b} = \mcal{U}_{\lambda, -}^{a,b} \cap \Phi_\lambda^{-1}(F).
	\end{equation}
	The connectedness of $Z_\lambda^{a,b}$ follows from \cite[Corollary IV.3.2]{Au}. 
	 	
	Let $\gamma_F$ be the face of $\Gamma_\lambda$ corresponding to $F$ in Theorem \ref{theorem_ACK}, see the second picture of Figure~\ref{figure_max} as an example.
	The filling of the face $\gamma_F$ does \emph{not} contain $L_k$-blocks $(k \geq 2)$.
	We cannot fill any $L$-blocks obeying the condition~\eqref{equation_L_block} into the unbounded regions above the horizontal line segments
	$$
		\left\{ \overline{(n_k, a+b-1 - \widetilde{n}_{k+1}),(\widetilde{n}_{k+1}, a+b-1 - \widetilde{n}_{k+1})} ~|~ 0 \leq k \leq s \right\}.
	$$
	The other regions of $\gamma_f$ can be filled by $L_1$-blocks. 
			
	By Corollary~\ref{prposition_torifactors}, the fiber over any point in $\mathring{F}$ is a $(\dim F)$-dimensional torus. We then obtain 	
	\begin{equation*}
		\mathrm{codim}_\R ~Z_\lambda^{a, b} = 2(\dim~\Delta_\lambda - \dim ~F).
	\end{equation*}
	On the other hand, we have 
	\begin{align*}
		 \sum_{i=1}^a \lambda_i - \Psi^{a,b}_\lambda(\textbf{\textup{u}}_{\Delta_\lambda}) 
		  &=  \displaystyle \sum_{j=1}^{a} \left(\lambda_j - \Phi^{j,a + b - j}_\lambda(\textbf{\textup{u}}_{\Delta_\lambda}) \right) 
			= \displaystyle \sum_{j=1}^{a} \left(\lambda_j - (a + b - j - j) \right),
	\end{align*}
	where the equalities come from ~\eqref{equ_center_poly} and~\eqref{equ_pselambda}. 
	Set $n_0 = 0$.
	From ~\eqref{equation_lambda_monotone}, it follows that
	\begin{align*}
		 \sum_{i=1}^a \lambda_i - \Psi^{a,b}_\lambda(\textbf{\textup{u}}_{\Delta_\lambda}) 
		 &= \sum_{i=0}^s \sum_{j = n_{i} + 1}^{\widetilde{n}_{i+1}} \left( n - n_{i} - n_{i+1} - (a+b) + 2j \right) \\
		 &= \sum_{i=0}^s \left( \left( \widetilde{n}_{i+1} - n_i \right) \cdot \left(n - n_{i} - n_{i+1} - (a+b) \right) +  \sum_{j = n_{i} + 1}^{\widetilde{n}_{i+1}}  2j \right) \\
		 &= \sum_{i=0}^s \left( \left( \widetilde{n}_{i+1} - n_i \right) \cdot \left(n - n_{i} - n_{i+1} - (a+b) + n_i + \widetilde{n}_{i+1} + 1 \right) \right) \\
		 &= \sum_{i=0}^{s-1} \left[ (n_{i+1} - n_i) \cdot \left( \left( n  - n_{i+1} \right) - \left( a+b -n_{i+1} - 1  \right) \right) \right] +  (a - n_{s})\left((n - n_{s+1}) - (b -1) \right)
	\end{align*}
	where $s$ is the integer satisfying $n_{s} < a \leq n_{s+1}$. 
	Notice that the last expression is exactly the number of unit boxes contained in the $(s+1)$ unbounded regions. 
	For instance, $\left[ (n_{1} - n_0) \cdot \left( \left( n  - n_{1} \right) - \left( a+b -n_{1} - 1  \right) \right) \right]$ is the number of unit boxes in the unbounded region bounded by two lines $\overline{(n_0, a+b-1 - n_{1}),({n}_{1}, a+b-1 - {n}_{1})}$ and 
		$\overline{({n}_{1}, a+b-1 - {n}_{1}),({n}_{1}, n - {n}_{1})}$.
	Thus, by Theorem~\ref{theorem_ACK}, we obtain	
	\begin{align*}
			\sum_{i=1}^a \lambda_i - \Psi^{a,b}_\lambda(\textbf{\textup{u}}_{\Delta_\lambda})  & = \dim~\Delta_\lambda - \dim~F.
	\end{align*}
	Hence,~\eqref{equ_codimensionformu} is established.
\end{proof}

\begin{example}\label{example_codim}
	Let $\lambda = \{8,8,3,3,3,-2,-2,-5,-8,-8\}$. 
	We choose a vertex $(4,3)$ of $\Gamma_\lambda$ and consider 
	\[
		\Psi_\lambda^{4,3} = \Phi_\lambda^{1,6} + \Phi_\lambda^{2,5} + \Phi_\lambda^{3,4} + \Phi_\lambda^{4,3}
	\] 
	as an example.
	Any point $z$ in the maximal component $Z^{4,3}_\lambda$ satisfies
	\[
		\Phi_\lambda^{1,6}(z) = \Phi_\lambda^{2,5}(z) = 8, \quad \Phi_\lambda^{3,4}(z) = \Phi_\lambda^{4,3}(z) = 3.
	\]
	Then the image $\Phi_\lambda(Z^{4,3}_\lambda)$ is contained in the face $F$ where the face $\gamma_F$ of $\Gamma_\lambda$ corresponding to $F$ 
	is given in the second diagram of Figure~\ref{figure_max} by the min-max principle. 
	By Theorem~\ref{theorem_ACK} and Corollary~\ref{prposition_torifactors}, we have
	$$
		\mathrm{codim}_\R \Phi^{-1}_\lambda (\mathring{F}) = \mathrm{codim}_\R Z^{3,4}_\lambda = 28.
	$$
	On the other hand, we have 
	\begin{itemize}	
		\item $\sum_{i=1}^4 \lambda_i = 8 + 8 + 3 + 3 = 22$,
		\item $\Psi^{3,4}_\lambda(\textbf{\textup{u}}_{\Delta_\lambda}) = 5 + 3 + 1 + (-1) = 8$.
	\end{itemize}
	Therefore~\eqref{equ_codimensionformu} holds for $(a,b)= (4,3)$.

	\vspace{0.2cm}	
	\begin{figure}[h]
		\scalebox{0.85}{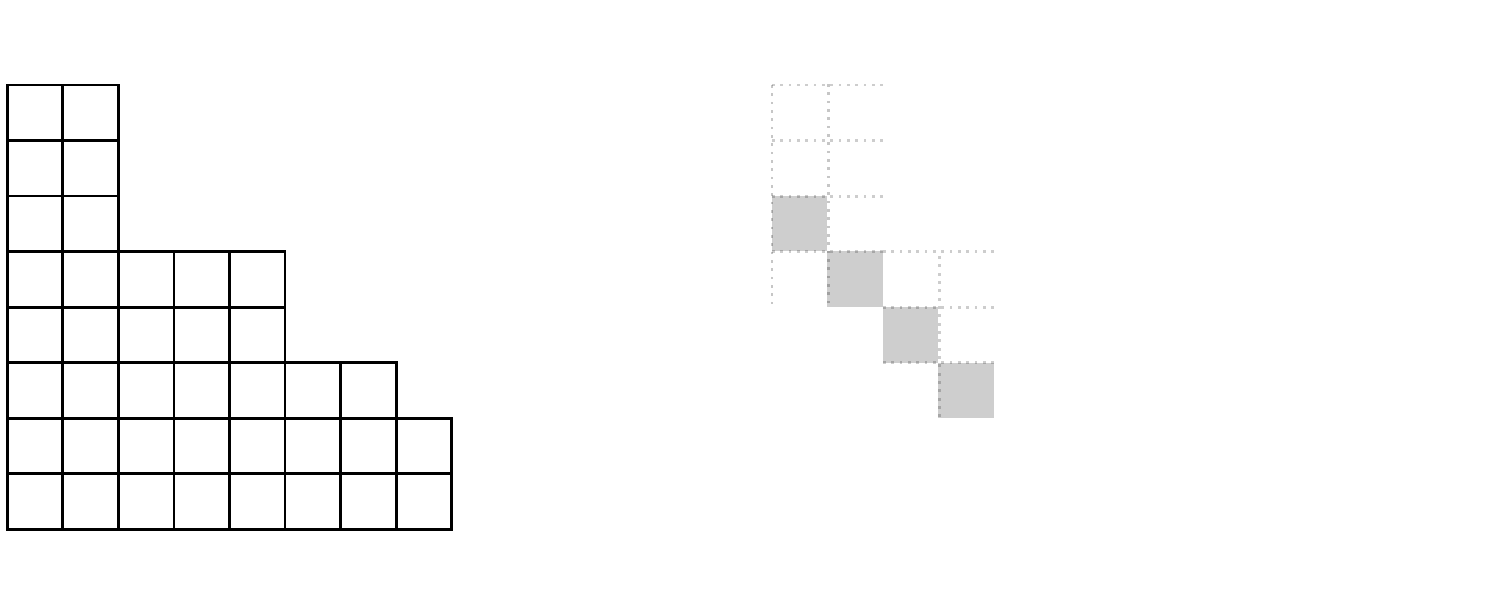}
		\caption{\label{figure_max} Maximal component for $\Psi_\lambda^{4,3}$}
	\end{figure}
\end{example}

\begin{proposition}\label{proposition_semifreee}
	The $S^1$-action on $\mcal{U}_{\lambda, -}^{a,b}$ generated by $\Psi_\lambda^{a,b}$ is semifree near $Z_{\lambda}^{a, b}$.
\end{proposition}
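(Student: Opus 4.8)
The plan is to recognize the $S^1$-action generated by $\Psi^{a,b}_\lambda$ as a \emph{collective Hamiltonian} flow built from the block $U(a+b-1)$-action on $\mcal{O}_\lambda$, and then to exploit that its Lie-algebra generator is $i$ times an idempotent; idempotency will force every stabilizer to be trivial or all of $S^1$. Writing $m := a+b-1$, the first step is to observe that $\Psi^{a,b}_\lambda = \sigma_a \circ \pi^{a,b}_\lambda$, where $\sigma_a \colon \mcal{H}_m \to \R$ sends a Hermitian matrix to the sum of its $a$ largest eigenvalues (all the summands $\Phi^{\ell, a+b-\ell}_\lambda$ factor through the \emph{same} leading block $\pi^{a,b}_\lambda$ of size $m$), and that $\pi^{a,b}_\lambda$ is exactly the moment map for the Hamiltonian action of $U(m)\hookrightarrow U(n)$ on $(\mcal{O}_\lambda, \omega_\lambda)$ obtained by restricting the conjugation action of $U(n)$ --- a standard feature of co-adjoint orbits (cf.\ \cite{GS}). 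On the open subset $\mcal{U}^{a,b}_- \subset \mcal{H}_m$ where the $a$-th and $(a+1)$-th eigenvalues are distinct (so that $\mcal{U}^{a,b}_{\lambda,-} = (\pi^{a,b}_\lambda)^{-1}(\mcal{U}^{a,b}_-)$), the function $\sigma_a$ is smooth, conjugation-invariant, and --- by the Ky Fan variational formula $\sigma_a(B) = \max_{\dim W = a} \operatorname{tr}(B|_W)$ together with the envelope theorem, the maximizer $W$ being the unique span of the top $a$ eigenvectors --- has differential $(d\sigma_a)_B = P_B$, the orthogonal projection onto that eigenspace.

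Next I would invoke the collective-Hamiltonian principle: since $\sigma_a$ is conjugation-invariant, the generator $(d\sigma_a)_{\pi^{a,b}_\lambda(A)} = P_{\pi^{a,b}_\lambda(A)}$ stays constant along the flow, so for a suitable normalization the $S^1$-action generated by $\Psi^{a,b}_\lambda$ on $\mcal{U}^{a,b}_{\lambda,-}$ is the genuine circle action
\[
	\Theta_t(A) = \widetilde{g_t(A)}\, A\, \widetilde{g_t(A)}^{\,*}, \qquad g_t(A) := \exp\!\big(2\pi i\, t\, P_{\pi^{a,b}_\lambda(A)}\big) \in U(m),
\]
where $\widetilde{(\cdot)}$ denotes the block embedding $U(m)\hookrightarrow U(n)$; here $g_t(A)$ has period $1$ in $t$ precisely because $P_{\pi^{a,b}_\lambda(A)}$ is idempotent. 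One also checks that this flow preserves $\mcal{U}^{a,b}_{\lambda,-}$, since conjugating by a unitary supported on the first $m$ coordinates preserves the spectrum of $\pi^{a,b}_\lambda(A)$; this reproves Lemma~\ref{lemma_Ham_action}.

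The final step is the idempotency argument proper. Fix $A \in \mcal{U}^{a,b}_{\lambda,-}$ and put $P := P_{\pi^{a,b}_\lambda(A)}$, a rank-$a$ orthogonal projection, so that $g_t(A) = I + (e^{2\pi i t} - 1)P$ has eigenvalues $1$ on $\ker P$ and $e^{2\pi i t}$ on $\operatorname{im} P$. For $t \notin \Z$ these two eigenvalues are distinct, so $\widetilde{g_t(A)}$ and the rank-$a$ projection $\widetilde{P}$ are polynomials in one another and hence have the same centralizer in $U(n)$; therefore $\Theta_t(A) = A$ (i.e.\ $[\widetilde{g_t(A)}, A] = 0$) is equivalent to $[\widetilde{P}, A] = 0$, which is equivalent to $A$ being $\Theta$-fixed. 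Consequently the $S^1$-stabilizer of each point of $\mcal{U}^{a,b}_{\lambda,-}$ is either all of $S^1$ (when the point is fixed) or trivial, so the action is semifree on the open set $\mcal{U}^{a,b}_{\lambda,-}$; since $Z^{a,b}_\lambda \subset \mcal{U}^{a,b}_{\lambda,-}$ by Lemma~\ref{lemma_codim}, it is in particular semifree near $Z^{a,b}_\lambda$. The only point requiring genuine care is the bookkeeping around the collective-Hamiltonian identification --- that $\pi^{a,b}_\lambda$ really is the $U(m)$-moment map and that $d\sigma_a$ is the spectral projection on $\mcal{U}^{a,b}_-$ --- but both facts are classical; alternatively one may avoid the collective-Hamiltonian formalism and instead compute the isotropy weights in equivariant Darboux coordinates at a point of $Z^{a,b}_\lambda$, where the very same idempotency shows that every weight lies in $\{0, \pm 1\}$.
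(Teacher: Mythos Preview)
Your argument is correct and takes a genuinely different route from the paper's. The paper argues \emph{combinatorially}: it invokes the Nishinou--Nohara--Ueda toric degeneration (Theorem~\ref{theorem_NNU}) to transport the question to the central toric fiber, locates a specific $T^n$-fixed point --- the preimage of a carefully chosen ``comb-shaped'' vertex $v^{a,b}_\lambda$ (Lemma~\ref{lemma_existencefocomb}) --- and then checks semifreeness at that single point by computing the pairings $\langle \ell, \vec{e_i}\rangle \in \{0,-1\}$ of the generator with the primitive edge vectors at that vertex (Lemma~\ref{sublemma_pointexists}). Your approach is \emph{representation-theoretic} and avoids all of this machinery: recognizing $\Psi^{a,b}_\lambda$ as a collective Hamiltonian for the block $U(m)$-action (Thimm's trick), you identify the flow explicitly as conjugation by $\widetilde{\exp(2\pi i t\,P)}$ with $P$ the spectral projection onto the top $a$ eigenspace, and then the idempotency of $P$ forces every isotropy group to be trivial or all of $S^1$. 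What this buys you is a cleaner proof that is global --- your conclusion is semifreeness on the whole open set $\mcal{U}^{a,b}_{\lambda,-}$, strictly stronger than the ``near $Z^{a,b}_\lambda$'' asserted in the proposition --- and it sidesteps the toric degeneration and the polytope combinatorics entirely. What the paper's approach buys is consistency with the combinatorial apparatus (ladder diagrams, edge vectors) already built up for other purposes in the paper, and it makes the isotropy weights at the fixed point visible as concrete lattice pairings. The only care needed in your write-up is the normalization matching the Hamiltonian flow of $\Psi^{a,b}_\lambda$ to $\Theta_t$; since any reparametrization preserves the dichotomy ``stabilizer trivial or full'', this is harmless for the semifreeness conclusion.
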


	Before proving Proposition~\ref{proposition_semifreee}, we recall some well-known facts needed for the proof. 
	For any (complex) $n$-dimensional projective toric variety $X$ with a projective embedding $\iota \colon X \rightarrow \p^N$,
	we denote by $T$ and $\frak{t}$ the compact torus in $(\C^*)^n$ acting on $X$ holomorphically and its Lie algebra, respectively.  
	Then a moment map $\mu_X \colon X \rightarrow \frak{t}^* \cong \R^n$ with respect to the K\"{a}hler form induced from the Fubini--Study form on $\p^N$
	is the restriction of a moment map $\mu_{\p^N}$ for the linearly extended Hamiltonian $T$-action on $\p^N$ 
	to $\iota(X)$.
	 Moreover, the image $\mu_X(X)$ is a convex polytope $\Delta_X$
	by the Atiyah--Guillemin--Sternberg convexity theorem \cite{At, GS2}. Let $\ell \in \frak{t}$ be any primitive integral vector that generates a circle subgroup of $T$ and let $e$ be 
	an edge of $\Delta_X$. Then the inverse image $\mu_X^{-1}(e)$ is a $T$-invariant 2-sphere and 
	the order of the isotropy subgroup of a point in $\mu_X^{-1}(\mathring{e})$ for the $S^1$-action 
	is equal to $| \langle \ell, \vec{e} \rangle |$ where $\vec{e}$ denotes 
	a primitive edge vector of $e$.
	
	Also, we employ the following toric degeneration constructed by Nishinou--Nohara--Ueda \cite{NNU} in order for us to pass our case into the toric case. 	
	\begin{theorem}[\cite{NNU}]\label{theorem_NNU}
	There exists a toric degeneration $\pi \colon \mcal{X} \rightarrow \C$ such that 
	\begin{itemize}
		\item the central fiber $X_0 = \pi^{-1}(0)$ is the toric variety associated to $\Delta_\lambda$, and
		\item For $X_1 = \pi^{-1}(1)$ is isomorphic to $\mcal{O}_\lambda$ as a complex manifold.
	\end{itemize}
	Furthermore, there is a continuous map $\phi \colon X_1 \rightarrow X_0$ making the diagram
	 \begin{equation}\label{equation_preserve_integrable_system}
		    \xymatrix{
			      X_1  \ar[dr]_{\Phi_\lambda} \ar[rr]^{\phi}
			      & & X_0 \ar[dl]^{\mu} \\ 
			      & \Delta_{\lambda} &
			    }
	  \end{equation}
	 commute where $\mu = (\mu^{i,j})$ is a moment map on $X_0$ and $\mu^{i,j}$ denotes the component corresponding the $(i,j)$-th coordinates of $\R^{\frac{n(n-1)}{2}}$
	 given in \eqref{equation_GC-pattern}.
	 In particular, we have 
	 $\mu^{i,j} \circ \phi = \Phi_\lambda^{i,j}$ so that the restriction 
	 \[
	 	\phi \colon \mcal{U}_{\lambda, -}^{i,j} \rightarrow X_0
	 \]	
	 is equivariant under the $S^1$-action on $\mcal{U}_{\lambda, -}^{i,j}$ generated by $\Phi_\lambda^{i,j}$ and the $S^1$-action on $X_0$ generated by $\mu^{i,j}$.
	\end{theorem}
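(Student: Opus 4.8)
The statement is essentially a repackaging of the toric degeneration of Nishinou--Nohara--Ueda \cite{NNU}, so the plan is to recall their construction and then check that the stated compatibilities drop out of it. \emph{Step 1 (the algebraic degeneration).} First I would embed $\mcal{O}_\lambda$ in $\p^N$ by the Pl\"ucker-type embedding determined by $\lambda$, so that its homogeneous coordinate ring is generated by Pl\"ucker coordinates, and choose the term order on these coordinates dictated by the Gelfand--Cetlin pattern~\eqref{equation_GC-pattern} --- this is where the combinatorics of $\Delta_\lambda$ enters. Following the line of Gonciulea--Lakshmibai and Kogan--Miller, one shows that the Pl\"ucker coordinates form a SAGBI basis and that the associated initial algebra is the semigroup ring of the cone over $\Delta_\lambda$. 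The Rees-algebra construction then produces a flat family $\pi \colon \mcal{X} \to \C$ with $\pi^{-1}(1) \cong \mcal{O}_\lambda$ and $\pi^{-1}(0) = X_0$ the projective toric variety of $\Delta_\lambda$; moreover $\mcal{X} \subset \p^N \times \C$ carries the diagonal torus $T$-action restricting on each fiber to the $T$-action whose moment map is recorded by the diagonal entries, and the Fubini--Study form induces a K\"ahler form $\Omega$ on the smooth locus of $\mcal{X}$.

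\emph{Step 2 (the map $\phi$).} Next I would build $\phi$ by the gradient--Hamiltonian flow on $(\mcal{X},\Omega)$: away from the critical locus of $\pi$, the vector field $V$ defined by $\iota_V\Omega = -d(\mathrm{Re}\,\pi)$ flows $\pi^{-1}(1)$ onto $\pi^{-1}(0)$, and the limiting map is a continuous surjection $\phi\colon X_1 \to X_0$ which is a symplectomorphism over the smooth locus of $X_0$. Since $\mathrm{Re}\,\pi$ is $T$-invariant, $V$ is tangent to the level sets of the $T$-moment map of $\mcal{X}$, so $\phi$ is $T$-equivariant and intertwines the $T$-moment maps of $X_1$ and $X_0$. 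Identifying the $T$-moment map on $X_0$ with $\mu = (\mu^{i,j})$, whose image is $\Delta_\lambda$ by the Atiyah--Guillemin--Sternberg convexity theorem \cite{At, GS2}, \cite{NNU} prove that $\mu\circ\phi$ is precisely the Gelfand--Cetlin system $\Phi_\lambda$; in particular $\mu^{i,j}\circ\phi = \Phi_\lambda^{i,j}$ for all $(i,j)$, which is the commuting diagram~\eqref{equation_preserve_integrable_system}.

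\emph{Step 3 ($S^1$-equivariance).} On $\mcal{U}_{\lambda,-}^{i,j}$ the function $\Phi_\lambda^{i,j} = \mu^{i,j}\circ\phi$ is smooth and generates a circle action, while the coordinate subcircle $S^1_{i,j}\subset T$ acts on $X_0$ with moment map $\mu^{i,j}$. Because $\phi$ intertwines the full $T$-moment maps and is a local symplectomorphism over the smooth locus, the Hamiltonian flow of $\Phi_\lambda^{i,j}$ is $\phi$-related to that of $\mu^{i,j}$, so $\phi\colon \mcal{U}_{\lambda,-}^{i,j}\to X_0$ is equivariant for the two $S^1$-actions; the point to note is that $T$-equivariance of $\phi$ is built into the gradient--Hamiltonian flow and does not require $\phi$ to be smooth, so the statement survives the restriction to the possibly non-smooth $\mcal{U}_{\lambda,-}^{i,j}$.

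\emph{Main obstacle.} The substantive content --- all due to \cite{NNU} --- is the commutative-algebra verification that the initial ideal of $\mcal{O}_\lambda$ for the Gelfand--Cetlin term order is prime and defines the toric variety of $\Delta_\lambda$ (equivalently, that the Pl\"ucker coordinates form a SAGBI basis), together with the identification $\mu\circ\phi = \Phi_\lambda$; the delicate analytic point is controlling the gradient--Hamiltonian flow as it approaches the singular locus of the central fiber so that $\phi$ is globally well-defined and continuous on $X_1$. Since both are established in \cite{NNU}, here we simply invoke that result.
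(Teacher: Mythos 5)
This statement is imported verbatim from \cite{NNU} and the paper supplies no proof of its own beyond the citation, so your proposal---recalling the SAGBI-basis/Rees-algebra degeneration and the gradient--Hamiltonian flow construction of $\phi$, and then invoking \cite{NNU} for the identity $\mu\circ\phi=\Phi_\lambda$---is essentially the same approach, namely an (accurately) expanded citation. The only point you state a bit more loosely than it deserves is the Step 3 equivariance, since the big torus of $X_0$ does not act globally on $X_1$ and the circle action generated by $\Phi_\lambda^{i,j}$ exists only on its smooth locus, but this compatibility is likewise part of what both you and the paper import from \cite{NNU}, so nothing essential is missing.
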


	To verify Proposition~\ref{proposition_semifreee}, it is sufficient to show that the $S^1$-action generated by $\Psi_\lambda^{a, b}$ is semifree at some fixed point in the maximal fixed component $Z_\lambda^{a,b}$ since it is connected by Lemma \ref{lemma_codim}. To show that the action is semifree, we will take one particular vertex.
	
	\begin{definition}
		For a given $\lambda$ in~\eqref{equation_lambda_monotone} and any lattice point ${(a,b)}$, let $s$ be the largest integer satisfying $n_s < n + 1 - b$ where $n_\bullet$'s are given in~\eqref{nidef}.
	 Consider the face $\gamma^{a,b}_\lambda$ associated with $(a,b)$ of the ladder diagram $\Gamma_\lambda$ such that the vertical segments in $\gamma^{a,b}_\lambda$ are \emph{exactly}
	  \begin{equation}\label{equation_characterize}
		\left\{\overline{(0,0), (0, b-1)}, \overline{(n_1, b-1) (n_1, n-n_1)}, \cdots, \overline{(n_s, b-1)(n_s, n- n_s)} \right\}. 
	\end{equation} 
	Note that such a face always exists since one can draw horizontal line segments including $\overline{(0,b-1),(n_s,b-1)}$ to make a face in Definition~\ref{def_face}.
	The graph is comb-shaped and it does not bound any closed regions so that the corresponding face is a vertex. The corresponding vertex is called a \emph{comb-shaped vertex} and denoted by $v^{a,b}_\lambda$. See Figure~\ref{Fig_combvert} for examples.
	\end{definition}
	
	\begin{figure}[h]
			\scalebox{0.85}{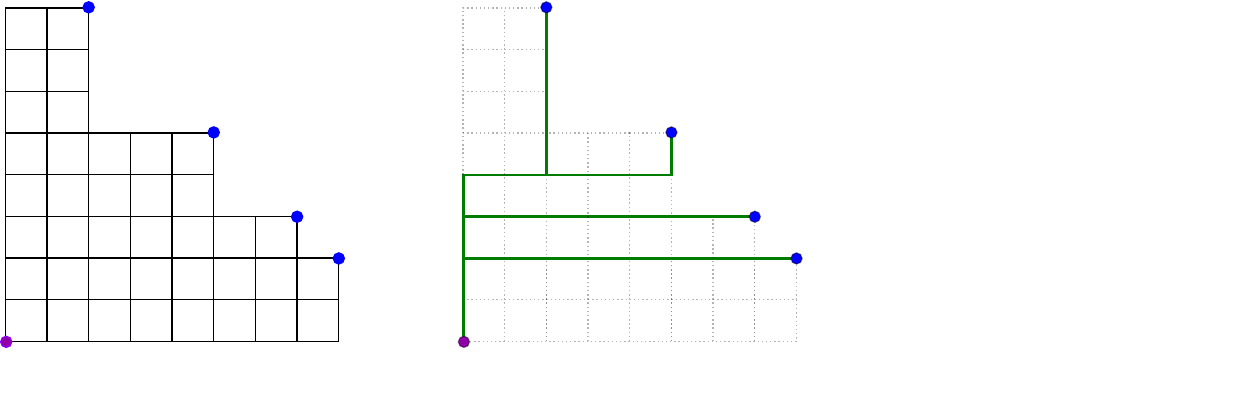}
			\caption{\label{Fig_combvert} Comb-shaped vertices}
	\end{figure}
	
	The comb-shaped vertices enjoy the following properties. 
	
\begin{lemma}\label{lemma_existencefocomb}
	For each lattice point $(a,b) \in \Gamma_\lambda$, the comb-shaped vertex $v^{a,b}_\lambda$ satisfies  
	\begin{enumerate}
		\item The inverse image of $v^{a,b}_\lambda$ is a point contained in $Z_\lambda^{a,b}$.
		\item The vertex $v^{a,b}_\lambda$ is incident to edges 
		$e_1, \cdots, e_{\dim {\Delta_\lambda}}$ such that the fiber over any point in $\mathring{e}_i$ is $S^1$
	\end{enumerate}
\end{lemma}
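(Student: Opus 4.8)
The plan is to read everything off from the combinatorics of the comb-shaped subgraph $\gamma^{a,b}_\lambda$, using the order-preserving bijection of Theorem~\ref{theorem_ACK} and the $L$-block decoding of \cite{CKO1}, and then to match the outcome against the description of $Z_\lambda^{a,b}$ obtained in the proof of Lemma~\ref{lemma_codim}.

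For part (1) I would argue in two steps. First, by construction $\gamma^{a,b}_\lambda$ is a union of positive paths enclosing no region, so $\dim\gamma^{a,b}_\lambda=0$ and $v^{a,b}_\lambda=\Psi(\gamma^{a,b}_\lambda)$ is indeed a vertex; moreover one checks directly from the comb pattern \eqref{equation_characterize} that its filling with $L$-blocks is empty (there is no unit box whose top and right edges are both walls while its interior stays wall-free), so the first bullet of Corollary~\ref{prposition_torifactors} forces $\Phi_\lambda^{-1}(v^{a,b}_\lambda)$ to be a single point. Second, I would compute the coordinates $u_{i,j}$ of $v^{a,b}_\lambda$ from the unbounded cut regions determined by the teeth at the columns $0,n_1,\dots,n_s$: using $n_s<n+1-b\le n_{s+1}$ and the monotone normalization \eqref{equation_lambda_monotone}, one gets $u_{i,a+b-i}(v^{a,b}_\lambda)=\lambda_i$ for $1\le i\le a$, which is precisely the defining system \eqref{equ_facess} of the face $F$ from the proof of Lemma~\ref{lemma_codim}; hence $v^{a,b}_\lambda\in F$. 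Finally, since $u_{a,b}$ and $u_{a+1,b-1}$ lie in the same row of the Gelfand--Cetlin pattern \eqref{equation_GC-pattern}, with $u_{a,b}$ immediately to the left of $u_{a+1,b-1}$, and the comb forces the left entry to be strictly larger here, we get $\Phi_\lambda^{-1}(v^{a,b}_\lambda)\subset\mcal{U}^{a,b}_{\lambda,-}$. Combining the two memberships, $\Phi_\lambda^{-1}(v^{a,b}_\lambda)\subset\mcal{U}^{a,b}_{\lambda,-}\cap\Phi_\lambda^{-1}(F)=Z_\lambda^{a,b}$.

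For part (2) the idea is to produce the edges by a box--edge bijection. There are exactly $\dim\Delta_\lambda$ unit boxes inside $\Gamma_\lambda$, one for each non-constant component of $\Phi_\lambda$, and to each box $\square^{(i,j)}$ I would associate the subgraph obtained from $\gamma^{a,b}_\lambda$ by adjoining the minimal collection of walls that turns $\square^{(i,j)}$ into an $L_1$-block (i.e. makes its top and right edges walls) and creates exactly one bounded region, without producing any other $L_1$-block. Theorem~\ref{theorem_ACK} then converts these into $\dim\Delta_\lambda$ pairwise distinct one-dimensional faces $e_1,\dots,e_{\dim\Delta_\lambda}$ incident to $v^{a,b}_\lambda$, so in particular $v^{a,b}_\lambda$ is a simple vertex of $\Delta_\lambda$. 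By construction the filling of $\gamma_{e_i}$ consists of a single $L_1$-block and contains no $L_k$-block with $k\ge2$, so the second bullet of Corollary~\ref{prposition_torifactors} gives that the fiber over any point of $\mathring{e}_i$ is a circle. As a cross-check one can instead run the Nishinou--Nohara--Ueda degeneration of Theorem~\ref{theorem_NNU}: over $\mathring{e}_i$ the continuous map $\phi$ restricts to a homeomorphism onto the relative interior of the corresponding edge of the moment polytope of the toric central fiber $X_0$, whose fiber is $T^1=S^1$.

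The step I expect to be the main obstacle is the combinatorial bookkeeping in part (2): verifying that the $\dim\Delta_\lambda$ subgraphs so built are pairwise distinct, that they \emph{exhaust} the edges incident to $v^{a,b}_\lambda$ (equivalently, that this vertex is simple), and that in each case no $L_k$-block with $k\ge2$ appears in the filling --- which is what makes the fiber exactly a circle rather than a larger product of odd spheres. This is a purely combinatorial statement about \eqref{equation_characterize} inside $\Gamma_\lambda$, but it splits into cases according to whether a given box lies under a horizontal bar of the comb, just to the left of a tooth, or inside one of the ``collapsed'' unbounded strips dictated by the monotone shape of $\lambda$, and tracking which unit boxes become $L$-blocks after the insertion of walls is where the care is needed. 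The coordinate computation in part (1) is comparatively routine once the filling of $\gamma^{a,b}_\lambda$ has been written out explicitly.
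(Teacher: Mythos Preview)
Your argument for part~(1) is essentially the paper's: the empty $L$-block filling forces a point fiber by Corollary~\ref{prposition_torifactors}, the horizontal bar of the comb yields the strict inequality placing that point in $\mcal{U}^{a,b}_{\lambda,-}$, and containment of $\gamma^{a,b}_\lambda$ in the face $\gamma_F$ of \eqref{equ_faceFF}--\eqref{equ_facess} puts it in $Z^{a,b}_\lambda$.

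Part~(2), however, does not go through as written. Your parametrization of edges by ``the box that becomes the $L_1$-block'' is not a bijection, and ``adjoin the minimal collection of walls'' does not single out a well-defined face. Concretely, cut $\Gamma_\lambda$ along $\gamma^{a,b}_\lambda$ into rectangles and take an $A$-type one (comb walls on bottom and right) of width $W$ and height $H$. If you try to make an interior box $\square^{(r,s)}$ with $r$ strictly less than the rightmost column into the $L_1$-block, you must introduce a new vertical wall at $x=r$; any face you build with a single bounded region having that wall on its right then contains an $L_2$-block as soon as the region is at least two boxes wide and two tall, so the fiber is not $S^1$. Conversely, for a box in the rightmost column there are many edges sharing it as their $L_1$-block (for instance the paper's $A_{p,q}$ for fixed $q$ and all $1\le p\le W$), so ``minimal'' is ambiguous. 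Either way the map from boxes to edges you describe is neither total nor injective.

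The paper sidesteps this by parametrizing edges not by the $L_1$-block position but by the \emph{shape} of the bounded region: into each $A$-type rectangle it inserts, anchored at the bottom-right corner, an L-shaped block $A_{p,q}$ with horizontal arm of length $p$ and vertical arm of length $q$, for all $1\le p\le W$ and $1\le q\le H$; dually a block $B_{p,q}$ at the top-left corner of each $B$-type rectangle. This yields exactly $WH$ distinct one-dimensional faces per rectangle and hence $\dim\Delta_\lambda$ in total, and since each arm has thickness one no $L_k$ with $k\ge2$ fits, so Corollary~\ref{prposition_torifactors} gives the $S^1$ fiber. Note finally that the lemma neither asserts nor requires simplicity of $v^{a,b}_\lambda$; exhibiting $\dim\Delta_\lambda$ such edges is all that is used downstream in Proposition~\ref{proposition_semifreee}.
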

	
\begin{proof}
	The filling of $\gamma^{a,b}_\lambda$ with $L$-blocks is empty, that is, there does \emph{not} exist any $L$-block satisfying the condition \eqref{equation_L_block}. By Corollary~\ref{prposition_torifactors}, the fiber over $v^{a,b}_\lambda$ is a point. 
Because of presence of the horizontal segment $\overline{(0,b-1),(n_s,b-1)}$ in the comb-shaped vertex, 
we have $u_{a,b} > u_{a, b-1}$ and therefore it must be contained in $\mcal{U}_{\lambda,-}^{a,b}$. Since $\gamma^{a,b}_\lambda$ is contained in $\gamma_F$ corresponding to~\eqref{equ_facess}, $v^{a,b}_\lambda$ is contained in $Z^{a,b}_{\lambda}$ by~\eqref{equ_maxiamalcom} so that $(1)$ is confirmed.  

	Cut $\Gamma_\lambda$ along $\gamma^{a,b}_\lambda$ into several regions. 
	Because of our choice of comb-shaped vertices, the cut regions are all rectangular. 
	Observe that there are two types of cut rectangles$\colon$ the first type contains pieces of $\gamma^{a,b}_\lambda$ in both the bottom edge and the right edge (say $A$-type) and the second type contains pieces of $\gamma^{a,b}_\lambda$ in both the top edge and the left edge (say $B$-type). 
	Since $\gamma^{a,b}_\lambda$ cannot contain any bounded regions, a cut rectangle is of either $A$-type or $B$-type, but not both. For instance, in Figure~\ref{Fig_combvert}, $\gamma_\lambda^{3,5}$ is cut into two $A$-type regions and three $B$-type regions ,and $\gamma_\lambda^{4,3}$ is cut into three $A$-type regions and one $B$-type region.
	
	To construct edges in $(2)$, set
	\begin{itemize}
	\item $\displaystyle
		A_{p,q}(i,j) := \bigcup_{1 \leq t \leq p}  \square^{(i-t+2,j)}	\cup \bigcup_{1 \leq s \leq q}  \square^{(i+1,j + s - 1 )}
	$
	\item $\displaystyle
		B_{p,q}(i,j) := \bigcup_{1 \leq t \leq p}  \square^{(i+t-1,j+1)}	\cup \bigcup_{1 \leq s \leq q}  \square^{(i,j - s + 2)},
	$
	\end{itemize}
	as in  Figure~\ref{figure_pq}.
	
	\begin{figure}[H]
		\scalebox{0.7}{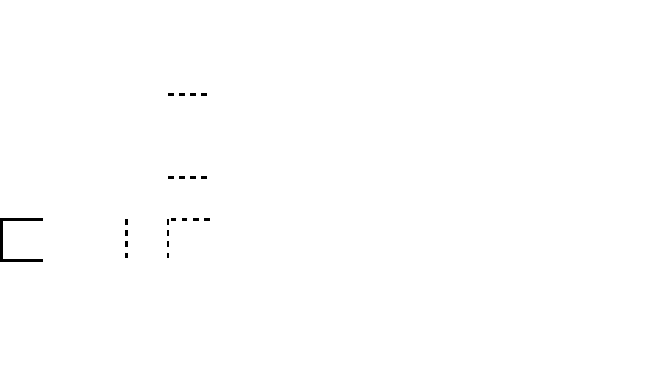}
		\caption{\label{figure_pq} $A_{p,q}(i,j)$ and $B_{p,q}(i,j)$.}
	\end{figure}
	Consider the faces in $\Gamma_\lambda$ which is given by the union of
	\begin{itemize}
		\item The comb-shaped face $\gamma_\lambda^{a,b}$ associated with $(a,b)$ and
		\item The boundary of one \emph{single} block $A_{\bullet,\bullet}(\bullet,\bullet)$ (resp. $B_{\bullet,\bullet}(\bullet,\bullet)$) such that 
		\begin{enumerate}
			\item the rightmost edge and the bottom edge (resp. the leftmost edge and the upper edge) contained in $\gamma_\lambda^{a,b}$
			\item it is fully contained in a single cut rectangle of $A$-type (resp. $B$-type).  
		\end{enumerate}
	\end{itemize}
	See Figure~\ref{figure_vertical_wall} as examples. 
	Since every graph has one bounded region, the corresponding face is one-dimensional.  
	Notice that there are exactly $\dim \Delta_\lambda$ many such faces. 
	Let $\{e_1, \cdots, e_{\dim \Delta_\lambda} \}$ be the corresponding edges of $\triangle_\lambda$.	
	Furthermore, that the generic fiber on each $e_i$ is $S^1$ by Corollary \ref{prposition_torifactors}. 
	\end{proof}	 	
	
	\begin{example}\label{example_edges}
		Let $\lambda$ and $(a,b) = (3,5)$ be given in Figure~\ref{Fig_combvert}. 
		The graph $\gamma^{3,5}_\lambda$ divides $\Gamma_\lambda$ into five regions. 
		There are two $A$-type regions and three $B$-type regions. 
		Figure \ref{figure_vertical_wall} illustrates the faces corresponding to edges $\{e_1, \cdots, e_{\dim \Delta_\lambda}\}$ in Lemma~\ref{lemma_existencefocomb}.  
		\begin{figure}[h]
			\scalebox{0.6}{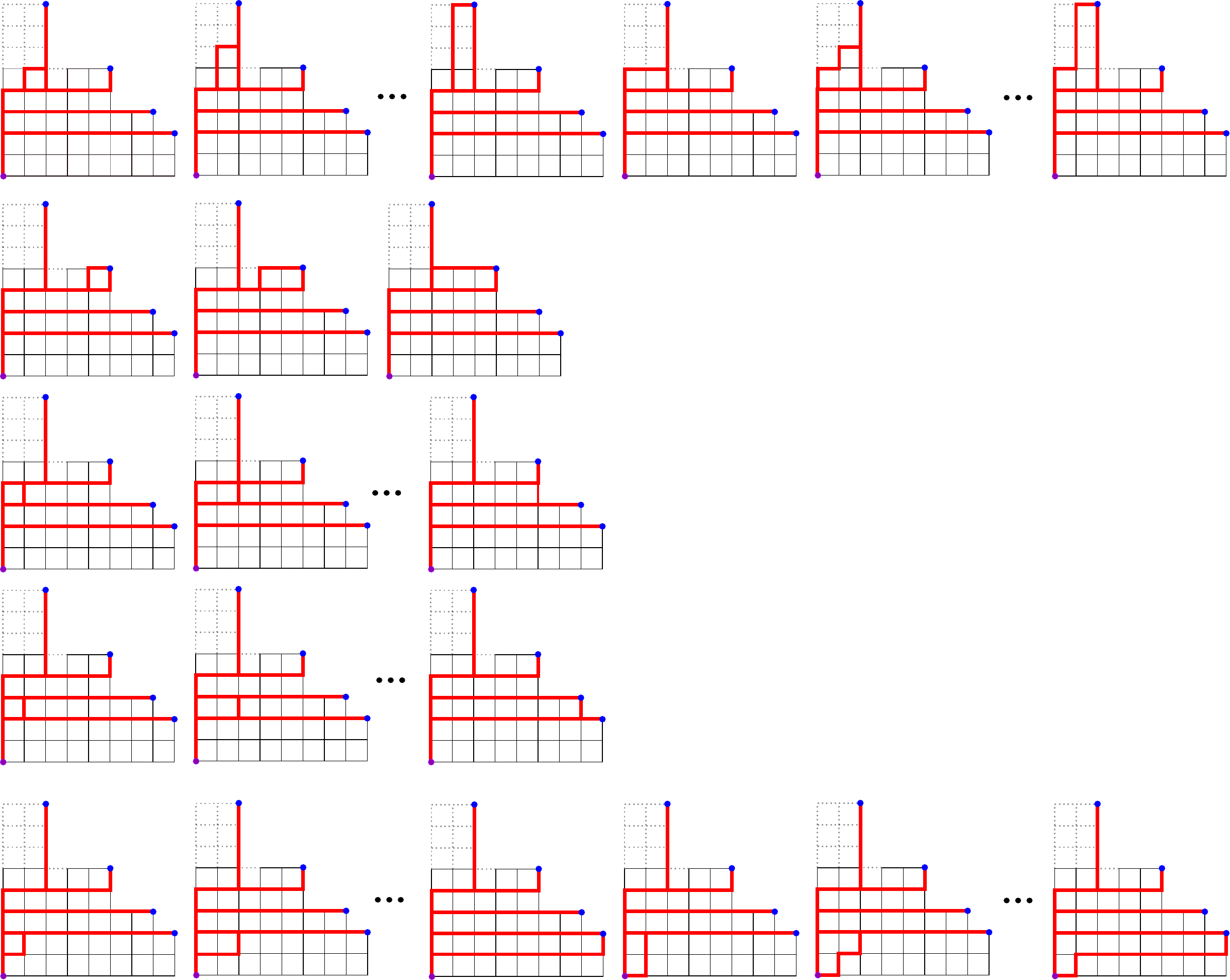}
			\caption{\label{figure_vertical_wall} Edges from Lemma~\ref{lemma_existencefocomb}.}
		\end{figure}
	\end{example}

	 \begin{lemma}\label{sublemma_pointexists}
		Let $\{e_1, \cdots, e_{\dim \triangle_\lambda}\}$ be the set of edges chosen in Lemma~\ref{lemma_existencefocomb}. Set $\vec{e_i}$ to be the primitive edge vector of $e_i$ starting from the comb-shaped vertex $v^{a,b}_\lambda$. Let $\ell = (\ell^{i,j}) \in \frak{t} \cong \R^{\dim \Delta_\lambda}$ be given by 
		\begin{equation}\label{equ_ellell}
			\begin{cases}	
				\ell^{i,j} = 1 & 1 \leq i \leq a, \quad j = a+b-i \\
				\ell^{i,j} = 0 & \text{otherwise}.
			\end{cases}
		\end{equation}
		Then we have
	  	\begin{equation}\label{eq_weightcal}
	  		\langle \ell, \vec{e_i} \rangle = 0 ~\text{or} ~-1 
	  	\end{equation}
	  	for every $i=1,\cdots, \dim \Delta_\lambda$.
	 \end{lemma}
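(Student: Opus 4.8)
The plan is to read $\langle \ell, \vec{e_i}\rangle$ as the rate of change of the partial trace $\Psi^{a,b}_\lambda$ along the edge $e_i$ and then to exploit that the comb-shaped vertex $v^{a,b}_\lambda$ sits in the maximum locus of that partial trace. By \eqref{equ_factorthru} the partial trace $\Psi^{a,b}_\lambda$ descends to the affine function $\overline{\Psi}^{a,b}_\lambda \colon \R^{\dim\Delta_\lambda}\to\R$, $\textbf{\textup{u}}\mapsto \sum_{k=1}^{a}u_{k,a+b-k}$, and by the choice of $\ell$ in \eqref{equ_ellell} this is precisely the linear functional $\textbf{\textup{u}}\mapsto\langle\ell,\textbf{\textup{u}}\rangle$. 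Hence $\langle\ell,\vec{e_i}\rangle$ is the directional derivative of $\overline{\Psi}^{a,b}_\lambda$ at $v^{a,b}_\lambda$ in the direction $\vec{e_i}$. Now Lemma~\ref{lemma_existencefocomb}(1) places $v^{a,b}_\lambda$ in $Z_\lambda^{a,b}$, and the proof of Lemma~\ref{lemma_codim} shows $Z_\lambda^{a,b}=\mcal{U}_{\lambda,-}^{a,b}\cap\Phi_\lambda^{-1}(F)$, where $F$ is the face of $\Delta_\lambda$ on which $\overline{\Psi}^{a,b}_\lambda$ attains its maximum value $\sum_{k=1}^{a}\lambda_k$ over all of $\Delta_\lambda$. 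Since the edge $e_i$ emanates from the vertex $v^{a,b}_\lambda\in F$ into $\Delta_\lambda$, the affine function $\overline{\Psi}^{a,b}_\lambda$ cannot increase along it; hence $\langle\ell,\vec{e_i}\rangle\le 0$, which is half of \eqref{eq_weightcal}.

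For the bound $\langle\ell,\vec{e_i}\rangle\ge -1$ I would use the explicit description of $e_i$ from the proof of Lemma~\ref{lemma_existencefocomb}: the ladder face $\gamma_{e_i}$ is the comb $\gamma^{a,b}_\lambda$ together with the boundary of a single block of type $A_{p,q}(\cdot,\cdot)$ or $B_{p,q}(\cdot,\cdot)$, so the only GC coordinates $u_{k,l}$ that can vary along $e_i$ are those indexed by the unit squares of that block. The key combinatorial observation is that among the unit squares of such an L-shaped block at most one lies on the anti-diagonal line $\{(k,l):k+l=a+b\}$ on which $\ell$ is supported: each of the two straight arms of the block meets this line in at most one square, and if both arms would meet it they do so at their common corner square. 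Since $\ell^{k,l}=1$ exactly for $1\le k\le a$ with $l=a+b-k$, it follows that $\langle\ell,\vec{e_i}\rangle$ equals either $0$ or the single entry $(\vec{e_i})_{k_0,l_0}$ of $\vec{e_i}$ at the unique square $\square^{(k_0,l_0)}$ on that anti-diagonal. Because $e_i$ is an edge and, by Lemma~\ref{lemma_existencefocomb}(2) together with \eqref{equ_circlefacctt}, the fiber over $\mathring{e_i}$ is the circle $S^1$, the filling of $\gamma_{e_i}$ consists of exactly one $L_1$-block, and a direct inspection of the two block types shows that every entry of the primitive edge vector $\vec{e_i}$ lies in $\{-1,0,1\}$. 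Combined with $\langle\ell,\vec{e_i}\rangle\le 0$ from the first paragraph, this yields $\langle\ell,\vec{e_i}\rangle\in\{0,-1\}$.

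The delicate part is this last inspection of the blocks $A_{p,q}$ and $B_{p,q}$: one has to trace, through the $L$-block topology algorithm of \cite{CKO1} (Section~\ref{ssecTopologyOfGelfandCetlinFibers}), exactly which GC coordinates deform along $e_i$ and by how much, and verify that the primitive edge vector has all entries in $\{0,\pm1\}$ (equivalently, that the unique $L_1$-block of $\gamma_{e_i}$ accounts for the whole one-dimensional deformation of the face and the remaining squares of the block are forced). I expect this to be a finite case analysis rather than a conceptual difficulty; once it is in place, the sign in \eqref{eq_weightcal} requires no further work, being dictated by the maximality of $\overline{\Psi}^{a,b}_\lambda$ at the comb-shaped vertex established in the first paragraph.
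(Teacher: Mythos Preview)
Your approach is correct and differs from the paper's in an instructive way. For the bound $\langle\ell,\vec{e_i}\rangle\le 0$ you invoke the maximality of the partial trace $\overline{\Psi}^{a,b}_\lambda$ at $v^{a,b}_\lambda$, which is a clean moment-map argument; the paper instead argues combinatorially, distinguishing $A$-type from $B$-type blocks by their position relative to the horizontal segment $\overline{(0,b-1)(n_s,b-1)}$ of the comb, and reading off the sign of the edge vector directly (components are $-1$ for $A$-type, $+1$ for $B$-type). Both routes then need the observation you make explicit, that an $A_{p,q}$ or $B_{p,q}$ block meets the anti-diagonal $\{k+l=a+b\}$ in at most one unit square; the paper leaves this implicit. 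What you flag as the ``delicate part'' --- that the primitive edge vector has entries in $\{-1,0,1\}$ --- the paper dispatches more directly than your proposed case analysis: since $\gamma_{e_i}$ has exactly one bounded region (the inserted block), the face correspondence $\Psi$ of Theorem~\ref{theorem_ACK} forces all GC coordinates inside the block to be equal and all coordinates outside to be pinned to their values at $v^{a,b}_\lambda$; hence $\vec{e_i}$ is $\pm$ the characteristic vector of the block, with the sign fixed by min-max. No appeal to the $L$-block topology algorithm is needed.
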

	  	  
	\begin{proof}
	Suppose that the face $\gamma_{e_i}$ corresponding to the edge $e_i$ contains a (single) bounded region. 	Let $\overrightarrow{v_i}$ be the vector such that the components in the bounded region are one and the other components are zero. 
	Note that the edge $e_i$ is parallel to $\overrightarrow{v_i}$. 
	If the single bounded region is of type $A$ (resp. $B$), by the min-max principle, the components in the block of type $A$ decrease (resp. increase) when traveling along the edge $e_i$ from the vertex $v^{a,b}_\lambda$.
	Therefore, $- \overrightarrow{v_i}$ (resp. $\overrightarrow{v_i}$) is the primitive edge vector $\vec{e_i}$ of $e_i$ starting from $v^{a,b}_\lambda$ of type $A$ (resp. $B$).	
	In other words,
		\begin{enumerate}
			\item The $(r,s)$-th component of $\vec{e_i}$ is $-1$ if and only if $\square^{(r,s)}$ is contained in the bounded region of $\gamma_{e_i}$ and the bounded region is of $A$-type. 
			\item The $(r,s)$-th component of $\vec{e_i}$ is $+1$ if and only if $\square^{(r,s)}$ is contained in the bounded region of $\gamma_{e_i}$ and the bounded region is of $B$-type. 
			\item The $(r,s)$-th component of $\vec{e_i}$ is $0$ if and only if $\square^{(r,s)}$ is not contained in the bounded region of $\gamma_{e_i}$. 
		\end{enumerate}	
	
	To construct the edges in Lemma~\ref{lemma_existencefocomb}, we insert a single $A$-type (resp. $B$-type) block into an $A$-type (resp. $B$-type) rectangular cut region.
	Recall that every rectangular region of $A$-type (resp. $B$-type) is above (resp. below) the horizontal segment $\overline{(0,b-1),(n_s,b-1)}$. Since all components $\ell^{i,j}$ for $j < b$ vanish, $\langle \ell, \vec{e_i} \rangle = 0$ if ${e_i}$ is of type $B$. 
	If ${e_i}$ is of type $A$, then $\langle \ell, \vec{e_i} \rangle$ is either $-1$ or $0$ since a component of $\vec{e_i}$ is either $-1$ or $0$. It completes the proof.
\end{proof}	

\begin{example}
	Let us revisit Example~\ref{example_edges}. Consider two faces $\gamma_{e_1}$ and $\gamma_{e_2}$ in Figure~\ref{figure_primve} as an example. The corresponding primitive edge vectors are 
$$
\overrightarrow{e_1} = (a_{i,j}) \mbox{ where $a_{i,j} = $} \begin{cases}
-1 &\quad \mbox{if $(i,j) = (1,5), (2,5), (2,6), (2,7), (2,8)$} \\
0 &\quad \mbox{otherwise}
\end{cases}
$$
and 
$$
\overrightarrow{e_2} = (b_{i,j}) \mbox{ where $b_{i,j} = $} \begin{cases}
1 &\quad \mbox{if $(i,j) = (1,1), (1,2), (2,2)$} \\
0 &\quad \mbox{otherwise}.
\end{cases}
$$
Then $\ell = (\ell^{i,j}) \in \frak{t} \cong \R^{\dim \Delta_\lambda}$ in~\eqref{equ_ellell} with $(a,b) = (3,5)$ satisfy $\langle \ell, \overrightarrow{e_1} \rangle = -1$ and $\langle \ell, \overrightarrow{e_2} \rangle = 0$.
	 \begin{figure}[h]
			\scalebox{0.75}{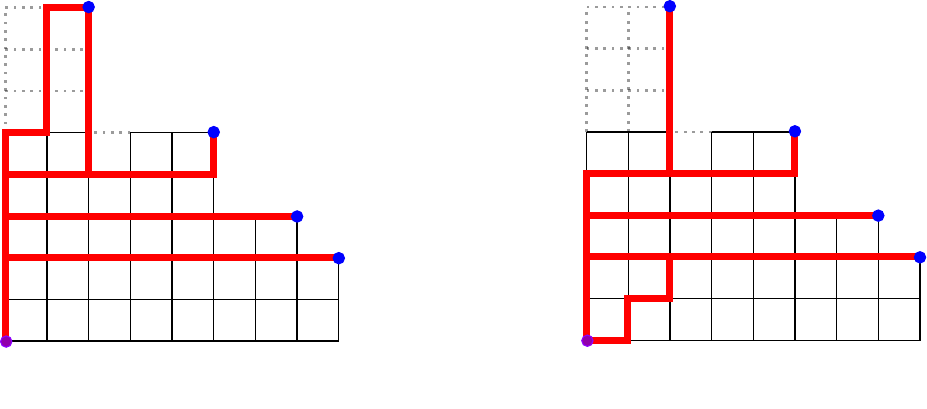}
			\caption{\label{figure_primve} Primitive edge vectors in Lemma~\ref{sublemma_pointexists}.}
	\end{figure}
\end{example}
	
	Now we start the proof of Proposition~\ref{proposition_semifreee}.
	
	\begin{proof}[Proof of Proposition~\ref{proposition_semifreee}]
	By Lemma~\ref{lemma_existencefocomb}, the fiber over the vertex $v^{a,b}_\lambda$ consists of a single point $z$. 
	Note that each Hamiltonian $\Phi_\lambda^{i,j}$ is smooth at the point $z$. 
	The point is a zero of its Hamiltonian vector field. 
	In particular, $z$ is a fixed point of the Hamiltonian $T^n$-action\footnote{The $T^n$-action is generated by the traces of leading principal submatrices
	$\{ \Psi_\lambda^{1,1}, \Psi_\lambda^{2,1}, \cdots, \Psi_\lambda^{n,1} \}$, which are smooth on $\mcal{O}_\lambda$.}
	where $T^n$ is the maximal torus of $U(n)$ acting on $(\mcal{O}_\lambda, \omega_\lambda)$
	in a Hamiltonian fashion. 
	Since the $T^n$-action is holomorphic on $\mcal{O}_\lambda$, the linearized $T^n$-action on $T_z \mcal{O}_\lambda$ is unitary and hence 
	the tangent space $T_z \mcal{O}_\lambda$ is decomposed into one-dimensional $T^n$-representations $\xi_1, \cdots, \xi_{\dim \Delta_\lambda}$
	so that 
	\[
		T_{z} \mcal{O}_\lambda \cong \bigoplus_{i=1}^{\dim \Delta_\lambda} \xi_i 
	\] 
	where $\xi_i$'s are pairwise distinct\footnote{An almost complex manifold $(M,J)$ equipped with a torus action preserving $J$ is called a {\em GKM manifold} if 
	the fixed point set is finite and weights at each fixed point are pairwise disjoint. A partial flag manifold is an example of a GKM manifold, see \cite{GHZ}.}.
	Moreover, each representation space $\xi_i$ is equal to the tangent space $T_z \Phi_\lambda^{-1}(e_i) \cong \C$. Therefore, it is enough to show that 
	the $S^1$-action generated by $\Psi_\lambda^{a,b}$ is semifree on each $T_z \Phi_\lambda^{-1}(e_i)$. 
	Note that by Theorem \ref{theorem_NNU}, 
	the restriction of the map $\phi \colon X_1 \rightarrow X_0$ to $\mcal{U}_{\lambda,-}^{a,b}$ is $S^1$-invariant under the $S^1$-action 
	generated by $\mu^{1, a+b -1} + \cdots + \mu^{a,b}$, a partial trace of the moment map $\mu = (\mu^{i,j}) : X_0 \rightarrow \Delta_\lambda$. 
	Therefore, we only need to check 
	\[
		\langle \ell, \vec{e_i} \rangle = 0 ~\text{or}~ - 1, \quad i=1,\cdots, \Delta_\lambda
	\]
	where $\ell \in \frak{t}$ that generates the circle action generated by $\mu^{1, a+b -1} + \cdots + \mu^{a,b}$. It follows from Lemma~\ref{sublemma_pointexists}. 
	\end{proof}	

\begin{lemma}\label{lemma_fundamental_group}
	Let $f$ be a face of $\Delta_\lambda$ of dimension $k$. 
	For any $\textbf{\textup{u}} \in \mathring{f}$, the fundamental group of $\Phi^{-1}_\lambda(\textbf{\textup{u}})$
	is generated by $\{\sigma_1, \cdots \sigma_k\}$ where the vertices $(a_1,b_1), \cdots, (a_k, b_k)$ of $\Gamma_\lambda$ 
	are defined in \eqref{equation_labeling} and $\sigma_i$ is any free orbit of the $S^1$-action generated by $\Psi_\lambda^{a_i, b_i}$ in $\Phi^{-1}_\lambda(\textbf{\textup{u}})$
	for $i=1,\cdots,k$.
\end{lemma}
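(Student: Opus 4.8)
The plan is to combine the identification $\pi_1(\Phi_\lambda^{-1}(\textbf{\textup{u}}))\cong\Z^k$ from Proposition~\ref{theorem_homotogroup} with the iterated bundle description of the fiber in \cite[Theorem 5.12]{CKO1}, matching each loop $\sigma_i$ with one of the $k$ trivial circle factors of the splitting~\eqref{equ_circlefacctt}. First I would check that $\sigma_i$ is well posed: since $\Psi_\lambda^{a_i,b_i}=\Phi_\lambda^{1,a_i+b_i-1}+\cdots+\Phi_\lambda^{a_i,b_i}$ is a sum of components of $\Phi_\lambda$, it is constant on $\Phi_\lambda^{-1}(\textbf{\textup{u}})$, so its Hamiltonian flow preserves that fiber; by Lemma~\ref{lemma_smooth_on_Lag} the fiber lies in $\mcal U_{\lambda,-}^{a_i,b_i}$, where $\Psi_\lambda^{a_i,b_i}$ is smooth and generates a Hamiltonian circle action (Lemma~\ref{lemma_Ham_action}). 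Lemma~\ref{lemma_exist_freeorbit} then supplies a free orbit $\sigma_i$ inside the fiber, and all free orbits of this action are freely homotopic in the fiber because the free locus of an effective circle action on a connected manifold is open, dense, and connected. Since any generating set of $\Z^k$ with $k$ elements is automatically a $\Z$-basis, it suffices to prove that $\{[\sigma_1],\dots,[\sigma_k]\}$ generates $\pi_1(\Phi_\lambda^{-1}(\textbf{\textup{u}}))$.

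Next I would match $\mcal I_f$ with the $L_1$-blocks of $\gamma_f$. If $(a_i,b_i)\in\mcal I_f$ is the upper rightmost vertex of a bounded region of $\gamma_f$, then by the ordering~\eqref{equation_labeling} neither $\square^{(a_i+1,b_i)}$ nor $\square^{(a_i,b_i+1)}$ belongs to that region, so the top and right edges of $\square^{(a_i,b_i)}$ are walls of $\gamma_f$ while its interior contains none; as the corner square of an $L_k$-block with $k\ge2$ does not have its top edge as a wall, $\square^{(a_i,b_i)}$ must be an $L_1$-block, and conversely every $L_1$-block of the filling arises this way. By~\eqref{equ_circlefacctt} there are exactly $k=\dim f$ of them and $\Phi_\lambda^{-1}(\textbf{\textup{u}})\cong(S^1)^k\times Y_f$ with $\pi_1(Y_f)=0$, where the $i$-th circle factor is the one attached to the $L_1$-block $\square^{(a_i,b_i)}$.

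The core step, which I also expect to be the main obstacle, is to show that this $i$-th circle factor is freely homotopic in $\Phi_\lambda^{-1}(\textbf{\textup{u}})$ to $\sigma_i$. For this I would go back into the construction of the iterated bundle $E_n\to\cdots\to E_1$ in \cite[Theorem 5.12]{CKO1}: at the reduction step for the row $a_i+b_i$ one reduces by the Hamiltonian circle action generated by the partial trace $\Psi_\lambda^{a_i,b_i}$, which is smooth and semifree near the relevant fixed locus exactly as in Lemma~\ref{lemma_smooth_on_Lag} and Proposition~\ref{proposition_semifreee}, and the $L_1$-block condition~\eqref{equation_L_block} forces the fiber of that step to be precisely the circle swept out by that action. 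Because the generators $\Psi_\lambda^{a_1,b_1},\dots,\Psi_\lambda^{a_k,b_k}$ Poisson commute (Theorem~\ref{theorem_GC_generalfacts}(1)), their circle actions commute near $\Phi_\lambda^{-1}(\textbf{\textup{u}})$ and assemble into a torus action preserving the fiber that, under the splitting~\eqref{equ_circlefacctt}, is conjugate to coordinate rotation on $(S^1)^k$; hence $\sigma_i$ is freely homotopic to $S^1\times\{\mathrm{pt}\}$ in the $i$-th factor, so $[\sigma_i]$ maps to $\pm e_i$ under $\pi_1(\Phi_\lambda^{-1}(\textbf{\textup{u}}))\cong\Z^k$, and $\{[\sigma_1],\dots,[\sigma_k]\}$ is a basis.

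The delicate part is pinning down that the trivial circle factors of \cite[Theorem 5.12]{CKO1} are genuinely the orbits of the partial traces $\Psi_\lambda^{a_i,b_i}$, rather than of some other nearby circle action such as the (non-smooth) component $\Phi_\lambda^{a_i,b_i}$ alone, and that the $k$ reductions can be run independently and simultaneously so that the combined torus action is free on a dense subset of the fiber; this requires carefully tracking the partial-trace actions through each reduction step of the ladder-diagram construction. The remaining ingredients, namely the combinatorics of $L_1$-blocks and the elementary facts about $\Z^k$, are routine.
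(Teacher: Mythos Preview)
Your overall strategy---identify $\pi_1(\Phi_\lambda^{-1}(\textbf{\textup{u}}))\cong\Z^k$ with the $k$ trivial circle factors of~\eqref{equ_circlefacctt} and then express each $[\sigma_i]$ in that basis---matches the paper's. The gap is in the core step: your claim that $[\sigma_i]=\pm e_i$ is false in general, and the argument you sketch for it (``the reduction step for the row $a_i+b_i$ is by the partial trace $\Psi_\lambda^{a_i,b_i}$, so the $i$-th circle factor is swept out by that action'') misidentifies which Hamiltonian produces the $i$-th circle factor.

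Concretely, take $\mcal{O}_\lambda=\mcal{F}\ell(3)$ and $f=\Delta_\lambda$, so $(a_1,b_1)=(1,2)$, $(a_2,b_2)=(2,1)$, $(a_3,b_3)=(1,1)$. The circle factor attached to the $L_1$-block $\square^{(a_i,b_i)}$ in \cite[Theorem~5.12]{CKO1} is the orbit of the \emph{eigenvalue} component $\Phi_\lambda^{a_i,b_i}$, not of the partial trace. Writing $\tau_i$ for that orbit and identifying $[\tau_i]=e_i$, one has
\[
\Psi_\lambda^{2,1}=\Phi_\lambda^{1,2}+\Phi_\lambda^{2,1},\qquad\text{hence}\qquad [\sigma_2]=[\tau_1]+[\tau_2]=e_1+e_2\neq\pm e_2.
\]
So the $\Psi$-torus action is \emph{not} conjugate to coordinate rotation on $(S^1)^k$.

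What the paper actually proves is the weaker, but sufficient, statement
\[
[\sigma_i]=(\underbrace{*,\ldots,*}_{i-1},1,0,\ldots,0)\in\Z^k
\]
in the $\tau$-basis. The argument splits $\Psi_\lambda^{a_i,b_i}=\sum_{j'=1}^{a_i}\Phi_\lambda^{j',\ell-j'}$ (with $\ell=a_i+b_i$) into the summands with $(j',\ell-j')\in\mcal{I}_f$, which contribute $\tau_{i-m+1}+\cdots+\tau_i$ (all the $L_1$-blocks on the $\ell$-th anti-diagonal with first index $\le a_i$), and the remaining summands, whose circle actions are trivial on the stage $E_\ell$ and hence can only contribute to coordinates $j<i-m+1$. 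The resulting change of basis from $\{\tau_i\}$ to $\{\sigma_i\}$ is unipotent upper-triangular over~$\Z$, so $\{[\sigma_i]\}$ is again a basis. You flagged exactly the right ``delicate part''; the fix is not to show $[\sigma_i]=\pm e_i$, but to establish this triangular relation.
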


\begin{proof}

	Let $f$ be a $k$-dimensional face of $\Delta_\lambda$ and let $\textbf{\textup{u}} = (u_{i,j}) \in \mathring{f}$.
	As we have seen in Section \ref{ssecTopologyOfGelfandCetlinFibers}, $\Phi_\lambda^{-1}(\textbf{\textup{u}})$ is the total space of an $n$-stage iterated bundle 
	\[
		E_n \stackrel{\pi_n} \longrightarrow E_{n-1} \stackrel{\pi_{n-1}}\longrightarrow \cdots \longrightarrow E_2 \stackrel{\pi_2}\longrightarrow 
		E_1  = \mathrm{point}
	\]
	where each $E_l$ is a subset of the set  $\mcal{H}_l$ of $(l \times l)$ Hermitian matrices and $\pi_l \colon E_l \rightarrow E_{l-1}$ is the projection from a $(l \times l)$ Hermitian matrix to 
	its $((l-1) \times (l-1))$ leading principal submatrix.
	
	For each $i=1,\cdots, k$, we first describe the $S^1$-action on $\Phi_\lambda^{-1}(\textbf{\textup{u}})$ generated by $\Phi_\lambda^{a_i, b_i}$ more explicitly. 
	Suppose that $(a_i,b_i)$ is located at the $\ell$-th anti-diagonal, i.e., $a_i + b_i = \ell$, and consider the set 
	\begin{equation}\label{equ_setset}
		\mcal{I}_\ell := \{ (a_j, b_j) ~|~ 1 \leq j \leq i,\, a_j + b_j = \ell \}
	\end{equation}
	consisting of $m$ points where the ordering of $(a_j, b_j)$'s is given in ~\eqref{equation_labeling}. That is, the set $\mcal{I}_\ell$ is equal to 
	\[
		\{ (a_{i-m+1}, b_{i-m+1}), (a_{i-m+2}, b_{i-m+2}), \cdots, (a_i, b_i)\}.
	\]
	
	Now, we consider the filling of $\gamma_f$ with $L$-blocks. 
	Then the $L$-blocks in the filling, whose right vertex of its top edge is located at the $\ell$-th anti-diagonal, 
	correspond to odd dimensional sphere factors of the fiber $F_\ell$ of the projection $\pi_\ell \colon E_\ell \rightarrow E_{\ell-1}$. 
 	We denote by $L_{t_1}, \cdots, L_{t_r}$ the $L$-blocks in the filling of $\gamma_f$ each of which is an $L$-block of size $t_i$ and is located 
 	at $(s_j, l + 1 - s_j - t_j)$ for some $s_1 < \cdots < s_r$.
 	
 	By Corollary 6.10 in \cite{CKO1}, we know that $F_\ell$ is diffeomorphic to 
	\begin{equation}\label{equation_solution}
		F_\ell \cong \{ (z_1, \cdots, z_{\ell-1}) \in \C^{\ell-1} ~|~ \text{$|z_{s_j}|^2 + \cdots + |z_{s_j+t_j-1}|^2 = C_{s_j}$ for some $C_{s_j}>0$} \quad j=1,\cdots, r\}.
	\end{equation}
	In particular, each $(a_j, b_j)$ indicates a location of an $L_1$-block in the filling of $\gamma_f$ and hence we have 
	\[
		\{a_{i-m+1}, a_{i-m+2}, \cdots, a_i \} \subset \{s_1, \cdots, s_r\}.
	\]
	The $S^1$-action on the fiber $F_l$ of $\pi_l$ generated by $\Phi_\lambda^{a_i, b_i}$ is expressed by
	\[
		t \cdot (z_1,\cdots, z_{l-1}) = (z_1, \cdots, z_{a_i-1}, tz_{a_i}, z_{a_i+1}, \cdots, z_{l-1}).
	\]
	Note that the action extends to the total space $E_n = \Phi_\lambda^{-1}(\textbf{\textup{u}})$ such that the projection map $\pi_q$ ($q=2,\cdots,n$) on each stage is $S^1$-equivariant. 
	Moreover, since the action on $E_l$ is fiberwise, the induced action on $E_{l'}$ for $l' < l$ is trivial. 
	Furthermore, as the action is free on $E_l$, so is free on $E_n$.  
	
	Let $\tau_i$ be any free $S^1$-orbit in $\Phi^{-1}_\lambda(\textbf{\textup{u}})$ for the $S^1$-action generated by $\Phi_\lambda^{a_i, b_i}$.
	Note that the homotopy class of $\tau_i$ does not depend on the choice of orbits. 
	By~\eqref{equ_circlefacctt} and Proposition~\ref{theorem_homotopygroup}, 
	$\Phi^{-1}_\lambda(\textbf{\textup{u}}) \cong (S^1)^k \times Y_f$ for some simply connected manifold $Y_f$ and 
	its fundamental group $\pi_1(\Phi^{-1}_\lambda(\textbf{\textup{u}})) \cong \Z^k$ 
	is generated by $\{\tau_1, \cdots, \tau_k \}$.
	Identifying  $\tau_i$ with $(\underbrace{0,\cdots,0}_{i-1}, 1, 0, \cdots, 0) \in \Z^k$, in order to confirm that the fundamenal group generated by $\sigma_i$'s,
	it is enough to show that 
	\[
		\sigma_i = (\underbrace{*, \cdots, *}_{i-1}, 1, 0, \cdots, 0) \in \Z^k \cong \pi_1(\Phi^{-1}_\lambda(\textbf{\textup{u}})), \quad i=1,\cdots,k
	\] 
	where $\sigma_i$ is a free $S^1$-orbit of the $S^1$-action generated by the partial trace $\Psi_\lambda^{a_i, b_i}$. 
		
	The partial trace $\Psi_\lambda^{a_i, b_i}$ in~\eqref{equ_pselambda} can be grouped into two parts
	\begin{align*}
		\Psi_\lambda^{a_i, b_i} &= \sum_{j=1}^{a_i} \Phi_\lambda^{j,\ell - j} = \sum_{(j,\ell - j) \in \mcal{I}_\ell} \Phi_\lambda^{j,\ell - j}  + \sum_{(j,\ell - j) \notin \mcal{I}_\ell} \Phi_\lambda^{j,\ell - j} \\
						&=  \left( \sum_{j=1}^m \Phi_\lambda^{a_{i-j+1}, b_{i-j+1}} \right) + \sum_{(j,\ell - j) \notin \mcal{I}_\ell} \Phi_\lambda^{j,\ell - j} 
	\end{align*}
	where the second summation is over all $j$ such that $1 \leq j \leq a_i$ and $(j, \ell - j) \notin \mcal{I}_\ell$.	
	Note the first summand $\left( \sum_{j=1}^m \Phi_\lambda^{a_{i-j+1}, b_{i-j+1}} \right)$ generates the homotopy class 
	$\sum_{j=1}^m \tau_{i-j+1}$. 
	Moreover, the second summand generates an $S^1$-action, which is trivial on $E_l$ 
	(since the portion of $F_l$ on which the $S^1$ acts in \eqref{equation_solution} is a point)
	but may be extended to a non-trivial action on the total space $E_n$. Consequently, 
	we have 
	\[
		\sigma_i = (\underbrace{*, \cdots, *}_{\ell}, \underbrace{1, \cdots, 1}_{j}, 0, \cdots, 0) \in \Z^k \cong \pi_1(\Phi_\lambda^{-1}(\textbf{\textup{u}})).
	\]
	This completes the proof.
\end{proof}

Finally, we prove our main theorem.

\begin{proof}[Proof of Theorem~\ref{theorem_main}]
	Assume that a Lagrangian GC fiber $\Phi_\lambda^{-1}(\textbf{\textup{u}})$ does not contain any circle factors. Then,~\eqref{equ_circlefacctt} and Proposition~\ref{theorem_homotopygroup} imply that $\pi_1 (\Phi_\lambda^{-1}(\textbf{\textup{u}})) = 0$. Thus, the fiber is monotone since $(\mcal{O}_\lambda, \omega_\lambda)$ is monotone. 
	
	Now consider the case where a Lagrangian GC fiber has circle factors. 
	Let $f$ be a Lagrangian face of dimension $k (>0)$ and $\gamma_f$ the face of $\Gamma_\lambda$ corresponding to $f$. 
	Note that $\dim f$ is the number of bounded regions in $\gamma_f$
	and we denote by $(a_1, b_1), \cdots, (a_k, b_k)$ the upper rightmost vertices of the bounded regions of $\gamma_f$ 	
	respecting the ordering given in \eqref{equation_labeling}. 
	
	We denote by $S^1(a_i, b_i)$ the circle group generated by $\Psi_\lambda^{a_i, b_i}$
	for $i=1,\cdots,k$.
	Let $\textbf{\textup{u}} = (u_{i,j}) \in \mathring{f}$. 
	For the sake of convenience, we denote $\Phi_\lambda^{-1}(\textbf{\textup{u}})$ by $L_\textbf{\textup{u}}$.
	Fix $(a_i,b_i) \in \mcal{I}_f$ where $\mcal{I}_f$ is in~\eqref{equ_doubleindices}.
	By Lemma \ref{lemma_free_orbit} and Lemma \ref{lemma_fundamental_group},
	there exists a free $S^1(a_i,b_i)$-orbit $\sigma_i$ such that 
	$\{\sigma_1, \cdots, \sigma_k\}$ forms a basis of $\pi_1(L_\textbf{\textup{u}}) \cong \Z^k$. 
	
	To construct a disc bounded by $\sigma_i$, we first choose an $\omega_\lambda$-compatible $S^1(a_i, b_i)$-invariant almost complex structure $J$ on $\mcal{U}_{\lambda,-}^{a_i,b_i}$.
	Let $c_i := \Psi_\lambda^{a_i, b_i}(Z_\lambda^{a_i, b_i})$ be the maximum of $\Psi_\lambda^{a_i, b_i}$.
	With respect to the metric $\omega_\lambda(J\cdot, \cdot)$, the gradient flow of $\Psi_\lambda^{a_i, b_i}$ is defined on $\mcal{U}_{\lambda, -}^{a_i, b_i}$. 
	Let $W_i^{{{u}}}$ be the set of unstable points converging to $Z_\lambda^{a_i, b_i}$. 
	
	We then consider the reduced space $(R_{\textbf{\textup{u}}(i)}, \omega_{\textbf{\textup{u}}(i)})$ at level $\textbf{\textup{u}}(i)$ where 
	\[
		R_{\textbf{\textup{u}}(i)} := 
		\left. \left(\Psi_\lambda^{a_i, b_i}\right)^{-1}(\textbf{\textup{u}}(i)) \right/ S^1(a_i, b_i), \quad 	\textbf{\textup{u}}(i) := \Psi^{a_i,b_i}_\lambda(\textbf{\textup{u}}) = u_{1, a_i + b_i - 1} + \cdots + u_{a_i, b_i}
	\]  equipped with the induced symplectic form which we denote by $\omega_{\textbf{\textup{u}}(i)}$. Since $\sigma_i$ is a free orbit, $[\sigma_i]$ is a smooth point 
	in $R_{\textbf{\textup{u}}(i)}$ so that there exists 
	a smooth Darboux neighborhood $U \subset R_{\textbf{\textup{u}}(i)}$ of $[\sigma_i]$. 
	Moreover, we may choose an element
	$[\sigma_i'] \in U \cap \left( W_i^{{u}} / S^1 \right)$ sufficiently close to $[\sigma_i]$ such that 
	\begin{itemize}
		\item there is a smooth path $\gamma$ in $U$ from $[\sigma_i]$ to $[\sigma_i']$, and 
		\item there is a symplectic isotopy $\{\phi_t\}_{0 \leq t \leq 1}$ on $(R_{\textbf{\textup{u}}(i)}, \omega_{\textbf{\textup{u}}(i)})$ 
		such that the support is in $U$, $\phi_0 = \mathrm{id}$, and 
		$\phi_1$ sends $[\sigma_i]$ to $[\sigma_i']$ 
	\end{itemize} 
	(Otherwise, we can find an open neighborhood $U'$ of $[\sigma_i]$ in $U$ such that $U' \cap \left( W_i^{{u}} / S^1 \right) = \emptyset$, which implies that 
	the preimage $\widetilde{U}'$ of $U'$ under the quotient map, which is open in $H^{-1}({\textbf {\textup u}}(i))$, does not intersect $W_i^{{u}}$. Then there is an open 
	neighborhood of $\widetilde{U}'$ in $M$ obtained by flowing $\widetilde{U}'$ along the gradient vector field of $H$ and does not intersection $W_i^{{u}}$. That leads to a contradiction that 
	$W_i^{{u}}$ is open dense.)
	If we denote by $\bar{L}_{\textbf{\textup{u}}}$ the quotient image of $L_{\textbf{\textup{u}}}$ in $R_{\textbf{\textup{u}}(i)}$, then $\phi_1$ maps 
	$\bar{L}_{\textbf{\textup{u}}}$ to some Lagrangian submanifold $\bar{L}_{\textbf{\textup{u}}}'$ containing $[\sigma_i']$. 
	Moreover, $\{\phi_t \}_{0 \leq t \leq 1}$ lifts to a Lagrangian isotopy 
	from a neighborhood of $\sigma_i$ in $L_{\textbf{\textup{u}}}$ to a neighborhood of $\sigma_i'$ in $L_{\textbf{\textup{u}}}'$, the preimage of $\bar{L}_{\textbf{\textup{u}}}'$ 
	under the quotient map.
	
	Finally, consider a disc $u_{\sigma_i} \colon (\mathbb{D}, \pa \mathbb{D}) \rightarrow (\mcal{O}_\lambda, {L}_{\textbf{\textup{u}}})$ obtained by 
	gluing
	\begin{itemize}
		\item  the gradient $J$-holomorphic disc $u^J_{\sigma_i'} \colon (\mathbb{D}, \pa \mathbb{D}) \rightarrow (\mcal{O}_\lambda, {L}_{\textbf{\textup{u}}}^\prime)$ and 
		\item the cylinder $q^{-1}(\gamma)$ where 
	$
		q \colon \left(\Phi_\lambda^{a_i, b_i} \right)^{-1}(\textbf{\textup{u}}(i)) \rightarrow R(\textbf{\textup{u}}(i))
	$
	is the quotient map. 
	\end{itemize}
	
	It remains to show that $\mu([u_{\sigma_i}]) = 2 \int_\mathbb{D} (u_{\sigma_i})^* \omega_\lambda$ for every $i=1,\cdots, k$ if and only if ${\textbf {\textup{u}}}$ is the center of $f$. 
	Remind that $c_1(\mcal{O}_\lambda) = [\omega_\lambda]$ where $\lambda$ is given in~\eqref{equation_lambda_monotone}. 
	We compute the symplectic area and the Maslov index of $u_{\sigma_i}$ as follows.
	\begin{itemize}
		\item (Symplectic area) 
		\[
		\int_\mathbb{D} (u_{\sigma_i})^* \omega_\lambda = \int_\mathbb{D} (u_{\sigma_i'})^* \omega_\lambda = c_i - \textbf{\textup{u}}(i) = c_i -  \Psi^{a_i,b_i}_\lambda(\textbf{\textup{u}})
		\]
		since the symplectic area of the cylinder $q^{-1}(\gamma)$ is zero.
		\item (Maslov index) By Lemma \ref{lemma_isotopy}, Corollary \ref{corollary_Maslov_index_formula}, and Lemma  \ref{lemma_codim}, 
		\[	
			\mu([u_{\sigma_i}]) = \mu([u_{\sigma_i'}]) = \mathrm{codim} ~Z_\lambda^{a_i,b_i}= 2 ~\left(c_i - \Psi^{a_i,b_i}_\lambda(\textbf{\textup{u}}_{\Delta_\lambda})\right).
		\] 
	\end{itemize}
%	Observe that the ladder diagram $\Gamma_\lambda$ is cut into several regions $\{ \mcal{R}_1, \cdots, \mcal{R}_\kappa \}$ along $\gamma_f$. 
%	Because of our choice of $(a_i,b_i)$, for each $j$ ($1 \leq j \leq \kappa$),
%	$\left\{ \square^{(s,t)} ~|~ 1 \leq s \leq a_i, \, a_i + b_i = s + t \right\} \cap \mcal{R}_j$
%	is empty or coincides with $\left\{ \square^{(s,t)} ~|~ a_i + b_i = s + t \right\} \cap \mcal{R}_j$.
	By Lemma~\ref{lemma_charpartialtraces}, a point $\textbf{u}$ is the center of $f$ if and only if
	$\Psi^{a_i,b_i}_\lambda(\textbf{\textup{u}}_{\Delta_\lambda}) = \Psi^{a_i,b_i}_\lambda (\textbf{\textup{u}})$
	for $i = 1, \cdots, k$. In this case, we have 
	\[
		\mu([u_{\sigma_i}]) =  2 ~\left(c_i - \Psi^{a_i,b_i}_\lambda(\textbf{\textup{u}}_{\Delta_\lambda})\right) = 2 \left(  c_i -  \Psi^{a_i,b_i}_\lambda(\textbf{\textup{u}}) \right) = 
		2 \int_\mathbb{D} (u_{\sigma_i})^* \omega_\lambda
	\] 
	for every $i=1,\cdots, k$. It completes the proof. 
\end{proof}

%----------------------------------------------------
\bibliographystyle{annotation}

\end{document}